\numberwithin{equation}{section}
\theoremstyle{definition}
\newtheorem{teo}{Theorem}[section]
\newtheorem{ateo}{Theorem}
\newtheorem{lema}{Lemma}[section]
\newtheorem{prop}{Proposition}[section]
\newtheorem{deft}{Definition}
\newtheorem{cor}{Corollary}[section]
\DeclareMathOperator\PSL{PSL}
\DeclareMathOperator\Cov{Cov}
\DeclareMathOperator\Corr{Corr}
\DeclareMathOperator\Pol{Pol}
\DeclareMathOperator\Area{Area}
\DeclareMathOperator\Leb{Leb}
\DeclareMathOperator\diff{d}
\DeclareMathOperator\Jac{Jac}
\DeclareMathOperator\Diag{Diag}
\newcommand{\A}{\mathbb{A}}
\newcommand{\C}{\mathbb{C}}
\newcommand{\D}{\mathbb{D}}
\newcommand{\s}{\mathbb{S}}
\newcommand{\Z}{\mathbb{Z}}
\title{Continuity of matings of Kleinian groups and polynomials}
\author{Miguel Ratis Laude}
\begin{document}

\maketitle

\begin{abstract}
    In recent years, the study of holomorphic correspondences as dynamical systems that can display behaviors of both rational maps and Kleinian groups has gained a good amount of attention. This phenomenon is related to the Sullivan dictionary, a list of parallels between the theories of these two systems. We build upon a surgical construction of such matings, due to Bullett and Harvey, increasing the degree of maps we consider, and proving regularity properties of the mating map on parameter spaces: namely, analyticity on the interior of its domain of definition, and continuity under quasiconformal rigidity on the boundary.
\end{abstract}

\section{Introduction}

The theory of rational maps and (finitely generated) Kleinian groups have strong parallels, observed since Fatou and more formally exposed by Sullivan in \cite{sullivan-1985}. In \cite{bullett-penrose-1994}, Bullett and Penrose showed that \textit{correspondences}, a kind of multivalued map, could display dynamical behavior similar to both kinds of objects at the same time. Specifically, they managed to show that certain correspondences behaved similarly to quadratic polynomials on a certain limit set, and to representations in $\PSL(2, \C)$ of the modular group $\PSL(2, \Z)$ on the complement of that set. In \cite{bullett-harvey-2000} and \cite{bullett-lomonaco-2019, bullett-lomonaco-2022, bullett-lomonaco-2024}, this has been more concretely proven: the first paper shows that, for any quadratic polynomial with connected Julia set, and any discrete faithful representation of the modular group with connected regular set, there exists some holomorphic correspondence that is the mating between them; and in the other papers, matings of the actual modular group with a certain family of rational maps displaying a persistent parabolic fixed point have been shown to exist. We remark that matings with other groups have also been investigated in \cite{mj-mukherjee-2023, mj-mukherjee-2024}, with certain restrictions on the polynomials, and on a different vain antiholomorphic matings have been studied in \cite{lee-lyubich-makarov-mukherjee-2018, lee-lyubich-makarov-mukherjee-2022}.

In this article, we investigate the \textit{Bullett-Harvey surgery}, the process described in \cite{bullett-harvey-2000}. We first describe an extension of it to general degree, and then apply Douady-Hubbard techniques to understand how the resulting correspondence behaves as we allow the polynomial and the group to vary.

The modular group, which is isomorphic (as a group) to $C_2\ast C_3$ - the free product between the cyclic groups of orders $2$ and $3$, was used as a basis for matings with quadratic polynomials. To increase the degree, we will need to deal with the Hecke groups $H_{d+1} \subset \PSL(2, \C)$. These are isomorphic to $C_2\ast C_{d+1}$ as groups, and in particular $H_3$ is the modular group. Let $\mathcal{D}_{d+1}$ be the space of all discrete representations of $H_{d+1}$ modulo conformal conjugacies. We will speak more on the structure of this space later, but we remark that the interior of $\mathcal{D}_{d+1}$ is comprised of (the conformal conjugacy class of) the representations that display a connected regular set. Our first result is then a generalization of the Bullett-Harvey surgery.

\begin{ateo}\label{main.surgery}

    Let $f$ be a polynomial of degree $d$ with connected filled Julia set, and $r \in \overset{\circ}{\mathcal{D}}_{d+1}$ be a faithful discrete representation of the Hecke group $H_{d+1}$ with connected regular set. Then there exists a holomorphic correspondence $F$ that is a mating between $f$ and $r(H_{d+1})$.
    
\end{ateo}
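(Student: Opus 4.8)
The plan is to reduce the construction to a quasiconformal surgery in the style of Douady and Hubbard, extending the rigid algebraic picture underlying the Bullett--Penrose correspondences from the quadratic case to degree $d$. The starting observation is that the quadratic/modular correspondence can be written as $\{(z,w):\mathcal{P}(w)=\mathcal{P}(z)\}$ with the diagonal $\{w=z\}$ removed, where $\mathcal{P}$ is a degree-$3$ (Chebyshev-type) map encoding the order-$3$ generator. The natural degree-$d$ analogue is a correspondence of bidegree $(d,d)$ cut out by an equation $\mathcal{P}_r(w)=\mathcal{P}_r(z)$ with $\mathcal{P}_r$ of degree $d+1$, whose covering structure realizes the cyclic factor $C_{d+1}$ and whose tautological swap $z\leftrightarrow w$ realizes the involution $C_2$; this accounts for the pairing of the polynomial degree $d$ with the Hecke group $H_{d+1}=C_2\ast C_{d+1}$. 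I would build $F$ so that on one completely invariant region it is conformally conjugate to the action of $r(H_{d+1})$, while on the complementary region its grand orbits reproduce the dynamics of $f$ on its filled Julia set $K(f)$.

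Concretely, I would first assemble two models. On the group side, since $r$ lies in $\overset{\circ}{\mathcal{D}}_{d+1}$ it is quasiconformally conjugate to a fixed geometrically finite base representation $r_0$ with connected regular set; normalizing so that $r_0(\rho)$ is the rotation $z\mapsto e^{2\pi i/(d+1)}z$ and $r_0(\sigma)$ a standard involution yields a fundamental domain bounded by circular arcs together with an explicit quotient map of degree $d+1$. On the polynomial side, connectivity of $K(f)$ means all finite critical points lie in $K(f)$, so the B\"ottcher map $\phi$ extends to a conformal isomorphism $\hat{\C}\setminus K(f)\to\hat{\C}\setminus\overline{\D}$ conjugating $f$ to $w\mapsto w^{d}$. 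These two models are combinatorially compatible along their ideal boundaries: the $d$-fold covering $w\mapsto w^{d}$ matches the cyclic boundary identifications produced by $\rho$.

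Next I would glue. Partition the sphere into a group region $\Omega$ and a polynomial region $P$ meeting along a Jordan curve $\gamma$ (the analogue of the deltoid boundary), and define a quasiregular correspondence $F_0$ that acts as the deck transformations generating $r(H_{d+1})$ on $\Omega$ and as the degree-$d$ polynomial-like dynamics of $f$ on $P$, interpolated by a quasiconformal map across an annular collar of $\gamma$. Pulling back the standard complex structure on $\Omega$ and the B\"ottcher structure on $P$, and spreading them by the iteration of $F_0$, produces an $F_0$-invariant Beltrami coefficient $\mu$; the Measurable Riemann Mapping Theorem then furnishes a quasiconformal $\psi$ solving the Beltrami equation for $\mu$, and $F:=\psi\circ F_0\circ\psi^{-1}$ is a holomorphic correspondence, conformally conjugate to $f$ on the $K(f)$-side and generating $r(H_{d+1})$ on the $\Omega$-side, i.e.\ the desired mating.

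The main obstacle is the gluing step: producing a genuinely consistent quasiregular correspondence whose two regimes match along $\gamma$ with the correct degree and branching data, and verifying that the resulting invariant structure satisfies $\|\mu\|_\infty<1$. This reduces to showing that the surgery is supported on a region of bounded geometry, so that dilatation does not accumulate under the grand-orbit spreading --- a point that is delicate precisely because the $d$-fold polynomial covering must be reconciled with the order-$(d+1)$ elliptic behavior of $\rho$, a reconciliation that is transparent in the quadratic case treated by Bullett--Harvey but requires genuine care in general degree.
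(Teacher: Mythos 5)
Your overall strategy --- quasiconformal surgery gluing a polynomial model to a group model, spreading the complex structure along grand orbits, and straightening by the Measurable Riemann Mapping Theorem --- is indeed the paper's approach (and Bullett--Harvey's), but the decisive structural step is missing, and as written your gluing does not produce a correspondence at all. A branched-covering correspondence on $\overline{\C}$ has a globally constant bidegree, so there is no object that acts as the $d:1$ map $f$ on a region $P$ and as ``the deck transformations generating $r(H_{d+1})$'' on the complementary region: over a point of $P$ you have supplied only one forward branch, while over $\Omega$ a union of generator graphs has a different (and incompatible) bidegree, so no $d:d$ correspondence restricts in that way. The paper resolves this by building the model sphere as a \emph{double}: a disk $D$, containing the image of the polynomial's fundamental annulus $A = U\setminus\overline{U'}$ as an annulus $B$ with $\partial_o B = \partial D$, is glued along $\partial D$ via the involution $\sigma_r$ to a mirror copy $\tilde{D}$. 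The quasiregular model $G$ then consists of four families of branches: the conjugated polynomial $D'\to D$ ($d:1$); its reflected inverse $\tilde{D} \to \tilde{D}'$ ($1:d$); the branches $\sigma_r\circ\Cov_0^{h\circ f\circ h^{-1}}: D'\to\tilde{D}'$ ($d-1:d-1$), which are exactly what is needed to bring the count over $D'$ up to $d$ images; and $\sigma_r\circ g$ on $B$ ($d:d$). In particular the limit set of the mating consists of \emph{two} copies of $K_f$, one on which $F$ is hybrid conjugate to $f$ and one to $f^{-1}$ --- this is part of the definition of mating being proved, and your single polynomial region cannot satisfy it. Likewise the ``combinatorial compatibility'' you assert along ideal boundaries has real content that must be constructed: from a fundamental domain $\Delta'$ of $r(H_{d+1})$ one forms $\Delta = \Delta'\cup\rho_r(\Delta')\cup\dots\cup\rho_r^d(\Delta')$ and checks that identifying specific sides (under $\chi_r$ and $\rho_r^{2j-1}\chi_r$) turns $\Delta$ into an annulus $B$ whose inner boundary maps to its outer boundary by a $d:1$ piecewise-smooth cover $g$, carrying an orientation-reversing involution of $\partial_o B$; only then can the polynomial's annulus, where $f:\partial_i A\to\partial_o A$ is a $d:1$ cover and the B\"{o}ttcher involution $j$ acts on $\partial_o A$, be transplanted by a quasiconformal $h: A \to B$ conjugating boundary dynamics.

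Second, you place the difficulty in the wrong spot: the bound $\|\mu\|_\infty<1$ is immediate and needs no bounded-geometry argument. The invariant coefficient is defined by pulling back $\mu_0$ under maps of the form $h\circ f^n\circ h^{-1}$, and since $f^n$ is holomorphic the dilatation never compounds: it is controlled by the distortion of the single fixed $K$-quasiconformal map $h$ applied twice, giving $|\mu|\le (K^2-1)/(K^2+1)$, with only the smooth involution $\sigma_r$ contributing additionally on the mirror side. Each grand orbit meets the surgery region essentially once --- this is the standard surgery principle. The genuine work of the theorem is the combinatorial assembly described above, not the dilatation estimate.
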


The precise notion of mating will be defined soon. The techniques used here are similar to the ones used by Bullett and Harvey. The main difference in the construction lies in certain careful choices that make the proofs of the other results easier. We remark that matings with Hecke groups were already looked at in \cite{bullett-freiberger-2003, bullett-freiberger-2005}, and the more recent \cite{bullett-lomonaco-lyubich-mukherjee-2024}, where different surgical processes are presented.

The construction will yield us a mating $F$ between a polynomial and a representation of $H_{d+1}$, which will be a $d:d$ correspondence of the form $J\circ \Cov_0^q$, where $J$ is an involution, $q$ is a degree $d + 1$ polynomial, and $\Cov_0^q$ is the associated \textit{deleted covering correspondence}, defined by mapping a point to all of the other points with the same image under $q$ (see Section 2.3 for the precise definitions). We will denote by $\mathcal{C}_d$ the space of conformal conjugacy classes of such correspondences. It will be obvious from the definition that any correspondence conformal equivalent to a mating is also a mating between the same polynomial and representation.

If now we take polynomials in an analytic family, say $\mathbf{f} = (f_\lambda)_{\lambda\in \Lambda}$, $\Lambda$ a complex manifold, and denote $M_{\mathbf{f}}$ the set of parameters $\lambda$ for which the filled Julia set $K_{f_\lambda} =: K_\lambda$ is connected, the mating process is shown to behave very well in the interior of $M_\mathbf{f}$.

\begin{ateo}\label{main.analyticity}

    Let $\mathbf{f} = (f_\lambda)_{\lambda\in \Lambda}$ be an analytic family of polynomials of the same degree $d$, $\Lambda$ a complex manifold. The map $M_\mathbf{f}\times \overset{\circ}{\mathcal{D}}_{d+1} \to \mathcal{C}_d$ that sends the pair $(\lambda, r)$ to the class of matings between $f_\lambda$ and $r(H_{d+1})$ is analytic on the set $\overset{\circ}{M}_\mathbf{f}\times \overset{\circ}{\mathcal{D}}$.

\end{ateo}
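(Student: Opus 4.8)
The plan is to recast the construction of Theorem~\ref{main.surgery} as a parametrized quasiconformal surgery and invoke the measurable Riemann mapping theorem with holomorphic dependence on parameters (Ahlfors--Bers). Since a correspondence of the form $J\circ\Cov_0^q$ is determined up to conformal conjugacy by finitely many data --- the coefficients of the degree $d+1$ polynomial $q$ together with the parameters of the M\"obius involution $J$ --- the space $\mathcal{C}_d$ is a finite-dimensional complex manifold, and it suffices to prove that, in a local holomorphic chart, these data depend analytically on $(\lambda,r)$. Fixing a base point $(\lambda_0,r_0)\in\overset{\circ}{M}_\mathbf{f}\times\overset{\circ}{\mathcal{D}}_{d+1}$, I would produce a family of integrating maps $\phi_{\lambda,r}$ depending holomorphically on the parameters over a neighborhood, and show that the defining data of the mating are read off from $\phi_{\lambda,r}$ by holomorphic operations.

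The core step is to exhibit, for each $(\lambda,r)$ near $(\lambda_0,r_0)$, the Beltrami form $\mu_{\lambda,r}$ on $\hat\C$ whose integration yields the mating, and to check that $(\lambda,r)\mapsto\mu_{\lambda,r}\in L^\infty(\hat\C)$ is holomorphic with $\|\mu_{\lambda,r}\|_\infty\le k<1$ locally uniformly. On the polynomial side, over $\overset{\circ}{M}_\mathbf{f}$ the filled Julia set $K_\lambda$ and the B\"ottcher coordinate $\beta_\lambda$ conjugating $f_\lambda$ to $z\mapsto z^d$ near $\infty$ move holomorphically in $\lambda$; the piece of $\mu_{\lambda,r}$ coming from the polynomial dynamics is pulled back through $\beta_\lambda$ and therefore varies holomorphically. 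On the group side, over $\overset{\circ}{\mathcal{D}}_{d+1}$ the representation $r$ and the associated quasiconformal deformation of $r_0(H_{d+1})$ depend holomorphically by the Ahlfors--Bers deformation theory of Kleinian groups, so the piece of $\mu_{\lambda,r}$ supported on the regular set varies holomorphically as well. The two pieces are joined across the interpolating region of the surgery, and I would check that the interpolation can be arranged so that the glued coefficient is holomorphic in $(\lambda,r)$ and keeps its dilatation bounded away from $1$.

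With $\mu_{\lambda,r}$ holomorphic in parameters, the Ahlfors--Bers theorem gives a normalized solution $\phi_{\lambda,r}$ of $\bar\partial\phi=\mu_{\lambda,r}\,\partial\phi$ --- normalized by fixing the marked points dictated by the surgery --- that depends holomorphically on $(\lambda,r)$, both pointwise and as a map into the relevant function space. Conjugating the quasiregular model correspondence by $\phi_{\lambda,r}$ produces $F_{\lambda,r}=J_{\lambda,r}\circ\Cov_0^{q_{\lambda,r}}$; the coefficients of $q_{\lambda,r}$ and the parameters of $J_{\lambda,r}$ are obtained from $\phi_{\lambda,r}$ by evaluating it and finitely many of its derivatives at the marked points (equivalently by contour integrals), all of which are holomorphic in $(\lambda,r)$. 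Passing to the conformal-conjugacy quotient $\mathcal{C}_d$ then yields analyticity of the mating map in a local chart, and hence on all of $\overset{\circ}{M}_\mathbf{f}\times\overset{\circ}{\mathcal{D}}$.

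The main obstacle I anticipate is the gluing step: one must arrange the marked points and the interpolation in the fundamental region --- the positions of the relevant fixed points, the boundary of the B\"ottcher domain, and a fundamental domain for $r(H_{d+1})$ --- to depend holomorphically on $(\lambda,r)$, so that the polynomial and group pieces of $\mu_{\lambda,r}$ fit together into a genuinely holomorphic family with uniformly sub-unit dilatation. Controlling this interpolation holomorphically, rather than merely continuously, is precisely what upgrades the surgery to an analytic dependence, and is where the careful choices emphasized in the construction of Theorem~\ref{main.surgery} should pay off.
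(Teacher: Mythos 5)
Your plan rests on one claim that you assert but never establish, and it is precisely the hard point of the theorem: that the glued Beltrami form $\mu_{\lambda,r}$ produced by the surgery can be taken to depend holomorphically on $(\lambda,r)$. Holomorphic dependence survives pulling back $\mu_0$ under holomorphic motions, but the surgery forces two other operations on you. On the copy of the filled Julia set --- which has positive measure for typical $\lambda\in\overset{\circ}{M}_\mathbf{f}$, e.g.\ whenever $f_\lambda$ has an attracting cycle --- the coefficient must be the \emph{pushforward} $(h_\lambda)_\ast\mu_0$ of the trivial form under the gluing map, since this is exactly what makes the straightened restriction hybrid (not merely quasiconformally) conjugate to $f_\lambda$; and the dynamically spread pieces on the escaping region have the form $(h_\lambda\circ f_\lambda^n\circ h_\lambda^{-1})^\ast\mu_r$, involving the \emph{inverse} of the parameter-dependent interpolation. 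Pointwise holomorphy of $\lambda\mapsto h_\lambda(z)$ gives neither holomorphy of the inverse family nor holomorphy of the complex dilatation of $h_\lambda$, and for a Slodkowski-type extension over $K_\lambda$ (the only general tool for producing $h_\lambda$ there) one has no control at all on how the dilatation varies. This can be repaired, but only with genuinely additional input that your outline does not contain: take the motion on the escaping region to be the dynamical B\"ottcher motion $\varphi_\lambda^{-1}\circ\varphi_{\lambda_0}$, so that the pieces $(h_\lambda\circ f_\lambda^n\circ h_\lambda^{-1})^\ast\mu_r$ become independent of $\lambda$; use an extension over $K_{\lambda_0}$ whose dilatation provably varies holomorphically (Bers--Royden harmonic extensions, or a nontrivial theorem on dilatations of holomorphic motions of $\overline{\C}$); and, even then, since the model correspondence $G_\lambda$ itself does not vary holomorphically over the filled Julia set, recover $J_{\lambda,r}$ and $q_{\lambda,r}$ by algebraicity from the restriction of $\phi_{\lambda,r}\circ G_\lambda\circ\phi_{\lambda,r}^{-1}$ to the region where $G_\lambda$ is static. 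As written, your ``core step'' is the statement to be proved; it is telling that the paper's own parametrized surgery (proof of Theorem \ref{teo.continuity101}) extracts only $L^1$-continuity of $\mu_{\lambda,r}$, which suffices for continuity of the mating map but not for analyticity.

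The paper proves Theorem \ref{main.analyticity} by a mechanism that never requires holomorphic dependence of the surgery data. It works in the space $\Corr_d^J$ around a fixed mating $F_0$: Lemma \ref{lema.poly_like_stability} gives persistent polynomial-like restrictions; Lemma \ref{lema.group_structure_stability} produces holomorphic motions of the fundamental annulus conjugating the $d:d$ actions, so nearby correspondences retain the group structure (only \emph{pullbacks} of $\mu_0$ under motions appear here, which do vary holomorphically); Lemma \ref{lema.J_stable_manifold} (McMullen--Sullivan) exhibits the quasiconformal class of $F_0$ as a suborbifold; and Lemma \ref{lema.mating_family_manifold}, together with the Douady--Hubbard analyticity of straightening over $J$-stable families, cuts out the subvariety of correspondences mating members of $\mathbf{f}$. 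Analyticity of the mating map is then obtained by showing its \emph{graph} $\Xi$ is an analytic set, via the two analytic read-off maps $\chi$ (which polynomial) and $\hat{r}$ (which representation), and the continuity of Theorem \ref{teo.continuity101} upgrades analyticity from a dense open subset $W'\subset W$ to all of $W$. In short, the paper inverts your arrow: instead of showing that the parameters determine the Beltrami data holomorphically, it shows that the correspondences determine the parameters holomorphically, and uses uniqueness of matings to conclude.
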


Although the techniques involved in proving Theorem \ref{main.analyticity} are quite classic, there is a very non-trivial work of identifying the correct space in which to apply them. Specifically, we find in $\mathcal{C}_d$ the submanifold passing along a given mating where we have $J$-stability (in a sense similar to Mañé-Sad-Sullivan \cite{mane-sad-sullivan-1983}). That is the step that allows us to follow similar arguments to Douady-Hubbard \cite{douady-hubbard-1985} and conclude the analyticity of the mating process. From the techniques used to prove Theorem \ref{main.analyticity}, it will also be clear that we can make a stronger claim about analyticity with respect to the group parameter, for any given polynomial:

\begin{cor}\label{cor.continuity_along_reps}

    Fix $f$ any polynomial of degree $d$ with connected filled Julia set. The map that sends $r \in \overset{\circ}{\mathcal{D}}_{d+1}$ to the equivalence class in $\mathcal{C}_d$ of matings between $f$ and $r(H_{d+1})$ is analytic.
    
\end{cor}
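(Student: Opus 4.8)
The plan is to derive Corollary \ref{cor.continuity_along_reps} directly from Theorem \ref{main.analyticity} by restricting the two-variable analytic map to a single slice. Concretely, given a fixed polynomial $f$ of degree $d$ with connected filled Julia set, I would exhibit $f$ as a member of a trivial one-point analytic family. Take $\Lambda = \{\ast\}$ a single point (or, if one prefers a genuine complex manifold, take $\Lambda$ to be any connected open subset of $\C$ and set $f_\lambda \equiv f$ for all $\lambda$, the constant family). For this family $\mathbf{f}$, every parameter has connected filled Julia set, so $M_\mathbf{f} = \Lambda$ and hence $\overset{\circ}{M}_\mathbf{f} = \Lambda$ is the whole manifold. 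Theorem \ref{main.analyticity} then applies and gives analyticity of the mating map $\Lambda \times \overset{\circ}{\mathcal{D}}_{d+1} \to \mathcal{C}_d$ on $\Lambda \times \overset{\circ}{\mathcal{D}}_{d+1}$.

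Next I would restrict to the slice $\{\lambda_0\} \times \overset{\circ}{\mathcal{D}}_{d+1}$ for a chosen basepoint $\lambda_0 \in \Lambda$. The inclusion $r \mapsto (\lambda_0, r)$ is an analytic embedding of $\overset{\circ}{\mathcal{D}}_{d+1}$ into the product, and the composition of an analytic map with an analytic embedding is analytic; this is the standard fact that the restriction of an analytic map to an analytic submanifold remains analytic. Since $f_{\lambda_0} = f$, the restricted map is precisely $r \mapsto [\,\text{matings between } f \text{ and } r(H_{d+1})\,]$, which is exactly the map in the statement of the corollary. This yields the claimed analyticity immediately.

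The only genuinely substantive point is a consistency check rather than a new argument: I must confirm that the mating class assigned to $(\lambda_0, r)$ by Theorem \ref{main.analyticity} agrees, as an element of $\mathcal{C}_d$, with the mating class the corollary attaches to $r$. This is immediate from the defining property that the value of the two-variable map at $(\lambda, r)$ is by construction the conformal conjugacy class of matings between $f_\lambda$ and $r(H_{d+1})$, so substituting $\lambda = \lambda_0$ gives the matings between $f$ and $r(H_{d+1})$. Because any correspondence conformally equivalent to a mating is itself a mating between the same data (as noted in the excerpt), the class is well defined and the identification is unambiguous.

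The main obstacle, such as it is, lies not in the deduction but in the phrasing of Theorem \ref{main.analyticity}: its hypothesis requires an analytic family $\mathbf{f} = (f_\lambda)_{\lambda \in \Lambda}$ with $\Lambda$ a complex manifold, and one should make sure the constant family qualifies as an analytic family in the precise sense used in the paper (joint analyticity of $(\lambda, z) \mapsto f_\lambda(z)$, which holds trivially for a constant family since it is independent of $\lambda$). Once this is granted, no further estimates are needed; the corollary is a formal specialization, and the proof amounts to recording the reduction above. I therefore expect this to be a short corollary proof that simply invokes Theorem \ref{main.analyticity} with the constant family and restricts to a slice.
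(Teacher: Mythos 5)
Your proposal is correct and matches the paper's own proof, which reads in its entirety: ``Simply consider $\Lambda = \{\lambda_0\}$ a singleton and $f_{\lambda_0} = f$.'' The extra care you take (offering the constant family over an open set as an alternative, checking that $M_\mathbf{f} = \overset{\circ}{M}_\mathbf{f} = \Lambda$, and verifying the slice restriction is consistent) only spells out details the paper leaves implicit.
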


\textbf{Remark}: This corollary parallels Theorem C of \cite{mj-mukherjee-2024}, where Mj and Mukherjee show that matings of the groups there considered with the polynomial $z^d$ vary holomorphically with the group representation, and lie in a specific slice of the space of correspondences, creating an analogue of a Bers slice. The result presented here talks of the analytic dependence with any fixed polynomial, but we do not identify precisely the slice within $\mathcal{C}_d$ where such matings belong.

Continuity on the boundary of $M_\mathbf{f}$ is subject to \textit{quasiconformally rigidity} of the parameter. A parameter $\lambda \in \Lambda$ is called quasiconformally rigid if every polynomial quasiconformally conjugate to it is actually conformally conjugate; we denote $\mathcal{T}$ the set of quasiconformally rigid parameters that are not in $\overset{\circ}{M}_\mathbf{f}$.

\begin{ateo}\label{main.continuity}

    Let $\mathbf{f} = (f_\lambda)_{\lambda\in \Lambda}$ be an analytic family of polynomials of the same degree $d$, $\Lambda$ a complex manifold. The map $M_\mathbf{f}\times \overset{\circ}{\mathcal{D}}_{d+1} \to \mathcal{C}_d$ that sends the pair $(\lambda, r)$ to the mating between $f_\lambda$ and $r(H_{d+1})$ is continuous on the set $(\overset{\circ}{M}_\mathbf{f}\cup \mathcal{T})\times \overset{\circ}{\mathcal{D}}$.
    
\end{ateo}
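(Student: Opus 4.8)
The plan is to prove continuity at a boundary parameter $\lambda_0 \in \mathcal{T}$ (continuity on the open set $\overset{\circ}{M}_\mathbf{f}\times \overset{\circ}{\mathcal{D}}$ is already delivered by Theorem~\ref{main.analyticity}) by realizing the mating correspondence as the image of an explicit quasiconformal straightening and then using rigidity to upgrade a weak limit to the genuine mating. Concretely, suppose $(\lambda_n, r_n) \to (\lambda_0, r_0)$ with $\lambda_0 \in \mathcal{T}$ and $r_0 \in \overset{\circ}{\mathcal{D}}_{d+1}$. By Theorem~\ref{main.analyticity} we may assume each $\lambda_n$ lies on the boundary $\partial M_\mathbf{f}$ (the interior case follows from analyticity plus a diagonal argument), and we must show the associated matings $F_n = J_n \circ \Cov_0^{q_n}$ converge in $\mathcal{C}_d$ to the mating $F_0$ between $f_{\lambda_0}$ and $r_0(H_{d+1})$.

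First I would record compactness: the defining data of a mating in $\mathcal{C}_d$ is a pair $(J, q)$ up to conformal conjugacy, where $q$ is a degree $d+1$ polynomial and $J$ an involution, normalized by fixing three points on the Riemann sphere. Since the filled Julia sets $K_{\lambda_n}$ are uniformly bounded (the family is analytic of fixed degree, so one may normalize the polynomials to a compact parameter region after passing to a subsequence), and the representations $r_n$ stay in a compact part of $\overset{\circ}{\mathcal{D}}_{d+1}$ near $r_0$, the normalized data $(J_n, q_n)$ lie in a compact family. Passing to a subsequence, $(J_n, q_n) \to (J_\infty, q_\infty)$, which defines a correspondence $F_\infty = J_\infty \circ \Cov_0^{q_\infty} \in \mathcal{C}_d$. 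The goal is to identify $F_\infty$ with $F_0$.

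The heart of the argument is the identification step, and this is where quasiconformal rigidity of $\lambda_0$ enters decisively. The Bullett--Harvey construction underlying Theorem~\ref{main.surgery} produces the mating by gluing the dynamics of $f_{\lambda_0}$ on (a copy of) its filled Julia set to the action of $r_0(H_{d+1})$ on its regular set along their respective boundary circles, via a quasiconformal interpolation in an annular collar. I would show that the limiting correspondence $F_\infty$ carries a polynomial-like restriction on the appropriate piece of its domain which is quasiconformally conjugate to $f_{\lambda_0}$: the uniform bounds on the dilatations of the surgery maps (coming from uniform moduli of the collar annuli as $n \to \infty$, since $r_0$ is interior and the Julia sets vary upper-semicontinuously) guarantee that the straightening maps have a convergent subsequence with controlled dilatation, and the limiting straightening exhibits the $f_\infty$-part of $F_\infty$ as quasiconformally conjugate to $f_{\lambda_0}$. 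Rigidity then forces this conjugacy to be conformal, so the polynomial mated in $F_\infty$ is conformally $f_{\lambda_0}$; the group side is handled directly since $r_n \to r_0$ in the interior, where the correspondence depends continuously (indeed analytically, by Corollary~\ref{cor.continuity_along_reps}) on $r$. Hence $F_\infty$ is the mating of $f_{\lambda_0}$ with $r_0(H_{d+1})$, i.e.\ $F_\infty = F_0$ in $\mathcal{C}_d$.

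Finally, because every subsequential limit equals the same $F_0$ and the ambient family is compact, the full sequence $F_n \to F_0$, giving continuity at $(\lambda_0, r_0)$. The main obstacle I anticipate is the uniform control of dilatation in the surgery as $\lambda_n \to \lambda_0$: one must ensure the moduli of the gluing annuli do not degenerate and that the Beltrami coefficients used in the construction converge in the right sense, so that the integrating quasiconformal maps converge (via the measurable Riemann mapping theorem with parameters) rather than merely having a convergent subsequence with a priori larger dilatation. Establishing this requires the careful bookkeeping built into the construction of Theorem~\ref{main.surgery} — precisely the ``careful choices'' the author emphasizes — and it is the step where quasiconformal rigidity of $\lambda_0$ is genuinely needed to rule out a limit that is merely quasiconformally, but not conformally, equivalent to the intended mating.
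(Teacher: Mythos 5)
Your overall strategy coincides with the paper's: produce the matings by the surgery of Theorem \ref{main.surgery}, obtain uniform dilatation bounds, extract a convergent subsequence of the straightening maps by normality of quasiconformal mappings, identify the limit as a holomorphic correspondence carrying a polynomial-like restriction quasiconformally conjugate to $f_{\lambda_0}$, and use quasiconformal rigidity of $\lambda_0$ together with continuity in $r$ to conclude that every subsequential limit is the mating of $f_{\lambda_0}$ with $r_0(H_{d+1})$. However, your opening reduction is invalid: analyticity (or continuity) on $\overset{\circ}{M}_\mathbf{f}\times \overset{\circ}{\mathcal{D}}$ controls limits at \emph{interior} points only, and says nothing about a sequence $\lambda_n \in \overset{\circ}{M}_\mathbf{f}$ converging to the boundary point $\lambda_0$ --- that is precisely the statement being proven, so ``analyticity plus a diagonal argument'' is circular. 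Fortunately the reduction is also unnecessary: nothing in your compactness/rigidity argument uses $\lambda_n \in \partial M_\mathbf{f}$, and the paper treats all sequences $(\lambda_n, r_n) \to (\lambda_0, r_0)$ in $M_\mathbf{f}\times \overset{\circ}{\mathcal{D}}$ at once; the correct fix is simply to delete that sentence.

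The genuine gap is the step you yourself flag as the ``main obstacle'': the uniform bound on dilatations is asserted but never established, and likewise the compactness of the normalized data $(J_n, q_n)$ in your second paragraph is unsupported (boundedness of the sets $K_{\lambda_n}$ does not prevent the coefficients of the degree $d+1$ polynomials $q_n$ from degenerating). The paper closes this gap in two lines, precisely because of how the surgery was set up: the Beltrami coefficients $\mu_r$ on the group annulus depend analytically on $r$, so $\| \mu_{r_n} \|_\infty$ is uniformly bounded as $r_n \to r_0 \in \overset{\circ}{\mathcal{D}}$; and the gluing maps $h_\lambda$ are a fixed quasiconformal map composed with a holomorphic motion, so they have uniformly bounded distortion as $\lambda_n \to \lambda_0$. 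Hence $\| \mu_{\lambda_n, r_n} \|_\infty$ stays uniformly away from $1$, the integrating maps $\phi_{\lambda_n, r_n}$ form a normal family, and a subsequential limit $\tilde{\phi}$ conjugates $G_{\lambda_0}$ (the topological matings $G_\lambda$ vary continuously in $\lambda$) to a limit correspondence $\tilde{F} = \tilde{\phi}\circ G_{\lambda_0}\circ \tilde{\phi}^{-1}$, which is holomorphic because it is a uniform limit of holomorphic correspondences and is branched-covering. Note also that the paper never needs your paragraph-two compactness of $(J_n,q_n)$: convergence happens at the level of conjugating maps, not of algebraic data. Without an argument of this kind --- that is, without invoking the specific structure of the construction in Theorem \ref{teo.continuity101} --- your compactness claims have no support, so as written the proof is incomplete, even though the skeleton and the role you assign to rigidity are exactly right.
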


As an application, we can look at the unicritical polynomials $\{ f_c(z) = z^d + c \}_{c\in \C}$. Since every point in the boundary of its connectedness locus $M_d$ is quasiconformally rigid, we get the following corollary:

\begin{cor}\label{cor.unicritical_continuity}

    In the case of the unicritical families $\{ f_c(z) = z^d + c \}_{c\in \C}$, the mating map is continuous on the whole product $M_d\times \overset{\circ}{\mathcal{D}}_{d + 1}$.
    
\end{cor}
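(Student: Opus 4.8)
The plan is to deduce the corollary directly from Theorem \ref{main.continuity} by showing that, for the unicritical family $\mathbf f=(f_c)_{c\in\C}$ (so $\Lambda=\C$ and $M_{\mathbf f}=M_d$), the exceptional set $\mathcal T$ swallows the entire topological boundary of the connectedness locus. Theorem \ref{main.continuity} gives continuity of the mating map on $(\overset{\circ}{M}_d\cup\mathcal T)\times\overset{\circ}{\mathcal D}_{d+1}$, and by Theorem \ref{main.surgery} the domain of the mating map is exactly $M_d\times\overset{\circ}{\mathcal D}_{d+1}$, since for a unicritical polynomial the filled Julia set is connected precisely when $c\in M_d$. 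Hence it suffices to prove $M_d\subseteq\overset{\circ}{M}_d\cup\mathcal T$. Because $M_d$ is closed we may write $M_d=\overset{\circ}{M}_d\cup\partial M_d$ with the two pieces disjoint, so the whole statement collapses to the single claim that every $c_0\in\partial M_d$ is quasiconformally rigid, i.e. $\partial M_d\subseteq\mathcal T$.

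To establish this boundary rigidity I would run the ``open or rigid'' dichotomy in the unicritical slice. Suppose $c_0\in\partial M_d$ and let $g$ be a polynomial quasiconformally conjugate to $f_{c_0}$, say $\phi\circ f_{c_0}=g\circ\phi$ with $\phi$ quasiconformal. Since a quasiconformal (in particular topological) conjugacy preserves the number of finite critical points and their local degrees, $g$ has a single finite critical point of local degree $d$ and is therefore affinely conjugate to some $f_{c_1}$; I may thus assume $g=f_{c_1}$. Here $f_{c_1}$ is conformally conjugate to $f_{c_0}$ exactly when $c_1=\omega c_0$ for some $(d-1)$-st root of unity $\omega$ (the rotational symmetries $z\mapsto\omega z$), so rigidity fails only if $c_1$ can be chosen outside this finite orbit. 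The Beltrami differential $\mu=\bar\partial\phi/\partial\phi$ is $f_{c_0}$-invariant; feeding the disk of coefficients $t\,\mu/\|\mu\|_\infty$, $t\in\D$, into the measurable Riemann mapping theorem with parameters (Ahlfors--Bers) yields quasiconformal maps $\phi_t$ depending holomorphically on $t$, and after the monic-centered normalization the conjugated polynomials $\phi_t\circ f_{c_0}\circ\phi_t^{-1}=f_{c(t)}$ define a holomorphic map $c\colon\D\to\C$ with $c(0)=c_0$.

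I would then close by the open mapping theorem. If $\mu\equiv 0$ then $\phi$ is conformal and we are done. Otherwise $t^\ast:=\|\mu\|_\infty$ lies in $\D$, and since $\phi_{t^\ast}$ has the same Beltrami coefficient as $\phi$ it differs from $\phi$ by postcomposition with an affine map, so $c(t^\ast)\in\{\omega c_1\}$, a symmetry orbit disjoint from $\{\omega c_0\}$; in particular $c$ is nonconstant. Thus $c(\D)$ contains a neighbourhood of $c_0$, and every parameter in $c(\D)$ is quasiconformally conjugate to $f_{c_0}$. A quasiconformal conjugacy carries the filled Julia set onto the filled Julia set and hence preserves its connectedness, so $c(\D)\subseteq M_d$, forcing $c_0\in\overset{\circ}{M}_d$ and contradicting $c_0\in\partial M_d$. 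Therefore $\mu\equiv 0$, $f_{c_0}$ is quasiconformally rigid, and $\partial M_d\subseteq\mathcal T$, which completes the reduction.

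The substance of the corollary is precisely this boundary rigidity; everything else (closedness of $M_d$, invariance of connectedness under quasiconformal conjugacy, and the identification of the conformal class with the finite orbit $\{\omega c_0\}$) is routine. The one point deserving care is that the Ahlfors--Bers deformation genuinely stays inside the one-dimensional unicritical slice: one must verify that the conjugated maps remain monic centered polynomials with a single critical point under a normalization that depends holomorphically on $t$, so that the induced parameter map $c(t)$ is well defined and holomorphic. This is standard, and I expect it to be the only technical obstacle; alternatively, one could simply cite the known quasiconformal rigidity of parameters on $\partial M_d$ and invoke Theorem \ref{main.continuity}.
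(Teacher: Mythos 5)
Your proposal is correct in substance, and its skeleton matches the paper's: reduce the corollary to the claim that every $c_0\in\partial M_d$ is quasiconformally rigid (so that $M_d\subseteq\overset{\circ}{M}_d\cup\mathcal{T}$ and Theorem \ref{main.continuity} applies), then prove that rigidity by deforming the Beltrami coefficient of a hypothetical conjugacy over a disk of parameters, mapping that disk holomorphically into the parameter plane, and deriving a contradiction with $c_0\in\partial M_d$ from the open mapping theorem. The execution, however, genuinely differs from the paper's Proposition \ref{prop.boundary_rigidity}. The paper proves rigidity once and for all for \emph{topologically complete} one-parameter families, so that the same proposition also covers the cubic family of Corollary \ref{cor.persistent_parabolic_family}; it truncates the Beltrami coefficient to the filled Julia set ($\nu_0=\nu$ on $K_\lambda$, $0$ outside), deforms along $t\nu_0$, concludes that $t\mapsto\lambda(t)$ is \emph{constant}, and finishes by noting that $\phi\circ\phi_1^{-1}$ is then a hybrid conjugacy, so uniqueness of straightening (Proposition 6 of Douady--Hubbard) produces the conformal conjugacy. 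You instead scale the full coefficient $t\mu/\|\mu\|_\infty$, recover $\phi$ itself up to affine maps at the interior parameter $t^\ast=\|\mu\|_\infty$, and detect conformal classes via the explicit symmetry orbit $\{\omega c:\omega^{d-1}=1\}$, deriving the contradiction from nonconstancy directly. What each buys: your route is self-contained for the unicritical slice, needs neither the hybrid-uniqueness theorem nor the paper's separate treatment of the measure-zero case, but it is tied to the unicritical normal form and would have to be redone (with the analogous affine classification) for the family $g_a(z)=z^3+az^2+z$, which the paper's notion of topological completeness handles uniformly.

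One logical slip should be repaired. Your case split ought to be on whether $c_1$ lies in the orbit $\{\omega c_0\}$, not on whether $\mu\equiv 0$. As written, in the case $\mu\not\equiv 0$ you assert that $\{\omega c_1\}$ is disjoint from $\{\omega c_0\}$, but that disjointness only follows from the (implicit) assumption that rigidity fails; and the closing claim ``therefore $\mu\equiv 0$'' is false in general, since rigidity asserts the \emph{existence} of a conformal conjugacy, not that every quasiconformal conjugacy is conformal --- for instance, a nontrivial $f_{c_0}$-invariant Beltrami coefficient supported on the basin of infinity integrates to a non-conformal quasiconformal conjugacy from $f_{c_0}$ to an affine copy of itself. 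The correct conclusion of your contradiction argument is simply $c_1\in\{\omega c_0\}$, which is exactly rigidity; with the cases reorganized this way, your proof is complete.
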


Another family that displays this property is the family of degree $3$ polynomials $\mathbf{f} = \{ g_a(z) = z^3 + az^2 + z \}_{a\in \C}$ (see Proposition \ref{prop.boundary_rigidity}). The parameter space of this family splits into four pieces: two baby Mandelbrot copies (see \cite{lomonaco-2014}) and two capture regions (see \cite{nakane-2005}).

\begin{cor}\label{cor.persistent_parabolic_family}

    In the case of the persistent parabolic degree $3$ family $\mathbf{f} = \{ g_a(z) = z^3 + az^2 + z \}_{a\in \C}$, the mating map is continuous on the whole product $M_\mathbf{f}\times \overset{\circ}{\mathcal{D}}_{d + 1}$.
    
\end{cor}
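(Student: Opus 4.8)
The plan is to reduce this to Theorem \ref{main.continuity}. That theorem already gives continuity on $(\overset{\circ}{M}_\mathbf{f}\cup \mathcal{T})\times \overset{\circ}{\mathcal{D}}_{d+1}$, so it suffices to show that for this particular family the full connectedness locus satisfies $M_\mathbf{f}\subseteq \overset{\circ}{M}_\mathbf{f}\cup \mathcal{T}$. Since $\mathcal{T}$ is by definition the set of quasiconformally rigid parameters outside $\overset{\circ}{M}_\mathbf{f}$, and $M_\mathbf{f}\setminus\overset{\circ}{M}_\mathbf{f}=\partial M_\mathbf{f}$, the whole statement comes down to proving that every parameter on $\partial M_\mathbf{f}$ is quasiconformally rigid. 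This is precisely the assertion of Proposition \ref{prop.boundary_rigidity}, which I would establish by exploiting the decomposition of the $a$-plane, due to Lomonaco \cite{lomonaco-2014} and Nakane \cite{nakane-2005}, into two baby Mandelbrot copies and two capture regions, treating the boundary pieces coming from each separately.

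On the closure of each baby Mandelbrot copy the free critical point of $g_a$ is not absorbed by the parabolic basin, and $g_a$ is renormalizable: on a suitable neighborhood of the relevant piece of the filled Julia set it restricts to a (parabolic-like, in the sense of Lomonaco) map of degree $2$, and the associated straightening furnishes a homeomorphism from the baby copy onto the Mandelbrot set $M$ that is a hybrid conjugacy fiberwise. Because hybrid equivalences are quasiconformal conjugacies on the filled Julia set, they carry quasiconformal conjugacy classes to quasiconformal conjugacy classes; hence the quasiconformal rigidity of the boundary points of $M$ — the same fact used for the unicritical families in Corollary \ref{cor.unicritical_continuity} — transfers to the boundary of the baby copy, giving rigidity there.

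On the closure of each capture region the mechanism is genuinely different, and I expect this to be the main obstacle. Here the free critical point lies in the parabolic basin, and the region is uniformized by recording the position of the critical value in the Fatou (Ecalle) coordinate of that basin, a biholomorphism of the capture component onto a disk under which the boundary of the capture region corresponds to the critical orbit accumulating on the boundary of the immediate basin. To prove rigidity I would argue that any quasiconformal conjugacy between two boundary parameters must preserve the parabolic fixed point, the multiplier $1$, and the dynamics on the basin, hence the normalized capture coordinate; since that coordinate is a conformal invariant that separates points of the capture region, two quasiconformally conjugate boundary parameters must coincide. The delicate point, absent in the baby-Mandelbrot case, is controlling the boundary behaviour of the capture parametrization and verifying that once the critical orbit accumulates on the Julia set no nontrivial invariant Beltrami deformation survives, so that the quasiconformal class is a single parameter. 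With $\partial M_\mathbf{f}\subseteq\mathcal{T}$ established in both cases, we obtain $M_\mathbf{f}\times \overset{\circ}{\mathcal{D}}_{d+1}\subseteq(\overset{\circ}{M}_\mathbf{f}\cup\mathcal{T})\times\overset{\circ}{\mathcal{D}}_{d+1}$ and the corollary follows from Theorem \ref{main.continuity}.
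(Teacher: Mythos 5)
Your opening reduction is correct and is exactly the paper's route: by Theorem \ref{main.continuity} it suffices to prove $\partial M_\mathbf{f}\subseteq\mathcal{T}$, i.e.\ that every boundary parameter is quasiconformally rigid, which is Proposition \ref{prop.boundary_rigidity}. The gap is in how you propose to prove that proposition, and it sits precisely where you flag it: the capture-region boundaries. Your central claim there --- that a quasiconformal conjugacy between two parameters preserves the normalized Ecalle/capture coordinate --- is false. Fatou coordinates are \emph{conformal} invariants, and a quasiconformal conjugacy need not be conformal on the parabolic basin; indeed the basin supports nontrivial invariant Beltrami forms, and integrating them is exactly what moves the critical value's Ecalle coordinate (this is why capture components are open sets of mutually quasiconformally conjugate parameters). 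So the capture coordinate cannot separate quasiconformal classes, and what remains to be shown --- that at a boundary parameter every invariant Beltrami deformation is trivial --- is the entire content of the statement; your sentence about "verifying that no nontrivial invariant Beltrami deformation survives" restates the goal rather than proving it. A secondary mismatch affects both of your cases: quasiconformal rigidity as defined in the paper quantifies over \emph{all} polynomials quasiconformally conjugate to $g_a$, not only other members of the family, so even a complete proof that distinct boundary parameters are never quasiconformally conjugate to one another would not suffice without an additional step identifying an arbitrary such polynomial with some $g_{a'}$.

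The paper closes both gaps at once and never uses the Lomonaco--Nakane decomposition. It introduces the notion of a \emph{topologically complete} family and observes that $\{g_a\}$ is one: a degree-$3$ polynomial topologically conjugate to some $g_a$ has a parabolic fixed point of multiplier $1$ (a topological invariant), hence is conformally conjugate to some member of the family. Proposition \ref{prop.boundary_rigidity} is then proved by the Douady--Hubbard deformation argument: given a quasiconformal conjugacy $\phi$ from $g_a$, $a\in\partial M_\mathbf{f}$, to a polynomial $f'$, restrict the Beltrami coefficient of $\phi$ to $K_a$, integrate the family $t\nu_0$, and use topological completeness to obtain a holomorphic map $t\mapsto a(t)$ with values in $M_\mathbf{f}$ and $a(0)=a$. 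Since $\Lambda$ is one-dimensional, a nonconstant such map would be open, contradicting $a\in\partial M_\mathbf{f}$; hence $a(t)\equiv a$, the map $\phi\circ\phi_1^{-1}$ is a hybrid conjugacy, and Douady--Hubbard uniqueness yields conformal conjugacy to $g_a$ itself. This treats baby-Mandelbrot and capture boundaries uniformly, handles arbitrary $f'$, and requires no renormalization theory or control of the capture parametrization. If you wish to salvage your approach, the deformation argument is also the natural way to fill your capture-region gap --- but once you have it, the decomposition is unnecessary.
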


This paper is structured in the following way: Section 2 will contain basic definitions and a quick overview of polynomials, representations of Hecke groups, and correspondences; Section 3 is dedicated to the proof of Theorem \ref{main.surgery} and some observations around the types of the correspondences that are obtained as matings; Section 4 will focus on the proofs of Theorems \ref{main.analyticity} and \ref{main.continuity}, and is divided into further subsections. The first subsection of Section 4 will show continuity of the mating map using classical Douady-Hubbard arguments; there, the choices made in the previous section will streamline the proofs. The second subsection tackles the analyticity result with a McMullen-Sullivan flavor; in essence, matings for polynomials within stable components determine quasiconformal deformations of correspondences, which can be encoded by analytic families of Beltrami coefficients over certain domains. Finally, in the third subsection, we prove Theorem \ref{main.continuity}, and introduce examples of families for which the results can be applied.

The author would like to acknowledge Shaun Bullett for several fruitful discussions, and in particular for suggestions regarding the presentation of Lemma \ref{lema.group_structure_stability}.

\section{Preliminaries}

We begin this section by reviewing the theories around polynomials and representations of Hecke groups, with an emphasis in understanding their parameter spaces. We then introduce the relevant object of this text: correspondences. As the surgical process of mating begins with a topological object, and as we will need a notion of convergence, it will be useful to define topological correspondences first. A more detailed study of their basic topological properties can be found in \cite{mcgehee-1992} and \cite{bullett-penrose-2001}.

\subsection{Families of polynomials}

The first ''ingredients'' of the surgery process are polynomials. Their dynamics have been widely studied, and we review here only the necessary definitions and results. Recall that the \textit{filled Julia set} of a polynomial $f$ is the set $K_f \subset \C$ of points with bounded orbit (equivalently, the complement of the basin of attraction of infinity); the \textit{Julia set} $J_f$ of $f$ coincides with the boundary $\partial K_f$.

\begin{deft}

    A family of polynomials $\textbf{f} = (f_\lambda)_{\lambda\in \Lambda}$, $\Lambda$ some complex manifold, is \textit{analytic} if the coefficients of $f_\lambda$ are analytic functions of $\lambda$ --- equivalently, if the map $\textbf{f}(\lambda, z) = f_\lambda(z)$ is holomorphic on $\Lambda\times \overline{\C}$. The \textit{connectedness locus} of $\textbf{f}$ is the set
    \[ M_\textbf{f} := \{ \lambda \in \Lambda \ | \ K_{f_\lambda} \text{ is connected} \}. \]
    
\end{deft}

\begin{figure}
    \centering
    \includegraphics[width=0.3\linewidth]{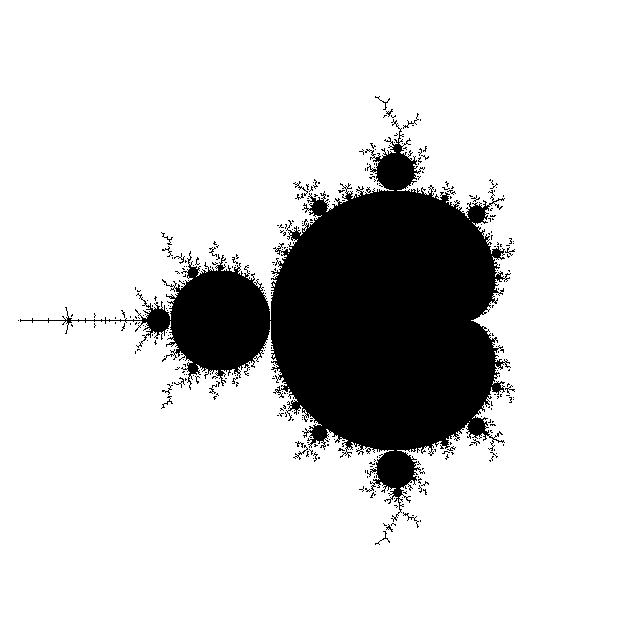}
    \includegraphics[width=0.3\linewidth]{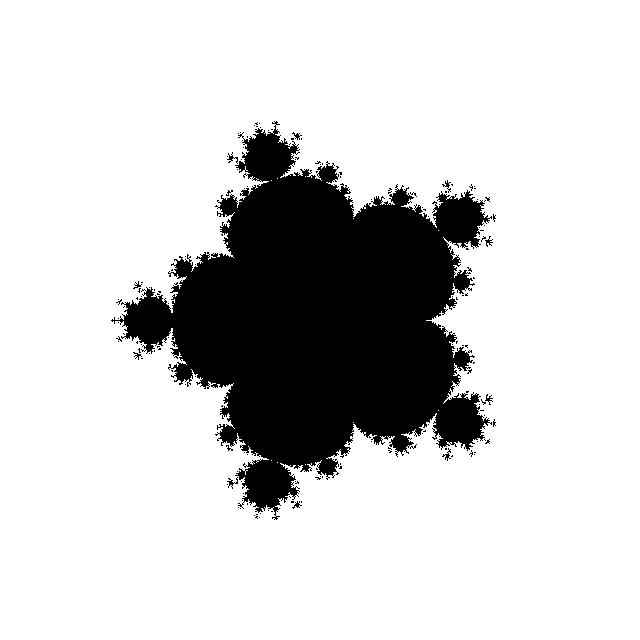}
    \includegraphics[width=0.3\linewidth]{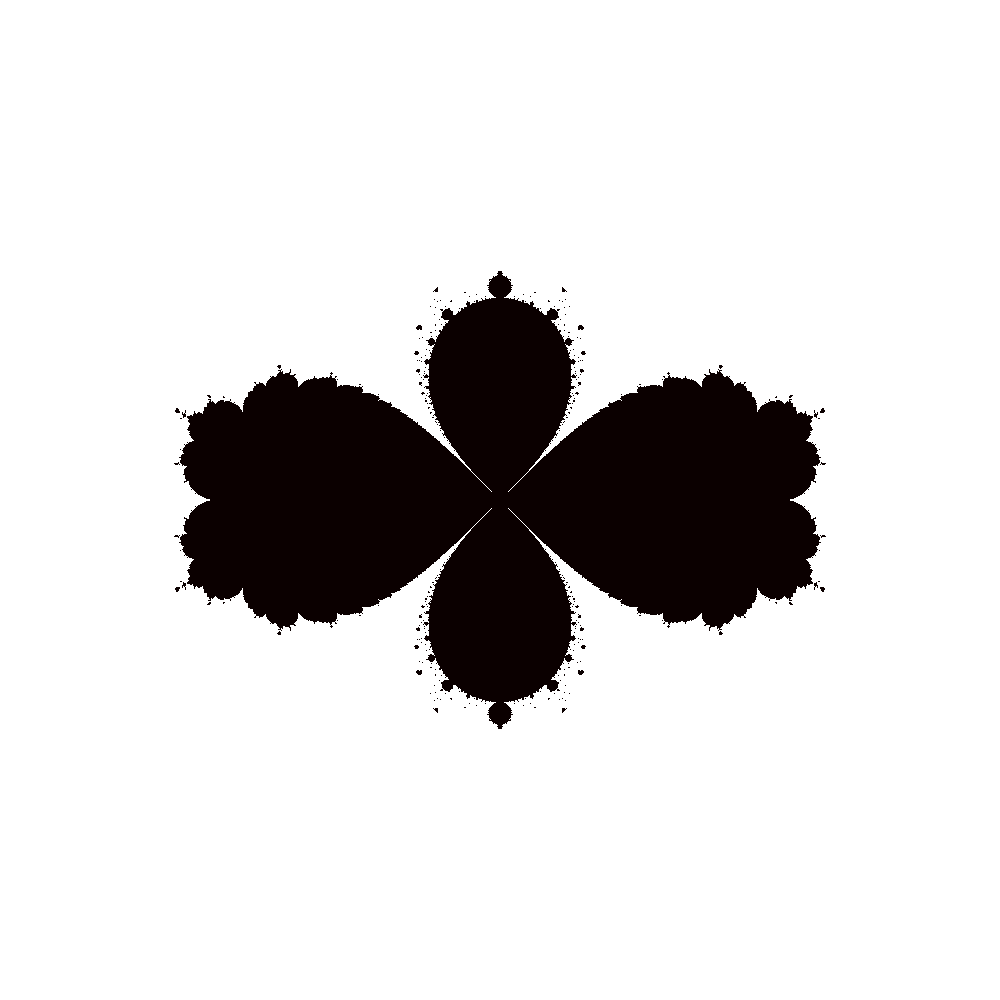}
    \caption{Three examples of connectedness loci of families of polynomials. From left to right: the Mandelbrot set, connectedness locus of the quadratic family $f_c(z) = z^2 + c$; the Multibrot set of degree $5$, connectedness locus of the quintic family $f_c(z) = z^5 + c$; the degree $3$ parabolic butterfly, connectedness locus of the family $g_a(z) = z^3 + az^2 + z$.}
    \label{fig:connectedness_loci}
\end{figure}

Recall that $K_f$ is connected if and only if all (finite) critical points of $f$ belong to $K_f$. From the classical Mañé-Sad-Sullivan theory \cite{mane-sad-sullivan-1983} (complemented by \cite{sullivan-thurston-1986}), any analytic family of polynomials induces a partition $\Lambda = \mathcal{R}_\textbf{f}\cup \mathcal{S}_\textbf{f}$ into disjoint sets, with the following properties:
\begin{itemize}
    \item $\mathcal{R}_\textbf{f}$ is an open dense subset of $\Lambda$;
    \item there is a holomorphic motion of neighborhoods of Julia sets on each connected component of $\mathcal{R}_\textbf{f}$ conjugating the actions of the polynomials on those neighborhoods;
    \item $M_\textbf{f}\cap \mathcal{R}_\textbf{f} = \overset{\circ}{M}_\textbf{f}$;
    \item $\mathcal{S}_f$ is the closure of the set of parameters presenting non-persistent indifferent periodic points.
\end{itemize}
Recall that, given $\lambda_0 \in \Lambda$ and $z_0$ a periodic point of $f_{\lambda_0}$ of period $k$, $z_0$ is said to be \textit{indifferent} if $(f_{\lambda_0}^k)'(z_0) \in \s^1$. We will say then that $z_0$ is a \textit{persistent} indifferent periodic point for $f_{\lambda_0}$ if there is a continuous function $z(\lambda)$ defined in a neighborhood of $\lambda_0$ such that: $z(\lambda)$ is a periodic point of period $k$ for $f_\lambda$; $z(\lambda_0) = z_0$; and
\[ (f_\lambda^k)'(z(\lambda)) = (f_{\lambda_0}^k)'(z_0). \]
Otherwise, we say $z_0$ is a \textit{non-persistent} indifferent periodic point.

From these properties of the Mañé-Sad-Sullivan partition, we see that the interior of the connectedness locus is comprised of \textit{J-stable} components: open sets of parameters over which the actions of the respective maps on their Julia sets are all conjugate to each other.

The theory of polynomials heavily depends on a more general notion: that of \textit{polynomial-like} maps. A polynomial-like map is a holomorphic map $f: U' \to U$ defined on an open subset $U' \subset \C$, satisfying the following properties:
\begin{itemize}
    \item $U', U$ are topological disks;
    \item $U' \Subset U$ --- i.e. $\overline{U}' \subset U$;
    \item $f: U' \to U$ is proper, of some degree $d$.
\end{itemize}
The same definitions and results above for analytic families of polynomials translate to \textit{analytic} families of polynomial-like maps (see \cite{douady-hubbard-1985} for the precise definition). In particular, a similar Mañé-Sad-Sullivan decomposition also exists in this context.

\subsection{Representations of Hecke groups}

Hecke groups are generalizations of the modular group $\PSL(2, \Z)$. While the modular group is generated by the elements
\[ \sigma(z) = \frac{-1}{z} \ \text{ and } \ \rho(z) = -\frac{z + 1}{z} \]
which are rotations on $\overline{\C}$ of angles $\pi$ and $2\pi/3$, respectively, the Hecke group $H_{d+1}$ replaces the order $3$ element by the order $d + 1$ element
\[ \rho(z) = -\frac{2\cos(\pi/(d + 1))z + 1}{z} \]
which is a rotation of angle $2\pi/(d + 1)$. If $r: H_{d+1} \to \PSL(2, \C)$ is a \textit{representation} of $H_{d+1}$ --- i.e. a morphism of groups, we denote $\rho_r := r(\rho)$ and $\sigma_r := r(\sigma)$ the generators of $r(H_{d + 1})$. We want to look at non-trivial representations, which is to say that we want the elements $\rho_r$ and $\sigma_r$ to not be the identity nor powers of each other. We can then use the cross-ratio of the fixed points of these generators as a parametrization for the conjugacy classes of such representations. Indeed, if two representations $r_1, r_2$ produce conformally conjugate images, then the conjugating map has to send the fixed points of each generator of $r_1(H_{d+1})$ to the fixed points of the generators of $r_2(H_{d+1})$, and thus the cross-ratios are either preserved or inverted (since one can always swap the fixed points). Conversely, if the cross-ratios associated to the representations $r_1, r_2$ are the same or inverses of each other, then there is a M\"{o}bius map sending the fixed points of generators of $r_1(H_{d+1})$ to the fixed points of generators of $r_2(H_{d+1})$, and since these generators are rotations, this M\"{o}bius map must then induce a conjugacy between the generators (and therefore the groups). Such cross-ratios can span all complex numbers in $\C\setminus \{0, 1\}$: one can simply consider any four distinct points $z_1, z_2, w_1, w_2$, send $\rho$ to rotation by $2\pi/(d + 1)$ about $z_1$ and $z_2$, and $\sigma$ to rotation by $\pi$ about $w_1$ and $w_2$; this is most obviously seen by setting $w_1 = 0, w_2 = \infty, z_2 = 1$, so that $z_1$ is exactly the cross-ratio. Identifying $z_1$ and $z_1^{-1}$ is the same as taking the image under the map $z \mapsto (z + 1/z)/2$, which maps $\C\setminus\{0, 1\}$ to $\C\setminus\{1\}$.

\begin{deft}

    A representation $r: \Gamma \to \PSL(2, \C)$ of a group $\Gamma$ is
    \begin{itemize}
        \item \textit{faithful} if $r$ is a group isomorphism between $\Gamma$ and its image $r(\Gamma)$;
        \item \textit{discrete} if the image $r(\Gamma)$ is a discrete subgroup of $\PSL(2, \C)$.
    \end{itemize}
    In the case of $\Gamma = H_{d+1}$, we define the \textit{discreteness locus} $\mathcal{D}_{d+1}$ of representations as the set of all conformal conjugacy classes of non-trivial discrete representations.
    
\end{deft}

Using the parametrization via cross-ratios, we can understand $\mathcal{D}_{d + 1}$ as a subset of $\C\setminus \{1\}$. It is a closed set, and its interior $\overset{\circ}{\mathcal{D}}_{d+1}$ is comprised of the parameters associated to discrete faithful representations of $H_{d + 1}$ with connected regular set. This interior is in fact a single connected quasiconformal conjugacy class of quasifuchsian representations of $H_{d + 1}$ of the first kind, and is isomorphic to a punctured disk.

\begin{figure}
    \centering
    \includegraphics[scale = 0.7]{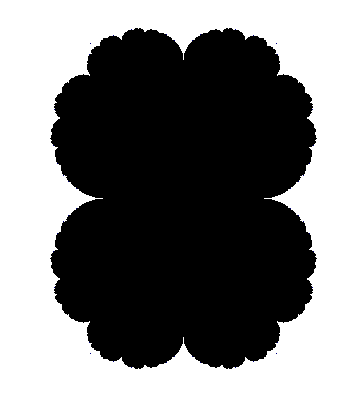}
    \caption{Representation of $\overset{\circ}{\mathcal{D}}_{d+1}$. There is a puncture in the center of the image, and the cusps correspond to certain hyperbolic elements turning into parabolic ones.}
\end{figure}

Given now any non-trivial representation $r$ of $H_{d + 1}$, we can find a unique M\"{o}bius involution $\chi_r$ anti-commuting with $\rho_r$ and $\sigma_r$. Indeed, as is justified in \cite{bullett-harvey-2000}, we can interpret $\rho_r$ and $\sigma_r$ as rotations about certain geodesics on the Poincaré ball. The rotation by $\pi$ about the unique geodesic that intersects both these axes perpendicularly induces on $\overline{\C}$ the action of $\chi_r$. Since $\chi_r$ must exchange the fixed points of $\rho_r$ with each other, we get that $\chi_r\rho_r = \rho_r^{-1}\chi_r$, and similarly $\chi_r\sigma_r = \sigma_r^{-1}\chi_r$ (in this case, this is also equal to $\sigma_r\chi_r$ since $\sigma$ is an involution as well). We will denote $\Gamma(r) := \left< \rho_r, \sigma_r, \chi_r \right>$. We remark that the regular set of $\Gamma(r)$ is the same as the regular set of $r(H_{d + 1})$; we will denote $\Omega(r)$ this common regular set.

\textbf{Remark}: The actual Hecke groups are at the boundary of $\overset{\circ}{\mathcal{D}}_{d + 1}$, since they present parabolic elements. See \cite{bullett-lomonaco-2019, bullett-lomonaco-2022, bullett-lomonaco-2024} for matings with the modular group, and \cite{bullett-lomonaco-lyubich-mukherjee-2024} for matings with Hecke groups.

\subsection{Correspondences}

We finally introduce the objects that will realize our matings: correspondences. The following exposition follows \cite{bullett-penrose-2001}, but we remark that a solid basic theory of correspondences is still lacking, with most results treating only very specific kinds of correspondences.

\begin{deft}

    A \textit{correspondence} $F$ on $\overline{\C}$ is any relation on $\overline{\C}$ --- i.e. any subset of $\overline{\C}\times \overline{\C}$, understood dynamically by setting the \textit{image} of a point $z \in \overline{\C}$ as
    \[ F(z) := \{ w \in \overline{\C} \ | \ (z, w) \in F \}; \]
    see Figure \ref{fig:image_correspondence}. The \textit{inverse} correspondence is the transpose set
    \[ F^{-1} := \{ (z, w) \in \overline{\C}\times \overline{\C} \ | \ (w, z) \in F \}, \]
    and the image under $F^{-1}$ of a point $z \in \overline{\C}$ defines its \textit{pre-image} under $F$. When $F$ is a closed subset of $\overline{\C}\times \overline{\C}$, we say $F$ is a \textit{closed} correspondence. If $F_n \subset \overline{\C}\times \overline{\C}$ is a sequence of closed correspondences, we say that the $F_n$ \textit{converge uniformly} to $F \subset \overline{\C}\times \overline{\C}$ if they converge as compact subsets of $\overline{\C}\times \overline{\C}$ in the Hausdorff topology.
    
\end{deft}

\begin{figure}
    \centering
    \includegraphics[scale = 0.7]{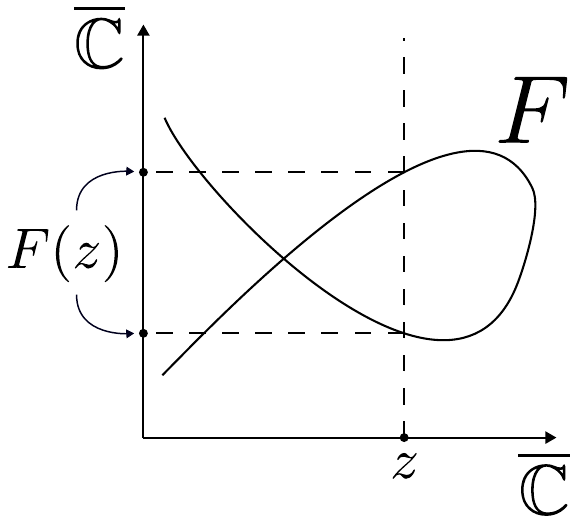}
    \caption{The correspondence $F$ can be understood dynamically a graph of a multivalued function.}
    \label{fig:image_correspondence}
\end{figure}

To understand correspondences as multivalued dynamical systems, we need to be able to iterate them.

\begin{deft}

    If $F, G$ are correspondences on $\overline{\C}$, the composition $G\circ F$ is defined as
    \[ G\circ F := \{ (z, w) \in \overline{\C}\times \overline{\C} \ | \ \exists v \in \overline{\C}, (z, v) \in F, (v, w) \in G \}. \]
    
\end{deft}

It is then quite immediate to show that the composition of closed correspondences is a closed correspondence. We will want to specialize ourselves to the case where correspondences have well defined branches (maybe outside of a finite set of singularities).

\begin{deft}

    Let $F$ be a correspendence on $\overline{\C}$, and let $\pi_i: \overline{\C}\times \overline{\C} \to \overline{\C}$ be the projection onto the $i^{\text{th}}$ coordinate, $i = 1,2$. We then say that $F$ is \textit{open} if the restrictions $\pi_i|_F: F \to \overline{\C}$ are open maps. We say a pair $(z, w) \in F$ is \textit{forward-regular} if $\pi_1|_F$ is a local homeomorphism at that point --- i.e. there is a neighborhood of $(z, w)$ in $F$ where the restriction of $\pi_1|_F$ is a homeomorphism with its image; otherwise, it is called \textit{forward-singular}. A \textit{backward-regular} pair in $F$ is any forward-regular pair of $F^{-1}$; otherwise it is called \textit{backward-singular}. We then say $F$ is a \textit{branched-covering correspondence} if:
    \begin{itemize}
        \item $F$ is closed;
        \item $F$ is open;
        \item images and pre-images of any point in $\overline{\C}$ are discrete (in particular, finite) sets;
        \item the set of points $z \in \overline{\C}$ such that there is some forward-singular pair $(z, w) \in F$ is discrete (in particular, finite);
        \item the set of points $w \in \overline{\C}$ such that there is some backwards-singular pair $(z, w) \in F$ is discrete (in particular, finite).
    \end{itemize}
    
\end{deft}

Around forward-regular pairs, we can define local branches of $F$, by composing $\pi_2\circ \pi_1^{-1}$ on the neighborhood where $\pi_1$ is a homeomorphism; see Figure \ref{fig:correspondence_branches}. As is observed in \cite{bullett-penrose-2001}, branched-covering correspondences have a well defined \textit{bidegree} $(d_1, d_2)$, which means that the image of a generic point has $d_2$ elements, while the pre-image has $d_1$. We say that such a correspondence is \textit{$d_1$ to $d_2$}, and write that as $d_1:d_2$.

\begin{figure}
    \centering
    \includegraphics[scale = 0.7]{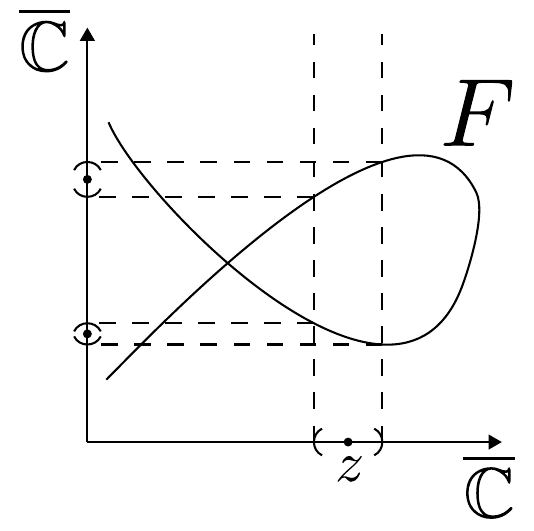}
    \caption{Local branches of a branched-covering correspondence.}
    \label{fig:correspondence_branches}
\end{figure}

\begin{deft}

    If $F$ is a correspondence on $\overline{\C}$ that, as a set, is a complex subvariety of $\overline{\C}\times \overline{\C}$, we call $F$ a \textit{holomorphic correspondence}.
    
\end{deft}

One can verify that holomorphic correspondences are branched-covering. Furthermore, from Chow's theorem, it follows that any holomorphic correspondence on $\overline{\C}$ is actually algebraic, i.e. can be defined as the zero set of a rational map $T$:
\[ F = \{ (z, w) \in \overline{\C}\times \overline{\C} \ | \ T(z, w) = 0 \}. \]
We remark that a rational map $T: \overline{\C}\times \overline{\C} \to \overline{\C}$ is defined to be any algebraic map, when we look at $\overline{\C}$ as the projective complex line --- i.e. in homogeneous coordinates, $T$ is a polynomial, separately homogeneous on the coordinates of each copy of projective space. In other words, $T(z, \cdot)$ and $T(\cdot, w)$ are rational maps for all $z, w \in \overline{\C}$, with degrees (generically) independent on the choices of $z$ and $w$. The bidegree is thus given by the degrees of these rational maps.

Using the fact that holomorphic correspondences have holomorphic local branches, one can show that their composition is also a holomorphic correspondence. Since we are on $\overline{\C}$, we conclude that this composition is also algebraic, again by Chow's theorem. Another consequence of looking at local branches is that, whenever a sequence of holomorphic correspondences $F_n$ converges uniformly to a branched-covering correspondence $F$, we get that $F$ is also holomorphic.

For the purposes of surgery, we need to define a final kind of correspondence.

\begin{deft}

    A branched-covering correspondence $F$ on $\overline{\C}$ is said to be \textit{quasiregular} if any locally defined branch of $F$ (outside its finitely many forward-singularities) is a quasiregular map.
    
\end{deft}

It is then obvious that, if one finds a Beltrami coefficient $\mu$ that is invariant under $F$ --- i.e. invariant under branches of $F$, and take $\phi$ any quasiconformal integrating map --- i.e. $\phi^\ast\mu_0 = \mu$, then the composition $\phi\circ F\circ \phi^{-1}$ will have holomorphic branches, and therefore is a holomorphic/algebraic correspondence.

A trivial example of a correspondence is the diagonal $\Diag := \{ (z, z) \ | \ z \in \overline{\C} \}$. This is the graph of identity, and thus dynamically it behaves as identity. Any rational function defines a correspondence via its graph, and in fact any finite collection of functions (e.g. the generators of a finitely generated Kleinian group) defines a correspondence via the union of their graphs. The simplest examples of correspondences that are not just unions of graphs of actual functions come from deleted covering correspondences.

\begin{deft}
    Let $q$ be a rational map of the sphere. The \textit{covering correspondence} $\Cov^q$ is defined as $\Cov^q := \{ (z, w) \in \overline{\C}\times \overline{\C} \ | \ q(z) = q(w) \}$; notice that $\Diag \subseteq \Cov^q$. The \textit{deleted covering correspondence} $\Cov_0^q$ is defined as
    \[ \Cov_0^q = \left\{ (z, w) \in \C\times \C \ \left| \ \frac{q(z) - q(w)}{z - w} = 0 \right. \right\}, \]
    that is, it is $\Cov^q$ with a copy of $\Diag$ deleted from it.
\end{deft}

The above definition just tells us that $\Cov_0^q$ is mapping $z$ to all points that have the same image as $z$ under $q$, and then deleting one copy of $z$ from this image (i.e. removing one copy of the graph of identity from the curve). Covering correspondences can be very easily classified.

\begin{lema}\label{lema.equivalence_relation_covering}

    If a holomorphic correspondence $F$ is an equivalence relation, then it is a covering correspondence of some rational map.
    
\end{lema}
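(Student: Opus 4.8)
The plan is to build the quotient of $\overline{\C}$ by the equivalence relation $F$ and to recognize that quotient as another copy of the sphere. First I would unwind what ``equivalence relation'' means at the level of the correspondence: reflexivity gives $\Diag \subseteq F$, symmetry gives $F = F^{-1}$ (so $F$ has bidegree $(d,d)$ for some $d$ and is $d:d$), and transitivity together with reflexivity gives $F\circ F = F$. The combination yields the crucial set-theoretic fact that the fibre $F(z)$ depends only on the class of $z$: explicitly $F(w) = F(z) \iff w \in F(z)$, since $w \in F(w)$ by reflexivity and distinct classes are disjoint. Thus the fibres of the ``class map'' $z \mapsto F(z)$ are exactly the equivalence classes, each generically consisting of $d$ distinct points.

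Next I would realize this class map holomorphically. Writing $F = \{T = 0\}$ for a reduced bihomogeneous polynomial $T$, the assignment $q_0\colon z \mapsto F(z)$, viewed as the unordered $d$-tuple of roots of $w \mapsto T(z,w)$, is a holomorphic map $q_0\colon \overline{\C} \to \mathrm{Sym}^d(\overline{\C})$, since the roots of a polynomial depend holomorphically on its (holomorphically varying) coefficients. By the previous paragraph its fibres are precisely the equivalence classes, so $q_0$ separates classes and is generically $d$-to-$1$; its image $X := q_0(\overline{\C})$ is an irreducible compact algebraic curve and $q_0\colon \overline{\C} \to X$ is finite.

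Finally I would identify $X$. Let $\nu\colon \tilde{X} \to X$ be the normalization and lift $q_0$ to $q\colon \overline{\C} \to \tilde{X}$ with $q_0 = \nu\circ q$. Since $\tilde{X}$ admits the non-constant holomorphic map $q$ from the genus-zero surface $\overline{\C}$, Riemann--Hurwitz forces $\tilde{X}$ to have genus zero, hence $\tilde{X} \cong \overline{\C}$ and $q$ is a rational map. It remains to check $F = \Cov^q$. Because $\nu$ is generically injective, the generic fibre of $q$ equals that of $q_0$, so $q$ has degree $d$ and $\Cov^q$ is again a closed $d:d$ correspondence; and for generic $z$ the two $d$-point sets $F(z)$ and $\{w : q(w) = q(z)\}$ coincide, giving $F \subseteq \Cov^q$ on a dense set and hence, by closedness and equality of generic fibres, $F = \Cov^q$.

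The main obstacle is the passage through the quotient: verifying that $q_0$ is genuinely holomorphic into the symmetric product and, above all, that after normalization the clean identity $q(z) = q(w) \iff (z,w) \in F$ survives --- i.e. that $\nu$ does not merge or split classes in a way that breaks the correspondence between $q$-fibres and $F$-classes. This is where the finiteness of $F$ and the comparison of generic fibre cardinalities do the real work. An alternative, more hands-on route would be to equip $\overline{\C}/F$ directly with a Riemann-surface structure by using symmetric functions of the local branches of $F$ as charts, exactly as one does for quotients by finite group actions, the branches being closed under composition and inversion by transitivity and symmetry.
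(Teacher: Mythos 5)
Your proof is correct and takes essentially the same route as the paper: pass to the quotient of $\overline{\C}$ by the equivalence relation, invoke Riemann--Hurwitz to force that quotient to have genus zero, and identify the quotient map with a rational map $q$ satisfying $F = \Cov^q$. The only difference is one of detail: where the paper simply asserts that $\overline{\C}/F$ inherits a Riemann surface structure, you realize the quotient concretely as the normalization of the image of the class map $z \mapsto F(z)$ in $\mathrm{Sym}^d(\overline{\C})$, which substantiates the step the paper leaves implicit.
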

\begin{proof}

    Since $F$ is holomorphic, the quotient $\overline{\C}/F$ inherits the structure of a Riemann surface. By the Riemann-Hurwitz theorem, this surface has to be a sphere, since the projection map $\pi: \overline{\C} \to \overline{\C}/F$ is holomorphic. Thus, $\pi$ is realized as a rational map $q: \overline{\C} \to \overline{\C}$. It is now obvious that $F = \Cov^q$.
    
\end{proof}

Notice that post-composing $q$ with any M\"{o}bius transformations $\psi$ maintains the covering correspondence the same: $z$ and $w$ are mapped to the same point under $q$ if and only if they are mapped to the same point under $\psi\circ q$. On the other hand, conjugating $\Cov^q$ with a M\"{o}bius transformation $\phi$ results in another deleted covering correspondence, given by $\Cov^{q\circ \phi}$, since $z$ and $w$ have the same image under $q$ if and only if $\phi^{-1}(z)$ and $\phi^{-1}(w)$ have the same image under $q\circ \phi$. Thus, two covering correspondences $\Cov^{q_1}$ and $\Cov^{q_2}$ are conjugate if and only if there are M\"{o}bius transformations $\phi, \psi$ such that $q_2 = \psi\circ q_1\circ \phi$. The same follows for deleted covering correspondences since the identity is conjugate to itself under any map.

\begin{deft}

    We say two rational maps $q_1, q_2: \overline{\C} \to \overline{\C}$ are \textit{conformal covering equivalent} if there are M\"{o}bius maps $\phi, \psi: \overline{\C} \to \overline{\C}$ such that $q_2 = \psi\circ q_1\circ \phi$.
    
\end{deft}

The mated correspondences will arise as compositions $J\circ \Cov_0^q$, where $J$ is an involution and $q$ is a polynomial (see Proposition \ref{prop.classify_matings}). The key feature of these correspondences is the fact that the involution $J$ conjugates it with its inverse; indeed, $(z, w) \in \Cov_0^q\circ J$ if and only if $(J(z), w) \in \Cov_0^q$, if and only if $(w, J(z)) \in \Cov_0^q$ (since deleted covering correspondences are their own inverses), if and only if $(w, z) \in J\circ \Cov_0^q$.

\begin{deft}

    We define $C_d$ the space of all $d:d$ correspondences of the form $J\circ \Cov_0^q$, where $J$ is some involution of $\overline{\C}$ and $q$ is a polynomial of degree $d + 1$. We also let $\mathcal{C}_d$ be the quotient of $C_d$ under the relation of conformal conjugacy.
    
\end{deft}

\begin{prop}

    If the correspondences $J_1\circ \Cov_0^{q_1}$ and $J_2\circ \Cov_0^{q_2}$ are conformally conjugate, then $J_1$ and $J_2$ are conformally conjugate, and $q_1$ and $q_2$ are conformal covering equivalent.
    
\end{prop}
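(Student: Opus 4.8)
The plan is to recover both factors $\Cov_0^{q}$ and $J$ canonically from the correspondence $F := J\circ \Cov_0^{q}$ itself, so that any conjugacy of correspondences is forced to respect the decomposition. Throughout I assume $d\geq 2$, so that $q$ has degree $d+1\geq 3$; this is the range relevant for matings (the degenerate case $d=1$, where $F$ is a single Möbius map, can be treated by an elementary direct argument). Write $F_i = J_i\circ \Cov_0^{q_i}$ and suppose $\phi\circ F_1\circ \phi^{-1} = F_2$ for a Möbius map $\phi$. Since conjugation by $\phi$ is an automorphism of the composition operation and commutes with taking inverses, it suffices to exhibit two conjugation-equivariant operations $F\mapsto \Cov^{q}$ and $F\mapsto J$.

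First I would recover the covering correspondence. Because $J$ and $\Cov_0^{q}$ are their own inverses, $F^{-1} = \Cov_0^{q}\circ J$, and a direct computation with the paper's composition convention gives
\[ F^{-1}\circ F \;=\; \bigl\{ (z,w) : q(z)=q(w)\ \text{and}\ \exists\, u\notin\{z,w\}\ \text{with}\ q(u)=q(z) \bigr\}. \]
For $d+1\geq 3$ a generic fibre of $q$ has at least three points, so the auxiliary point $u$ always exists; hence this set is dense in, and therefore (both being closed holomorphic correspondences) equal to, $\Cov^{q}$. Thus $F^{-1}\circ F = \Cov^{q}$ is recovered canonically. Applying $\phi$ gives $\phi\circ \Cov^{q_1}\circ \phi^{-1} = \Cov^{q_2}$, and by the classification of covering correspondences stated above this is exactly the assertion that $q_1$ and $q_2$ are conformal covering equivalent. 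Deleting the (conjugation-invariant) diagonal also yields $\phi\circ \Cov_0^{q_1}\circ \phi^{-1} = \Cov_0^{q_2}$.

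It remains to pin down $J$, which I expect to be the main obstacle. The point is a cancellation lemma: if $J$ and $J'$ are Möbius maps with $J\circ \Cov_0^{q} = J'\circ \Cov_0^{q}$, then $J=J'$. To prove it, set $K := (J')^{-1}\circ J$; the hypothesis says $K$ preserves the set $\Cov_0^{q}(z) = q^{-1}(q(z))\setminus\{z\}$ for every generic $z$. Fixing a generic value $t$ with fibre $\{z_1,\dots,z_{d+1}\}$, the map $K$ then preserves each $q^{-1}(t)\setminus\{z_i\}$, hence their union $q^{-1}(t)$, and therefore each singleton $\{z_i\}$; so $K$ fixes every point of every generic fibre, a dense set, forcing $K=\mathrm{id}$. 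Granting this, $J$ is the unique Möbius map with $J\circ \Cov_0^{q} = F$, a canonical operation once $\Cov_0^{q}$ has been recovered. Combining with the previous paragraph,
\[ J_2\circ \Cov_0^{q_2} = F_2 = \phi\circ F_1\circ \phi^{-1} = (\phi\, J_1\, \phi^{-1})\circ(\phi\, \Cov_0^{q_1}\, \phi^{-1}) = (\phi\, J_1\, \phi^{-1})\circ \Cov_0^{q_2}, \]
so uniqueness gives $J_2 = \phi\, J_1\, \phi^{-1}$, i.e. $J_1$ and $J_2$ are conformally conjugate. The only genuinely delicate points are the set-theoretic identity $F^{-1}\circ F = \Cov^{q}$ (where one must verify that a generic fibre really supplies the extra preimage $u$, using $d\geq 2$) and the fibrewise cancellation argument; everything else is formal manipulation of the composition of correspondences.
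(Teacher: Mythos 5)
Your proof is correct, but it reverses the order of the paper's argument and rests on a different uniqueness statement. The paper recovers the involution first: since $J_1$ conjugates $F_1=J_1\circ \Cov_0^{q_1}$ with its inverse, the map $\phi\circ J_1\circ\phi^{-1}$ is a time-reversing involution for $F_2$, and the paper asserts --- via the claim that it and $J_2$ are involutions with the same fixed points --- that it must equal $J_2$; conjugacy of the deleted covering correspondences then falls out. You instead recover the covering correspondence first, through the canonical identity $F^{-1}\circ F=\Cov^{q}$ (which is where the hypothesis $\deg q = d+1\geq 3$ enters), and then pin down $J$ with your cancellation lemma, i.e.\ uniqueness of the M\"{o}bius factor in the decomposition $F=J\circ\Cov_0^{q}$. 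Your route buys genuine robustness: the cancellation lemma is a complete elementary proof, whereas uniqueness of time-reversing involutions, as invoked in the paper, is delicate --- a correspondence with extra symmetry can admit several time-reversing involutions with \emph{different} fixed points. For instance, $F=\{zw=\omega\}\cup\{zw=\omega^2\}$ with $\omega=e^{2\pi i/3}$, which is $J\circ\Cov_0^{z^3}$ for $J(z)=1/z$, satisfies $F^{-1}=F$ and also admits $z\mapsto -1/z$ as a time-reversing involution, with fixed points $\pm i$ rather than $\pm 1$. Your argument is immune to this because it characterizes $J$ by the equation $F=J\circ\Cov_0^{q}$ rather than by time reversal; the only cost is excluding $d=1$, which is irrelevant here since the Hecke groups $H_{d+1}$ require $d+1\geq 3$.

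One point to tighten: your displayed formula for $F^{-1}\circ F$ implicitly takes $\Cov_0^{q}(z)=q^{-1}(q(z))\setminus\{z\}$ for every $z$, but with the paper's definition (vanishing of the divided difference) $\Cov_0^{q}$ also contains the diagonal points $(c,c)$ at critical points $c$ of $q$; consequently the set on the right-hand side of your display is not closed in general (it omits $(0,0)$ when $q(z)=z^{d+1}$, say), so it cannot literally equal the composition. This does not damage the proof: the two facts you actually use --- that $F^{-1}\circ F\subseteq\Cov^{q}$, and that $F^{-1}\circ F$ contains every point of $\Cov^{q}$ lying over a non-critical value --- hold exactly as you argue, and closedness of compositions of closed correspondences then forces $F^{-1}\circ F=\Cov^{q}$. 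The same caveat is why your cancellation lemma correctly restricts attention to generic $z$; that part is stated carefully and needs no change.
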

\begin{proof}

    If $\phi$ is the conjugation between the correspondences, then, since $J_1$ is a time-reversing involution for $J_1\circ \Cov_0^{q_1}$ (i.e. it conjugates with the inverse), the map $\phi\circ J_1\circ \phi^{-1}$ is a time-reversing involution for $J_2\circ \Cov_0^{q_2}$, concluding that it must be $J_2$. Indeed, they both are involutions, and they both must have the same fixed points. This means that $\phi$ must also conjugate the deleted covering correspondences, which, as observed before, is equivalent to saying that $q_1$ and $q_2$ are conformal covering equivalent.
    
\end{proof}

Thus, if we want to fix an involution $J$, any correspondence $J'\circ \Cov_0^{q'}$, with $q'$ a polynomial, can be conjugate to one of the form $J\circ \Cov_0^q$, but now $q$ is just a rational map conjugate to a polynomial (equivalently, a rational map with a completely invariant fixed point).

\begin{deft}

    Taking $J$ a M\"{o}bius involution, we define $\Corr_d^J$ the space of all $d:d$ correspondences of the form $J\circ \Cov_0^q$, where $q$ is a rational map conjugate to a polynomial of degree $d+1$.
    
\end{deft}

\section{The Bullett-Harvey Surgery}

In this section, we fix $f$ a degree $d$ polynomial with connected filled Julia set $K_f$ and $r$ a faithful discrete representation of $H_{d + 1}$ with connected regular set. Recall that $K_f$ is the filled Julia set of $f$, that $\rho_r, \sigma_r$ are the generators of $r(H_{d + 1})$, that $\chi_r$ is the unique involution anti-commuting with both these maps, that $\Gamma(r)$ is the group generated by $\rho_r, \sigma_r, \chi_r$, and that $\Omega(r)$ is the common regular set of $r(H_{d + 1})$ and $\Gamma(r)$, as defined in the previous section. We will apply similar ideas from the original construction in \cite{bullett-harvey-2000} to mate these two objects, obtaining a holomorphic correspondence at the end, but a few different choices will be made to facilitate the proofs of Section 4. We first begin by defining the concept of mating we are dealing with.

\begin{deft}

    A holomorphic correspondence $F$ on $\overline{\C}$ is a \textit{mating} between $f$ and $r(H_{d+1})$ if there is a partition $\overline{\C} = \Omega\cup \Lambda$ into completely $F$-invariant, disjoint sets, such that:
    \begin{itemize}
        \item $\Lambda = \Lambda^-\cup \Lambda^+$ is the disjoint union of two compact sets, with $F$ sending $\Lambda^-$ to itself via a $1:2$ branch that is hybrid conjugate to $f$ in a neighborhood of $\Lambda^-$, and $F$ sending $\Lambda^+$ to itself as a $2:1$ correspondence that is hybrid conjugate to $f^{-1}$ in a neighborhood of $\Lambda^+$;

        \item $\Omega$ is an open set in which the action of $F$ is discrete, and the quotients $\Omega/F$ and $\Omega(r)/\Gamma(r)$ are biholomorphic.
    \end{itemize}
    
\end{deft}

We recall that a \textit{hybrid} conjugacy between polynomial-like maps $f: U' \to U, g: V' \to V$ is a quasiconformal conjugacy $\phi$ between (polynomial-like restrictions of) $f$ and $g$, satisfying $\overline{\partial}\phi|_{K_f} \equiv 0$, that is to say, $1$-quasiconformal on the filled Julia set of $f$. Thus, in the first item of the definition, by saying that the action of $F$ on $\Lambda^-$ is hybrid conjugate to that of $f$, we mean that there are neighborhoods $U$ of $K_f$ and $V$ of $\Lambda^-$, and a quasiconformal map $\phi: U \to V$ which is $1$-quasiconformal on $K_f$, conjugating the actions of the polynomial-like restrictions $f: f^{-1}(U) \to U$ and $F: F^{-1}(V)\cap V \to V$. The notion for the action of $F$ on $\Lambda^+$ is analogous.

\begin{figure}
    \centering
    \includegraphics[width=0.9\linewidth]{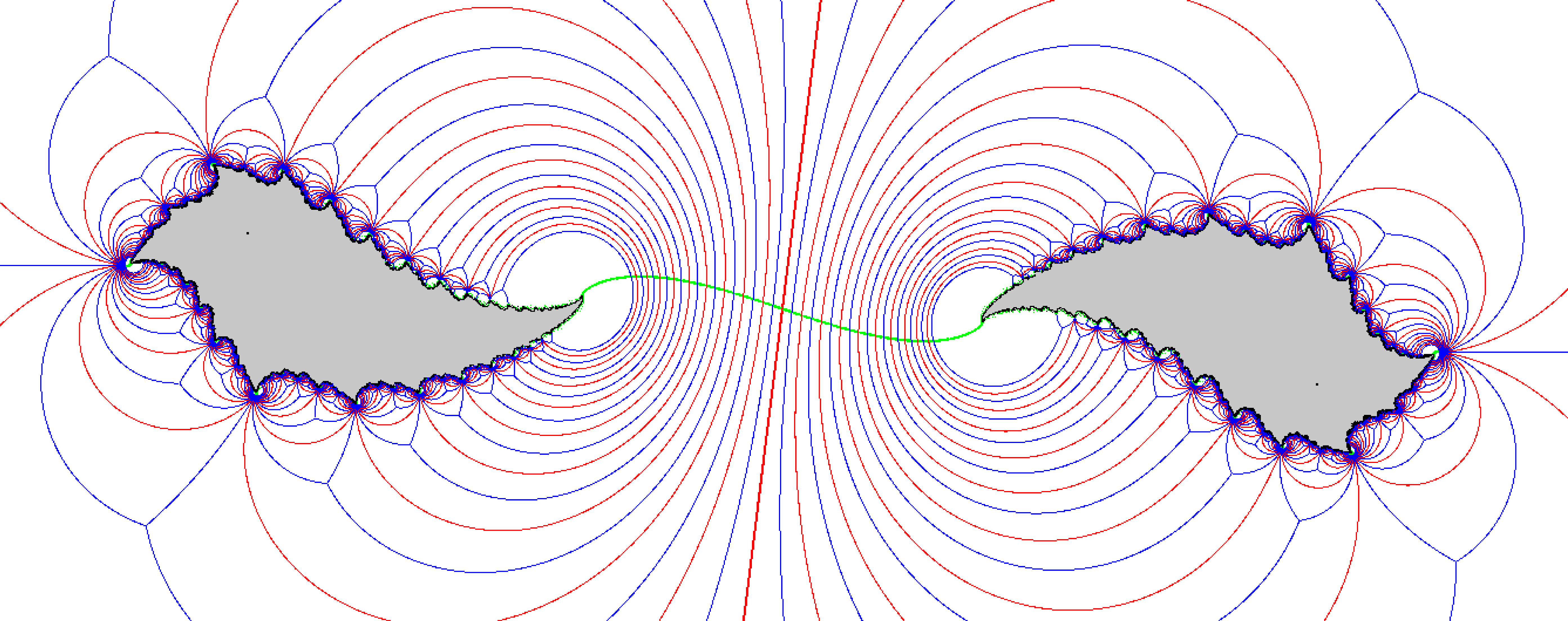}
    \caption{Example of a mating between a quadratic polynomial and a representation of $H_3$. The limit set on the left admits a polynomial-like restriction. The blue and red curves display fundamental domains for the action of $F$ on $\Omega$. This is the correspondence
    \[ F = \left\{ \left( \frac{az + 1}{z + 1} \right)^2 + \left( \frac{az + 1}{z + 1} \right)\left( \frac{aw - 1}{w - 1} \right) + \left( \frac{aw - 1}{w - 1} \right)^2 = 3k \right\} \]
    for $a = 4.53926 + 0.439437 i$ and $k = 0.9 + 0.1 i$.}
    \label{fig:mating_example}
\end{figure}

\textbf{Remark}: Notice that the notion of mating only depends on the polynomial and the group up to conformal conjugacy. Furthermore, if $F$ is a mating of $r(H_{d+1})$ with both $f$ and $\hat{f}$ distinct polynomials with connected filled Julia set, then the first item of the definition will imply that $f$ and $\hat{f}$ have hybrid conjugate polynomial-like restrictions, which by Proposition 6 in \cite{douady-hubbard-1985} means they are conformal equivalent. The same cannot be said of the groups, though; it is easy to see that, given $r \in \overset{\circ}{\mathcal{D}}_{d+1}$, we can define another representation $\hat{r}$ of $H_{d+1}$ by setting $\rho_{\hat{r}} = \rho_r$ and $\sigma_{\hat{r}} = \chi_r\rho_r$. Then $\hat{r} \in \overset{\circ}{\mathcal{D}}_{d+1}$ and $\chi_{\hat{r}} = \chi_r$, and in particular $\Gamma(\hat{r}) = \Gamma(r)$. This means that a mating between $f$ and $r(H_{d+1})$ is also a mating between $f$ and $\hat{r}(H_{d+1})$, and vice-versa, even though $r$ and $\hat{r}$ are not conjugate representations.

To perform surgery between $f$ and $r(H_{d + 1})$, we need to find ''fundamental domains'' for the respective dynamical systems, on the boundaries of which their actions are compatible --- in our case, the domains will be annuli, and the action will be a $d:1$ covering of the inner boundary onto the outer boundary. Such a domain is easy to construct for the map $f$: simply take any polynomial-like restriction $f|_{U'}: U' \to U$ of $f$ and consider $A := U\setminus \overline{U'}$. To make things more explicit, and because this will be useful in the next section, we will choose a precise polynomial-like restriction. Recall that $f$ admits a \textit{B\"{o}ttcher coordinate} $\varphi$, i.e. a map defined in a neighborhood of infinity that conjugates the actions of $f$ and $z^d$. Since we assume that $K_f$ is connected, this map actually extends to an isomorphism $\varphi: \C\setminus K_f \to \C\setminus\overline{\D}$. Thus, fixing any $t > 1$, the sets
\[ U := \varphi^{-1}(\A(1, t^d))\cup K_f, \]
\[ U' := f^{-1}(U) = \varphi^{-1}(\A(1, t))\cup K_f, \]
\[ A := U\setminus \overline{U'} = \varphi^{-1}(\A(t, t^d)), \]
where $\A(t_1, t_2) := \{ z \in \C \ | \ t_1 < |z| < t_2 \}$, satisfy:
\begin{itemize}
    \item $U, U'$ are topological disks, with $U' \Subset U$ --- i.e. $\overline{U'} \subset U$;
    \item the restriction $f|_{U'}: U' \to U$ is a proper holomorphic map of degree $d$;
    \item $A$ is a topological annulus;
    \item if $\partial_i A$ denotes its inner boundary (the one it shares in common with the bounded connected component of $\C\setminus \overline{A}$) and $\partial_o A$ its outer boundary (the one shared in common with the unbounded component), then $f|_{\partial_i A}: \partial_i A\to \partial_o A$ is a $d:1$ covering map.
\end{itemize}

\begin{figure}
    \centering
    \includegraphics[scale = 0.6]{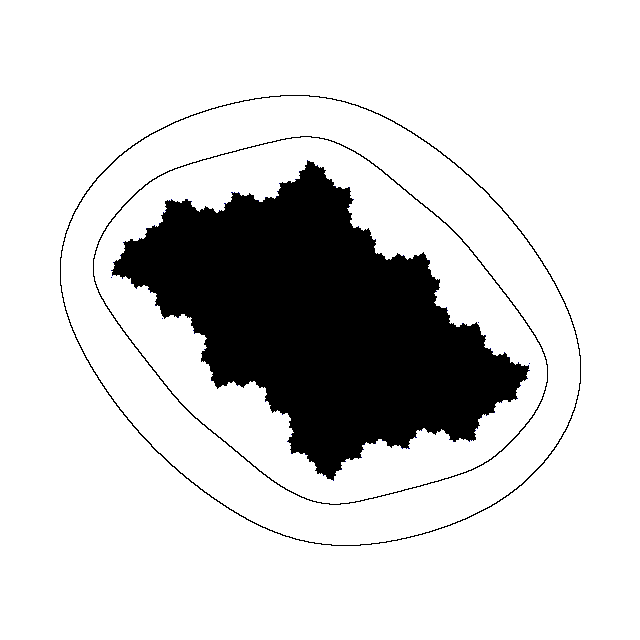}
    \caption{A polynomial-like restriction of the polynomial $f$.}
    \label{fig:polylike_restriction}
\end{figure}

The first two items are exactly what it means for the restriction $f|_{U'}: U' \to U$ to be a polynomial-like map (see Figure \ref{fig:polylike_restriction}). We make observation of an extra property of this choice which will be useful for the construction:
\begin{itemize}
    \item the map
    \[ j: \varphi^{-1}\left( t^de^{it} \right) \mapsto \varphi^{-1}\left( t^de^{-it} \right) \]
    is an orientation-reversing involution of $\partial_o A$ that fixes two points (namely $\varphi^{-1}(t^d)$ and $\varphi^{-1}(-t^d)$, associated with the external rays of arguments $0$ and $\pi$).
\end{itemize}

\textbf{Remark}: We may take $j$ any smooth orientation-reversing involution of $\partial_o A$ for the construction in this section, but canonical choices will again make things easier in the next one.

Turning our focus back to the group representation $r$, let us take $P, P'$ fixed points of $\rho_r$, $Q, Q'$ fixed points of $\sigma_r$, $R, R'$ fixed points of $\chi_r\rho_r$ and $S, S'$ fixed points of $\chi_r\sigma_r$. We also fix smooth curves $\ell, m, n$ connecting $P$ to $R$, $Q$ to $S$ and $R$ to $S$, respectively, in such a way that their projections in the quotient space $\Sigma = \Omega(r)/\Gamma(r)$ do not intersect (also excluding self-intersections). Then the curves $\rho_r(\ell), \sigma_r(m), \chi_r(n)$ connect $P$ to $\rho_rR = \chi_rR$, $Q$ to $\sigma_rS = \chi_rS$, and $\chi_rR$ to $\chi_rS$, respectively. This means that all six curves together bound a domain $\Delta'$ which is a fundamental domain for $r(H_{d+1})$; see Figure \ref{fig:hecke_fundamental_domain}.

\begin{figure}
    \centering
    \includegraphics[scale = 1]{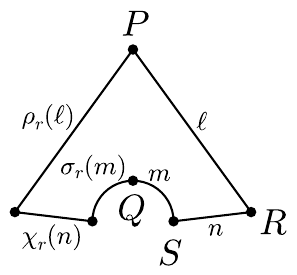}
    \caption{The fundamental domain $\Delta'$ of $r(H_{d+1})$.}
    \label{fig:hecke_fundamental_domain}
\end{figure}

The basis of our ''fundamental annulus'' will be the domain
\[ \Delta := \Delta'\cup \rho_r(\Delta')\cup \dots \cup \rho_r^d(\Delta'). \]
If we quotient $\Delta$ by the action of $\Gamma(r)$, the sides of this domain are identified to each other in several ways, but in particular we have that:
\begin{itemize}
    \item the sides $\chi_r(n), \rho_r(n)$, which are adjacent sides of $\Delta$, are identified under $\chi_r$ with the sides $n, \chi_r\rho_r(n) = \rho_r^d\chi_r(n)$, respectively, which are also adjacent;
    \item the sides $\rho_r^j(n)$, $1 < j \leq d$, are identified under $\rho_r^{2j - 1}\chi_r$ to the sides $\rho_r^{j-1}\chi_r(n)$ --- indeed $\rho_r^{2j - 1}\chi_r = \rho_r^{j-1}\chi_r\rho_r^{-j}$.
\end{itemize}
Identifying only these sides under the appropriate maps turns $\Delta$ into an annulus $B$ --- see Figure \ref{fig:hecke_fundamental_annulus}, whose outer boundary $\partial_o B$ is comprised of the sides $m$ and $\sigma_rm$, while its inner boundary $\partial_i B$ is comprised of the copies of these sides under different powers of $\rho_r$. This means that, when we glue together the actions of all powers of $\rho_r$ on $B$, we obtain a correspondence $g$ whose action along the inner boundary $g|_{\partial_i B}: \partial_i B \to \partial_o B$ is a $d:1$ covering; notice that $g|_{\partial_i B}$ is piecewise smooth. Also, by gluing together the actions of $\sigma_r$ and $\chi_r\sigma_r$ on $\partial_o B$, we again obtain an orientation-reversing involution, which we will also denote $\sigma_r$, fixing the points $Q$ and $S$.

\begin{figure}
    \centering
    \includegraphics[width = 0.95\linewidth]{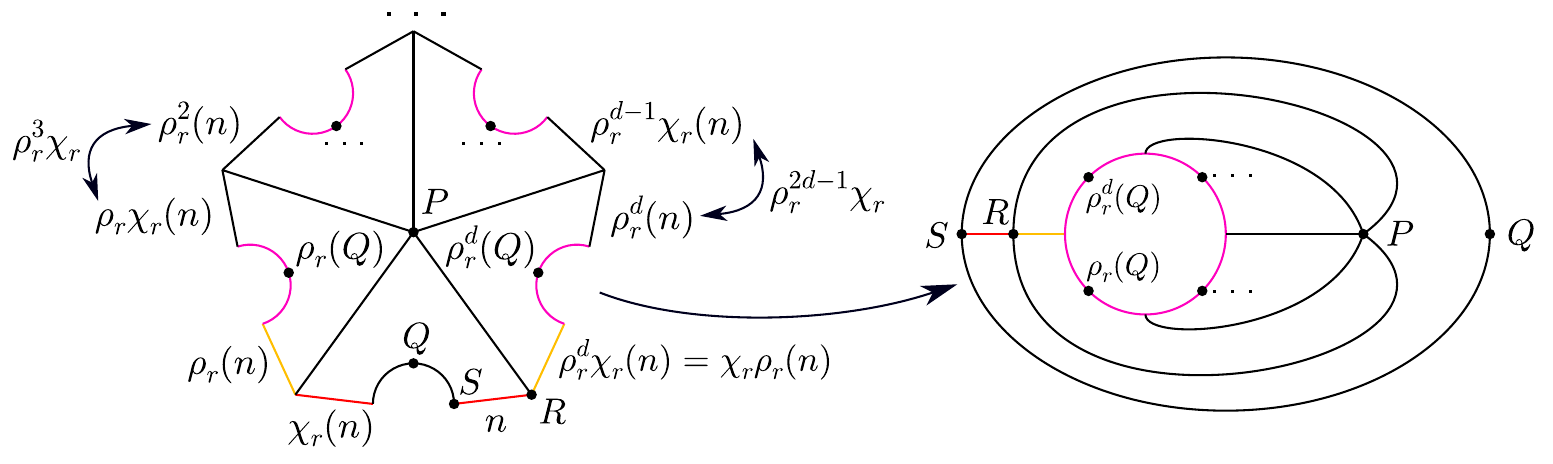}
    \caption{Construction of the ''fundamental annulus'' $B$ for the group representation. The red and yellow sides are identified with each other under the action of $\chi_r$, while the pink segments form the inner boundary; since they are copies of the outer boundary under iterates of $\rho_r$, the map $g$ above acts as a $d:1$ cover.}
    \label{fig:hecke_fundamental_annulus}
\end{figure}

\textbf{Remark}: In the case $d = 2$, all of the above identifications are realized via the element $\chi_r$. This means that $B$ is the image of $\Delta$ on the quotient $\overline{\C}/\left< \chi_r \right>$, the map $g$ is simply the action of the correspondence defined by the two powers of $\rho_r$, restricted to $\partial_i B$, and $\sigma_r$ induces a globally defined involution, restricting to an orientation-reversing involution of $\partial_o B$.

To glue the two annuli together, we will fix a quasiconformal homeomorphism $h: A \to B$ that conjugates $f|_{\partial_i A}: \partial_i A \to \partial_o A, j: \partial_o A \to \partial_o A$ with $g|_{\partial_i B}: \partial_i B \to \partial_o B, \sigma_r: \partial_o B \to \partial_o B$ --- this can be done by starting with any diffeomorphism between $\partial_o A$ and $\partial_o B$ that conjugates $j$ and $\sigma_r$, then lifting to the inner boundaries via $f|_{\partial_i A}$ and $g$, and interpolating inside $A$.

We will now add some extra steps compared to the original construction, so that our work in the next section becomes easier. First, let us embed $B$ into a disk $D$, making $\partial_oB = \partial D$, and preserving the complex structure --- i.e. the structure on $D$ induces the one on $B$. The map $\sigma_r$ then is still a smooth orientation-reversing involution on $\partial D$. We can then take another copy $\tilde{D}$ of $D$ and glue both of them together along the boundary via the map $\sigma_r$ to get a sphere. The obvious extension of $\sigma_r$ to that sphere, which swaps the points of $D$ and $\tilde{D}$, will be a smooth orientation-preserving involution, that restricts to an orientation-reversing involution over the common boundaries between the two disks. For ease of notation, we will simply denote this sphere as $\overline{\C}$, and the embedded annulus still as $B$. Let us also define the sets $D' := D\setminus \overline{B}$, $\tilde{D}' := \tilde{D}\setminus \sigma_r(\overline{B}) = \sigma_r(D')$, and $\tilde{B} := \tilde{D}\setminus \overline{\tilde{D}'} = \sigma_r(B)$. Note that the previously constructed map $h: A \to B$ is still a quasiconformal homeomorphism between these annuli, and thus extends to a quasiconformal homeomorphism $h: U \to D$ such that $h(U') = D'$. We remark that this extension only conjugates dynamics on the boundaries of the fundamental annuli. Let us then define a correspondence $G$ by prescribing the following branches:
\begin{itemize}
    \item $G$ is the conjugated map $h\circ f\circ h^{-1}: D' \to D$ ($d:1$) on $D'$;
    \item $G$ is the conjugated inverse correspondence $\sigma_r\circ h\circ f^{-1}\circ h^{-1}\circ \sigma_r: \tilde{D} \to \tilde{D}'$ ($1:d$) on $\tilde{D}$;
    \item $G$ is the correspondence $\sigma_r\circ \Cov_0^{h\circ f\circ h^{-1}}: D' \to \tilde{D}'$ ($d-1:d-1$) on $D'$ --- i.e. it maps a point $z \in D'$ to the counterparts in $\tilde{D}'$ of all the other points of $D'$ that have the same image under $h\circ f\circ h^{-1}$ as $z$;
    \item $G$ is the correspondence $\sigma_r\circ g: B \to \tilde{B}$ ($d:d$) on $B$.
\end{itemize}

\begin{figure}
    \centering
    \includegraphics[scale = 0.6]{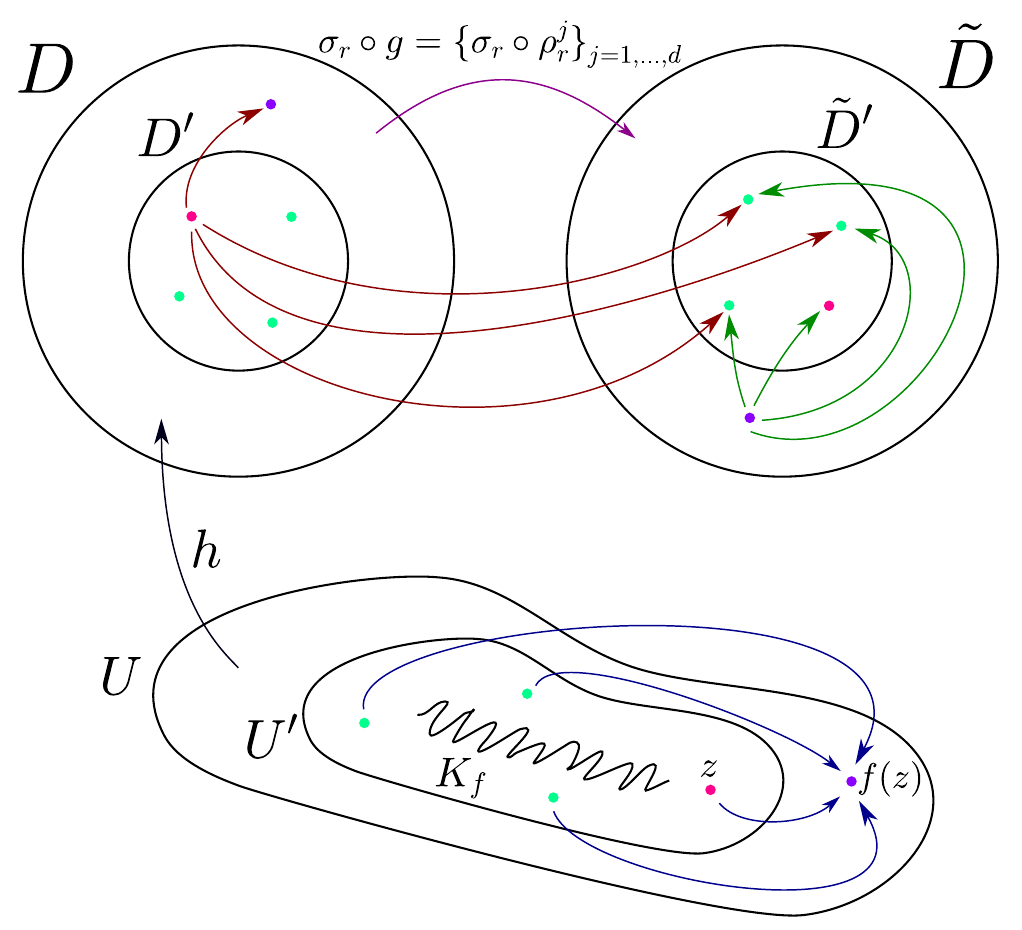}
    \caption{The construction of the topological mating $G$ between the polynomial $f$ and the group $r(H_{d+1})$. The points of same color are associated under either $h$ or $\sigma_r$. The blue arrows show the action of $f$, the red arrows the branches of $G$ on $D'$, the green arrows the branches of $G$ on $\tilde{D}$, and the purple arrow the action of $G: A \to \tilde{A}$.}
    \label{fig:topological_mating_construction}
\end{figure}

It is clear that $G$ is a quasiregular correspondence, and so, to complete the construction, we need only to find a $G$-invariant Beltrami form $\mu$. We shall set
\[ \mu(z) = \begin{cases} \mu_0(z) = 0 & \text{ if } z \in B \iff h^{-1}(z) \in A; \\ (h\circ f^n\circ h^{-1})^\ast\mu(z) & \text{ if } f^n(h^{-1}(z)) \in A; \\ h_\ast\mu_0(z) = (h^{-1})^\ast\mu_0(z) & \text{ if } h^{-1}(z) \in K_f; \\ \sigma^\ast\mu(z) & \text{ if } z \in \tilde{D}. \end{cases} \]
It is clear that $\mu$ is $G$-invariant. If it satisfies $\| \mu \|_\infty < 1$, by what was discussed in the previous section, any integrating map $\phi$ of $\mu$ will be a quasiconformal conjugacy between $G$ and an actual holomorphic correspondence $F$. Notice that $\phi$ also conjugates $\sigma_r$ with a conformal involution $J$, since $\sigma_r$ is quasiconformal and also leaves $\mu$ invariant.

\begin{proof}[Proof of Theorem \ref{main.surgery}]

    The first step is to show that the coefficient $\mu$ constructed above satisfies $\| \mu \|_\infty < 1$. Indeed, that is obvious on the annulus $B$. Since $h$ is $K$-quasiconformal for some $K \geq 1$, and $f$ is holomorphic, we get
    \[ |\mu(z)| \leq \begin{cases} (K - 1)/(K + 1) & \text{ if } h^{-1}(z) \in K_f; \\ (K^2 - 1)/(K^2 + 1) & \text{ if } f^n(h^{-1}(z)) \in A. \end{cases} \]
    Finally, $\sigma_r$ is smooth over the whole sphere, and in particular quasiconformal, concluding that $\| \mu \|_\infty < 1$ as desired.
    
    Now, letting $\phi$ be some integrating map of $\mu$ and $F := \phi\circ G\circ \phi^{-1}$ be the straightening of the topological mating $G$, we see that the restriction $F|_{\phi(D')} = \phi\circ h\circ f\circ h^{-1}\circ \phi^{-1}: \phi(D') \to \phi(D)$ is a polynomial-like restriction of $F$, quasiconformally conjugate to $f: U' \to U$ under the map $\phi\circ h$. By the way we defined $\mu_0$, we also have
    \[ (\phi\circ h|_{K_f})^\ast\mu_0 = (h|_{K_f})^\ast(\phi|_{h(K_f)})^\ast\mu_0 = (h|_{K_f})^\ast\mu|_{h(K_f)} = \mu_0, \]
    showing that $\phi\circ h$ is actually a hybrid conjugacy between the two polynomial-like restrictions. That the action of $F$ on $\phi(\tilde{D})$ is hybrid conjugate to that of the inverse of $f$ on $U$ follows immediately from the relationship between $F$ and $J$. We thus must have $\Lambda := \phi\circ h(K_f)\cup J\circ\phi\circ h(K_f)$. Let us set $\Omega := \overline{\C}\setminus \Lambda$. Since $\mu = \mu_0$ on $B$, $\phi$ is conformal on $B$ and therefore the quotients $\Omega/F$ and $B/G$ are biholomorphic. Since the action of $G$ on $B$ is just the action of $\Gamma(r)$ on $B$, we conclude that $\Omega/F$ is biholomorphic to $\Omega(r)/\Gamma(r)$. This then concludes that $F$ is a mating between $f$ and $r(H_{d+1})$.
    
\end{proof}

This construction allows us to also better understand the structure of the mating $F$.

\begin{prop}\label{prop.classify_matings}

    If $F$ is a mating between a polynomial $f$ of degree $d$ and $r(H_{d + 1})$, $r \in \overset{\circ}{\mathcal{D}}_{d + 1}$, as constructed above, then $F$ is conjugate to a correspondence of the form $J\circ \Cov_0^q$ for some involution $J$ and some degree $d + 1$ polynomial $q$.
    
\end{prop}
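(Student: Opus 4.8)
The plan is to reduce the statement to the classification of covering correspondences provided by Lemma \ref{lema.equivalence_relation_covering}. Recall that $F = \phi\circ G\circ \phi^{-1}$ and $J = \phi\circ \sigma_r\circ \phi^{-1}$, where $\phi$ is holomorphic along branches, and that $J$ is an involution, so $J\circ J = \Diag$. Hence proving $F = J\circ \Cov_0^q$ is equivalent to proving $J\circ F = \Cov_0^q$, and since $J\circ F = \phi\circ(\sigma_r\circ G)\circ \phi^{-1}$, it suffices to show that $\sigma_r\circ G$ is conjugate to a deleted covering correspondence. I would therefore set $C := J\circ F$ and $\hat{C} := C\cup \Diag$, so that the goal becomes: $\hat{C}$ is a holomorphic equivalence relation. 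Once this is established, the Lemma yields $\hat{C} = \Cov^q$, whence $C = \hat{C}\setminus \Diag = \Cov_0^q$ and $F = J\circ C = J\circ \Cov_0^q$.

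First I would record holomorphicity: $F$ is holomorphic by the straightening in Theorem \ref{main.surgery}, and $J$ is a conformal involution, so $C = J\circ F$ and $\hat{C}$ are holomorphic. Reflexivity of $\hat{C}$ is built in. For symmetry it is enough to check, branch by branch, that $\sigma_r\circ G$ is its own inverse. On $B$ one has $\sigma_r\circ G = g$, which is symmetric because it is the $\langle \rho_r\rangle$-orbit relation. On $D'$ one gets the deleted covering $\Cov_0^{h\circ f\circ h^{-1}}$ (symmetric by definition) together with the single-valued branch $z\mapsto \sigma_r(h\circ f\circ h^{-1}(z))\in \tilde{D}$, whose reverse is exactly the $1:d$ branch arising on $\tilde{D}$. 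Conjugating by $\phi$ transports this to $C = C^{-1}$.

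The heart of the argument is transitivity, i.e. that $\hat{C}$ partitions $\overline{\C}$ into classes of fixed size $d+1$. I would compute the $\hat{C}$-class of a generic point on each side. For $z\in D'$ the class is the $h\circ f\circ h^{-1}$-fiber through $z$ (which has $d$ points in $D'$) together with the single counterpart $\sigma_r(h\circ f\circ h^{-1}(z))\in \tilde{D}$; for $z\in B$ the class is the full orbit $\{ z, \rho_r z,\dots, \rho_r^d z \}$ making up the fundamental annulus. Each description is self-consistent, so transitivity holds separately on the group side and the polynomial side, and the one thing to verify is that the two descriptions agree along the common seam $\partial D' = \partial_i B$. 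This is precisely the compatibility forced by the choice of the gluing homeomorphism $h$, which was built to conjugate $f|_{\partial_i A}$ with the $d:1$ covering $g|_{\partial_i B}$. With transitivity established, Lemma \ref{lema.equivalence_relation_covering} gives $\hat{C} = \Cov^q$ for a rational $q$, and counting fibers (size $d+1$) shows $\deg q = d+1$.

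It remains to see that $q$ may be taken to be a genuine polynomial. Since $\Cov^q$ is unchanged by post-composition of $q$ with M\"{o}bius maps, and the correspondence is only defined up to conformal conjugacy, it suffices to exhibit a totally ramified point for $q$: the fixed point $P$ of $\rho_r$ has $\langle \rho_r\rangle$-orbit $\{ P \}$, so its $\hat{C}$-class is a single point and $P$ is totally ramified; normalizing by a M\"{o}bius map so that $P$ becomes a totally ramified fixed point presents $q$ as conjugate to a degree $d+1$ polynomial, which is the asserted form. The main obstacle I anticipate is the transitivity/well-definedness step along the seam, where one must confirm that the $h\circ f\circ h^{-1}$-fiber and the $\rho_r$-orbit glue into a single class across $\partial_i B$; this is exactly where the careful construction of $h$ and the matching of the two $d:1$ boundary coverings do the essential work.
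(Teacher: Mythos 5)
Your overall skeleton matches the paper's: reduce the statement to showing that $\hat{C} := (J\circ F)\cup \Diag$ is a holomorphic equivalence relation, invoke Lemma \ref{lema.equivalence_relation_covering} to get $\hat{C} = \Cov^q$, and then use the completely invariant point coming from the fixed point $P$ of $\rho_r$ to conclude that $q$ may be normalized to a degree $d+1$ polynomial; this last step of yours is essentially identical to the paper's. The difference is in how the equivalence-relation property is established, and that is where your proposal has a genuine gap. The paper verifies the property \emph{only} on the open annulus $\phi(B)$, where $J\circ F$ visibly acts as the group identifications, and then extends it to all of $\overline{\C}$ ``by analyticity'': since $\hat{C}$ is an analytic (indeed algebraic) subset of $\overline{\C}\times\overline{\C}$, symmetry ($\hat{C} = \hat{C}^{-1}$) and transitivity ($\hat{C}\circ\hat{C} \subseteq \hat{C}$) are identities between analytic sets, so holding on an open piece of the curve they hold everywhere. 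You instead attempt a direct, global, branch-by-branch verification, and you correctly flag the seam $\partial D' = \partial_i B$ (and implicitly the outer seam $\partial D$ and the grand orbits of both) as the crux --- but your claim that agreement there is ``precisely the compatibility forced by the choice of $h$'' is not justified, and it is in fact the hard point.

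Two concrete problems. First, transitivity is not preserved under taking closures (symmetry is): your computation of classes describes the relation only over the open pieces $D'$, $B$, $\tilde{D}$ minus the seam circles, but $\hat{C}$ is a closed correspondence, hence contains all limit pairs over the seams, and an equivalence relation on a dense open set need not close up to one. Second, those limit pairs do not match in the naive way: approaching a seam point $x \in \partial_i B$ from the $D'$ side, the branch $h\circ f\circ h^{-1}$ limits to the pair $(x,\, g|_{\partial_i B}(x))$ (using exactly the conjugacy property of $h$), whereas approaching from the $B$ side, the branch $\sigma_r\circ g$ limits to $(x,\, \sigma_r(g|_{\partial_i B}(x)))$; the second coordinates differ by the boundary involution $\sigma_r$ on $\partial D$, so the two one-sided class descriptions at the seam do \emph{not} coincide on the nose, and reconciling them requires a finer analysis of the boundary identifications than the conjugacy property of $h$ provides. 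This bookkeeping is precisely what the paper's analyticity step is designed to bypass: one never needs to know what $\hat{C}$ looks like over the seams. To repair your argument, keep your verification on the open annulus (that alone suffices, as in the paper) and replace the seam discussion by the observation that $\hat{C}$ is algebraic, so the equivalence-relation identities propagate from any open set to the whole sphere by the identity principle.
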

\begin{proof}

    As observed in the proof of Theorem \ref{main.surgery} above, the straightening $\phi$ of the topological mating $G$ conjugates $G$ with a true mating $F$, and the smooth involution $\sigma_r$ with an analytic involution $J$ on $\overline{\C}$. We argue that the correspondence $J\circ F$ has to be a deleted covering correspondence. Indeed, notice that the action of $(\sigma_r\circ G)\cup \Diag$ on the annulus $B$ is an equivalence relation; since $\phi$ must conjugate $\Diag$ with itself, we get that $(J\circ F)\cup \Diag$ acts as an equivalence relation on $\phi(B)$. By analyticity, this implies that $(J\circ F)\cup \Diag$ is an equivalence relation on the whole sphere $\overline{\C}$. In particular, by Lemma \ref{lema.equivalence_relation_covering}, it is a covering correspondence, defined by some rational map $q$, and we get that $J\circ F = \Cov_0^q$. Notice now that the point $P \in B$ is completely invariant under the action of $\sigma_r\circ G$. This means that, under an appropriate conjugation, $J\circ F$ has a completely invariant point at infinity, which forces $q$ to have a completely invariant point at infinity, i.e. $q$ is a polynomial.
    
\end{proof}

\textbf{Remark}: As observed above, the fact that $P$ is fixed for $\Cov_0^q$ forces it to be a critical point of $q$ (and in this instance, it is actually fully invariant). But the point $R$ is also fixed by $\Cov_0^q$ (since it acts as $\rho_r$, $\rho_r(R) = \chi_r(R)$, and we've identified $R$ with $\chi_r(R)$ when creating the annulus $B$). But the orbit of $R$ still has $d$ distinct points, and therefore it must be a simple critical point of $q$. In particular, $q$ actually cannot be a polynomial without simple critical points either.

\section{Regularity of the mating process}

For this section, let us fix $\mathbf{f} = (f_\lambda)_{\lambda \in \Lambda}$ an analytic family of polynomials, $\Lambda$ a complex manifold. To prove Theorems \ref{main.analyticity} and \ref{main.continuity}, we will begin using classical Douady-Hubbard methods to show the continuity of the process in $J$-stable components; as observed in Section 1, that means the interior of $M_\mathbf{f}$. Analyticity will follow from the philosophy that the quasiconformal conjugacy class of a correspondence that is a mating between a polynomial and a representation of Hecke group is analytically ''parametrized'' by choices of invariant Beltrami coefficients supported on each invariant set $\Lambda, \Omega$. Finally, the continuity to the boundary will come from the compactness of quasiconformal maps, combined with the hypothesis of rigidity. 

\subsection{Continuity along stable components}

Let us recall that $\mathcal{R}_\mathbf{f}\cap M_\mathbf{f} = \overset{\circ}{M}_\mathbf{f}$.

\begin{teo}\label{teo.continuity101}

    The map $M_\mathbf{f}\times \overset{\circ}{\mathcal{D}}_{d + 1} \to \mathcal{C}_d$ sending the pair $(\lambda, r)$ to (the conjugacy class of) the mating between $f_\lambda$ and $r(H_{d+1})$ is continuous on the set $\overset{\circ}{M}_\mathbf{f}\times \overset{\circ}{\mathcal{D}}$.
    
\end{teo}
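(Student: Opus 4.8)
The plan is to prove continuity on $\overset{\circ}{M}_\mathbf{f}\times \overset{\circ}{\mathcal{D}}$ by exploiting the surgery construction itself and showing that all the data entering it vary continuously with the parameters $(\lambda, r)$. The correspondence $F_{(\lambda,r)}$ is obtained from the topological mating $G_{(\lambda,r)}$ by straightening an invariant Beltrami form $\mu_{(\lambda,r)}$; since the target space $\mathcal{C}_d$ carries the topology of uniform (Hausdorff) convergence of correspondences, the strategy is to show that if $(\lambda_n, r_n)\to (\lambda, r)$, then $F_{(\lambda_n,r_n)}$ converges uniformly to $F_{(\lambda,r)}$. Because holomorphic correspondences are algebraic (defined by a separately-homogeneous polynomial $T$), uniform convergence reduces to convergence of the coefficients of $T$, which is what I ultimately want to extract.

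First I would set up the continuity of each ingredient of the surgery. On the group side, as $r$ varies in $\overset{\circ}{\mathcal{D}}_{d+1}$, the generators $\rho_r, \sigma_r, \chi_r$ depend analytically (hence continuously) on the cross-ratio parameter, and the fundamental domain $\Delta'$, the annulus $B$, and the gluing involution $\sigma_r$ can be chosen to vary continuously as well. On the polynomial side, this is where $J$-stability is essential: over a connected component of $\mathcal{R}_\mathbf{f}\cap M_\mathbf{f} = \overset{\circ}{M}_\mathbf{f}$, the Mañé--Sad--Sullivan theory gives a holomorphic motion of neighborhoods of the Julia sets conjugating the $f_\lambda$. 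Using this motion, I would arrange that the Böttcher coordinate $\varphi_\lambda$, the polynomial-like restriction $f_\lambda\colon U'_\lambda\to U_\lambda$, the annulus $A_\lambda$, the boundary involution $j_\lambda$, and hence the quasiconformal gluing map $h_{(\lambda,r)}\colon A_\lambda\to B_r$ all vary continuously in the appropriate sense. The careful canonical choices made in Section 3 (the explicit $\varphi$-based restriction and the explicit involution $j$) are precisely what make these continuity statements clean rather than merely existential.

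With the ingredients in place, I would then invoke continuity of the measurable Riemann mapping theorem. The invariant Beltrami forms $\mu_{(\lambda,r)}$ are built by pulling back $\mu_0$ under the iterated dynamics of $h\circ f\circ h^{-1}$ and extending by $\sigma_r$, with uniform control $\|\mu_{(\lambda,r)}\|_\infty \le (K^2-1)/(K^2+1) < 1$ where $K$ is a local bound on the dilatation of $h_{(\lambda,r)}$. The holomorphic-motion machinery over a $J$-stable component yields a uniform $K$ on compact subsets, so the dilatations stay bounded away from $1$ locally; moreover $\mu_{(\lambda_n,r_n)}\to\mu_{(\lambda,r)}$ in an $L^1_{\mathrm{loc}}$ (equivalently, pointwise a.e. with a uniform bound) sense. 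By the continuous dependence of normalized solutions of the Beltrami equation on the coefficient, the integrating maps $\phi_{(\lambda_n,r_n)}$ converge locally uniformly to $\phi_{(\lambda,r)}$, and therefore the straightened correspondences $F_{(\lambda_n,r_n)} = \phi\circ G\circ\phi^{-1}$ converge uniformly. Passing to $\mathcal{C}_d$, which only remembers conformal conjugacy classes, preserves this convergence, giving the desired continuity.

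The main obstacle I expect is verifying that the Beltrami forms genuinely converge with a \emph{uniform} sub-unit bound on the whole sphere, and not merely on each piece separately. The delicate points are the interface between the dynamically-defined region (where $\mu$ is grown by the backward orbit of $A_\lambda$) and the set $h(K_f)$, together with the doubling across $\partial D$ by $\sigma_r$: one must ensure the pulled-back dilatations do not degenerate near the Julia set or along the gluing curve as the parameters move. Controlling this requires that the holomorphic motion over the stable component extends to neighborhoods of the Julia sets with dilatations bounded uniformly on compacta, which is exactly the content of the Mañé--Sad--Sullivan structure of $\overset{\circ}{M}_\mathbf{f}$; the continuity of $h$ up to the boundary behavior of the annuli is the technically most demanding verification, and it is where the streamlined choices from Section 3 pay off.
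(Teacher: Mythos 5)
Your proposal mirrors the paper's overall architecture: holomorphic motions of the fundamental annuli on both the polynomial and the group side, the induced family of invariant Beltrami coefficients $\mu_{\lambda,r}$, the uniform bound $\|\mu_{\lambda,r}\|_\infty<1$, and Ahlfors' theorem that normalized integrating maps depend continuously on the coefficient in $L^1$. However, there is a genuine gap at precisely the step the paper spends most of its proof on. You assert that $\mu_{(\lambda_n,r_n)}\to\mu_{(\lambda,r)}$ in $L^1$, ``equivalently, pointwise a.e.\ with a uniform bound,'' but this is neither proved nor straightforward. On the region tiled by the backward orbit of $A_\lambda$, the coefficient is the pullback of the merely measurable ($L^\infty$) coefficient $\mu_r$ under the maps $h_\lambda\circ f_\lambda^n\circ h_\lambda^{-1}$, which move with the parameter; pullbacks of $L^\infty$ functions under varying maps need not converge pointwise a.e.\ (already precomposition with a small translation can fail to converge a.e.), so the proposed reduction to dominated convergence does not go through. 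What one can establish is $L^1$-continuity on each piece of fixed depth $n$; the real difficulty is summing over the infinitely many pieces accumulating on the Julia set, i.e.\ interchanging the limit in the parameter with the limit in $n$. This requires a tail estimate that is \emph{uniform in $\lambda$}: in the paper this is the truncation $\mu^{(n)}_{\lambda,r}$ together with the estimate that $\Area\bigl(f_\lambda^{-n}(U''_\lambda)\setminus K_\lambda\bigr)\to 0$ uniformly on a neighborhood of $\lambda_0$, proved by rewriting these areas as integrals of $\|D\tau_\lambda\|^2$ against the Ma\~n\'e--Sad--Sullivan motion $\tau$ and invoking that a decreasing sequence of plurisubharmonic functions converging pointwise to $0$ converges locally uniformly. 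Nothing in your proposal plays this role.

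Relatedly, your ``main obstacle'' paragraph locates the difficulty in the wrong place. The uniform sub-unit bound on $\|\mu_{\lambda,r}\|_\infty$ is the easy part: since $f_\lambda$ is holomorphic, pulling back under $h_\lambda\circ f_\lambda^n\circ h_\lambda^{-1}$ costs the dilatation of $h_\lambda$ only twice, giving $(K^2-1)/(K^2+1)$ independently of $n$, so no degeneration near the Julia set or along the gluing curve can occur, and no appeal to $J$-stability is needed for this. Indeed, as the paper's remark after the proof shows, for each \emph{fixed} $\lambda\in M_\mathbf{f}$ (interior or not) the mating is already continuous in $r$; $J$-stability enters only to make the tail area estimate uniform in $\lambda$, via the quasiconformal motion of neighborhoods of Julia sets with uniformly bounded distortion. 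That uniformization of the tail, not the $L^\infty$ bound, is the content your argument is missing.
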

\begin{proof}

    Fix $\lambda_0 \in \overset{\circ}{M}_\mathbf{f}$ and $r_0 \in \overset{\circ}{\mathcal{D}}_{d + 1}$. We will begin by finding holomorphic motions of the fundamental domains involved in the surgery. For $\lambda \in M_\mathbf{f}$, let
    \[ \varphi_\lambda: \C\setminus K_\lambda \to \C\setminus \overline{\D} \]
    be the B\"{o}ttcher coordinate for $f_\lambda$. Recall that these maps depend analytically on $\lambda$, since this is true for their germs at infinity. We fix $t > 0$ and take
    \[ U_\lambda := \varphi_\lambda^{-1}(\A(1, t^d))\cup K_\lambda, \]
    \[ U_\lambda' := f_\lambda^{-1}(U_\lambda) = \varphi_\lambda^{-1}(\A(1, t))\cup K_\lambda, \]
    \[ A_\lambda := U_\lambda\setminus \overline{U_\lambda'} = \varphi_\lambda^{-1}(\A(t, t^d)). \]
    Notice that the set $A_\lambda$ is actually well defined for every $\lambda$ in a neighborhood $\mathcal{U}$ of $M_\mathbf{f}$, since the domain of definition of $\varphi_\lambda$ will still include the annulus $\A(t, t^d)$. For ease of notation, let $A_{\lambda_0} =: A_0$. We thus have the following holomorphic motion of $A_0$ along $\mathcal{U}$, based at $\lambda_0$:
    \begin{align*}
        \psi: A_0\times \mathcal{U} & \to \mathbb{C} \\
        (z, \lambda) & \mapsto \varphi_\lambda^{-1}\circ \varphi_{\lambda_0}(z).
    \end{align*}
    Notice that $\psi$ maps the fundamental annulus $A_0$ to the fundamental annulus $A_\lambda$ for every $\lambda \in \mathcal{U}$, conjugating the action of $f_{\lambda_0}|_{\partial_i A_0}: \partial_i A_0 \to \partial_o A_0$ to that of $f_\lambda|_{\partial_i A_\lambda}: \partial_i A_\lambda \to \partial_o A_\lambda$, and the action of $j_0: \partial_o A_0 \to \partial_o A_0$ to that of $j_\lambda: \partial_o A_\lambda \to \partial_o A_\lambda$ (defined in the same way).

    For a group representation $r \in \overset{\circ}{\mathcal{D}}$, let again $\rho_r, \sigma_r$ be the generators of orders $d + 1$ and $2$ of $r(H_{d+1})$, respectively, and $\chi_r$ be the anti-commuting involution. For $r_0$, we also set $\rho_{r_0} =: \rho_0, \sigma_{r_0} =: \sigma_0, \chi_{r_0} =: \chi_0$. If we let $P_0, Q_0, R_0, S_0$ be choices of fixed points of $\rho_0, \sigma_0, \chi_0\rho_0, \chi_0\sigma_0$, then there is a neighborhood $\mathcal{V} \subseteq \overset{\circ}{\mathcal{D}}_{d + 1}$ of $r_0$ in which they vary holomorphically, i.e. we can find holomorphic functions $P,Q,R,S: \mathcal{V} \to \overline{\C}$ such that $P(r) =: P_r, Q(r) =: Q_r, R(r) =: R_r, S(r) =: S_r$ are fixed points of $\rho_r, \sigma_r, \chi_r\rho_r, \chi_r\sigma_r$, respectively. These maps define then a holomorphic motion of the finite set $\{P_0, Q_0, R_0, S_0\}$ along $\mathcal{V}$, based at $r_0$. By Slodkowski's theorem \cite{slodkowski-1991}, we can extend it to a holomorphic motion $\eta_1: \overline{\C}\times \mathcal{V} \to \overline{\C}$ of the whole sphere along $\mathcal{V}$, based at $r_0$.

    Fix now $\ell_0, m_0, n_0$ arcs connecting $P_0$ to $R_0$, $Q_0$ to $S_0$, and $R_0$ to $S_0$, respectively, in such a way that they do not intersect in the quotient $\Omega(r_0)/\Gamma(r_0)$. By reducing the neighborhood $\mathcal{V}$ if necessary, the curves $\eta_1(r, \ell_0), \eta_1(r, m_0), \eta_1(r, n_0)$ are arcs connecting $P_r$ to $R_r$, $Q_r$ to $S_r$, and $R_r$ to $S_r$, respectively, in such a way that they do not intersect in the quotient $\Omega(r)/\Gamma(r)$. If we set $\Delta_0'$ as the domain bounded by the curves $\ell_0, m_0, n_0, \rho_0(\ell_0), \sigma_0(m_0), \chi_0(n_0)$, we can then define a holomorphic motion of $\partial\Delta_0'$ along $\mathcal{V}$, based at $r_0$, in the following way:
    \[ \eta_2(r, z) := \begin{cases} \eta_1(r, z) & \text{ if } z \in \ell_0\cup m_0\cup n_0; \\ \rho_r(\eta_1(r, \rho_0^{-1}(z))) & \text{ if } z \in \rho_0(\ell_0); \\ \sigma_r(\eta_1(r, \sigma_0^{-1}(z))) & \text{ if } z \in \sigma_0(m_0); \\ \chi_r(\eta_1(r, \chi_0^{-1}(z))) & \text{ if } z \in \chi_0(n_0). \end{cases} \]
    Clearly $\eta_2(r, \cdot)$ sends $\partial\Delta_0'$ to some Jordan curve, conjugating the actions of $\Gamma(r_0)$ and $\Gamma(r)$. It again extends by Slodkowski to a holomorphic motion of the whole sphere along $\mathcal{V}$, based at $r_0$, which we restrict to a holomorphic motion of $\Delta_0'$, still denoted $\eta_2$. Let us call $\Delta_r'$ the image of $\Delta_0'$ under $\eta_2(r, \cdot)$; notice that $\Delta_r'$ is a fundamental domain for the action of $\Gamma(r)$.

    Finally, we will define a holomorphic motion of the domain $\Delta_0 := \Delta_0'\cup \rho_0(\Delta_0')\cup \dots\cup \rho_0^d(\Delta_0')$ along $\mathcal{V}$, based at $r_0$, by setting
    \[ \eta(r, z) := \begin{cases} \eta_2(r, z) & \text{ if } z \in \Delta_0'; \\ \rho_r(\eta_2(r, \rho_0^{-1}(z))) & \text{ if } z \in \rho_0(\Delta_0'); \\ \rho_r^{-1}(\eta_2(r, \rho_0(z))) & \text{ if } z \in \rho_0^{-1}(\Delta_0'). \end{cases} \]
    If we denote $\Delta_r$ the image of $\Delta_0$ under $\eta_r := \eta(r, \cdot)$, then $\Delta_r = \Delta_r'\cup \rho_r(\Delta_r')\cup\dots \rho_r^d(\Delta_r')$ and it still projects to an annulus $B_r$ under the identifications considered in the previous section. If $\pi: \Delta_0 \to B_0$ is the projection, the family of Beltrami coefficients
    \[ \mu_r := \pi_\ast\eta_r^\ast(\mu_0|_{\Delta_r}) \]
    is well defined (because $\eta_r$ conjugates group actions, so $\eta_r^\ast\mu_0$ is $\Gamma(r_0)$-invariant in $B_0$), and depends analytically on the parameter $r \in \mathcal{V}$.

    We can now realize the Bullett-Harvey surgery described in the previous section to obtain quasiregular correspondences $G_\lambda$ in the following way: we again take $B_0$ embedded in the sphere $\overline{\C} = D\cup \tilde{D}$, with $B_0 \subset D$, $\partial_o B_0 = \partial D$, and $\tilde{D} = \sigma_0(D)$, where $\sigma_0$ is a smooth extension of the action of $\sigma_0|_{\partial_oB_0}: \partial_oB_0 \to \partial_oB_0$; if $h: U_0 \to D$ is the quasiconformal map extending the one between $A_0$ and $B_0$ (conjugating the actions of $f_0: \partial_iA_0 \to \partial_oA_0, j_0: \partial_o A_0 \to \partial_o A_0$ with that of $g_0|_{\partial_i B_0}: \partial_iB_0 \to \partial_oB_0, \sigma_0: \partial_o B_0 \to \partial_o B_0$, where $g_0$ is the correspondence obtained from the action of $\rho_0$ on $B$), then the mating $G_\lambda$ for $\lambda \in \mathcal{U}$ can be defined in the same way as the topological mating was defined in Section 3, but using the involution $j_\lambda: \partial_o A_\lambda \to \partial_o A_\lambda$, and the map $h_\lambda := \psi_\lambda^{-1}\circ h$ --- we need to extend $\psi_\lambda$ to $U_0 := U_{\lambda_0}$, which can again be done via Slodkowski (this extension does not need to be dynamical). The standard construction straightens it into a conformal mating between $f_\lambda$ and $r_0(H_{d+1})$. To get to a mating between $f_\lambda$ and $r(H_{d+1})$ for $r \in \mathcal{V}$, all we must do is change the Beltrami form that is integrated; we thus define
    \[ \mu_{\lambda, r}(z) := \begin{cases} \mu_r(z) := \eta_r^\ast\mu_0(z) & \text{ if } z \in B_0; \\ (h_\lambda\circ f_\lambda^n\circ h_\lambda^{-1})^\ast\mu_r(z) & \text{ if } f_\lambda^n(h_\lambda^{-1}(z)) \in A_\lambda \iff f_0^n(h_0^{-1}(z)) \in A_0; \\ (h_\lambda)_\ast\mu_0(z) & \text{ if } h_\lambda^{-1}(z) \in K_\lambda; \\ \sigma_0^\ast\mu(z) & \text{ if } z \in \tilde{D}. \end{cases} \]
    Since $\mu_r$ is a Beltrami coefficient on $B_0$, $h_\lambda$ and $\sigma_0$ are quasi-conformal, and $f_\lambda$ is holomorphic, we see that $\mu_{\lambda, r}$ is also a Beltrami coefficient, i.e. $\| \mu_{\lambda, r} \|_\infty < 1$. We can then take $\phi_{\lambda, r}$ an integrating map for $\mu_{\lambda, r}$, and the conjugacy
    \[ F_{\lambda, r} := \phi_{\lambda, r}\circ G_\lambda\circ \phi_{\lambda, r}^{-1} \]
    will be a conformal mating between $f_\lambda$ and $r(H_{d+1})$ (by the same reasons as in the proof of Theorem \ref{main.surgery}).

    The correspondences $G_\lambda$ clearly depend continuously on the parameter $\lambda$, so all we need to do is show that the integrating maps $\phi_{\lambda, r}$ can be taken depending continuously on $(\lambda, r) \in \overset{\circ}{M}_\mathbf{f}\times \mathcal{V}$. By a lemma of Ahlfor's \cite{ahlfors-1966}, it is enough to show that, as $(\lambda, r) \xrightarrow{} (\lambda_0, r_0) \in \overset{\circ}{M}_\mathbf{f}\times \mathcal{V}$, one has $\mu_{\lambda, r} \xrightarrow{L^1} \mu_{\lambda_0, r_0}$ --- i.e. that the convergence occurs in $L^1$ norm. For that, let us define the ''truncated'' coefficients
    \[ \mu_{\lambda, r}^{(n)}(z) := \begin{cases} \mu_{\lambda, r}(z) & \text{ if } f_0^k(h_0^{-1}(z)) \in A_0 \text{ for some } 0 \leq k \leq n; \\ 0 & \text{ otherwise}. \end{cases} \]
    It is clear that $\mu_{\lambda, r}$ is the pointwise limit of $\mu_{\lambda, r}^{(n)}$ as $n \xrightarrow{} \infty$, and that each $\mu_{\lambda, r}^{(n)}$ depends $L^1$-continuously on $(\lambda, r)$ --- indeed, the coefficient $\mu_r := \eta_r^\ast\mu_0$ on $B_0$ depends analytically on $r$, and there are finitely many ''pieces'' $(h_\lambda\circ f_\lambda^n\circ h_\lambda^{-1})^\ast\mu_r$ on pre-images of $B_0$ that depend analytically on $\lambda$. It is then enough to show that
    \[ \mu_{\lambda, r}^{(n)} \xrightarrow{L^1} \mu_{\lambda, r} \ \text{ locally uniformly on } \overset{\circ}{M}_\mathbf{f}\times\overset{\circ}{\mathcal{D}}. \]
    From the fact that $h_0$ is quasiconformal, we find that this is equivalent to showing that
    \[ h_0^\ast\mu_{\lambda, r}^{(n)} \xrightarrow{L^1} h_0^\ast\mu_{\lambda, r} \ \text{ locally uniformly on } \overset{\circ}{M}_\mathbf{f}\times\overset{\circ}{\mathcal{D}}. \]
    We shall denote $\hat{\mu}_{\lambda, r} := h_0^\ast\mu_{\lambda, r}$ and $\hat{\mu}_{\lambda, r}^{(n)} := h_0^\ast\mu_{\lambda, r}^{(n)}$.
    
    Since $\lambda_0 \in \mathcal{R}_\mathbf{f}$, we can find a neighborhood $V_0$ of $J_{\lambda_0}$ and a holomorphic motion along $W \subseteq \mathcal{R}_\mathbf{f}$ the connected component containing $\lambda_0$, based at $\lambda_0$,
    \[ \tau: W\times V_0 \mapsto \C \]
    that conjugates $f_{\lambda_0}$ on $V_0$ with $f_\lambda$ on $V_\lambda := \tau(\lambda, V_0)$ for every $\lambda \in W$ (see \cite{mane-sad-sullivan-1983}). If we let $U''_0 := V_0\cup K_{\lambda_0}$ and $U''_\lambda := V_\lambda\cup K_\lambda$, we can find some $k \geq 0$ such that $f_{\lambda_0}^{-k}(A_0) \subset U''_0$; by continuity, we can also find a small neighborhood $\mathcal{U}' \subseteq W$ of $\lambda_0$ such that, whenever $\lambda \in \mathcal{U}'$, $f_\lambda^{-k}(A_\lambda) \subset U''_\lambda$ as well. If we show that
    \[ \Area\left( f_\lambda^{-n}(U''_\lambda)\setminus K_\lambda \right) \xrightarrow{} 0 \ \text{ uniformly on } \lambda \in \mathcal{U}', \]
    we are done, since then
    \[ \int{ \left| \hat{\mu}_{\lambda, r} - \hat{\mu}_{\lambda, r}^{(n)} \right| }\diff\Leb = \int_{f^{-n}(U''_\lambda)\setminus K_\lambda}{ |\hat{\mu}_{\lambda, r}| }\diff\Leb \leq \Area(f^{-n}(U''_\lambda)\setminus K_\lambda)\| \hat{\mu}_{\lambda, r} \|_\infty \]
    and $\| \hat{\mu}_{\lambda, r} \|_\infty < 1$. To conclude, we observe that, because $\tau$ is a holomorphic motion, and maybe after reducing the size of $\mathcal{U}'$, every $\tau_\lambda := \tau(\lambda, \cdot)$ for $\lambda \in \mathcal{U}'$ is quasiconformal with a bound on distortion that does not depend on $\lambda$. This implies that
    \[ \frac{1}{C}\| D\tau_\lambda \|^2 \leq \Jac(\tau_\lambda) \leq C\| D\tau_\lambda \|^2 \]
    for every $\lambda \in \mathcal{U}'$ and some constant $C \geq 1$ not depending on $\lambda$. This in turn implies that
    \[ \frac{1}{C}\int_{f^{-n}(U''_0)\setminus K_0}{ \| D\tau_\lambda \|^2 }\diff\Leb \leq \Area(f^{-n}(U''_\lambda)\setminus K_\lambda) \leq C\int_{f^{-n}(U''_0)\setminus K_0}{ \| D\tau_\lambda \|^2 }\diff\Leb \]
    since $\Area(f^{-n}(U''_\lambda)\setminus K_\lambda) = \int_{f^{-n}(U''_0)\setminus K_{\lambda_0}}{ \Jac(\tau_\lambda) }\diff\Leb$. This is enough to conclude the result since the functions
    \[ \lambda \mapsto \int_{f^{-n}(U''_0)\setminus K_0}{ \| D\tau_\lambda \|^2 }\diff\Leb \]
    form a decreasing sequence of plurisubharmonic functions that converge pointwise to $0$, and therefore converge locally uniformly to $0$.

\end{proof}

\textbf{Remark}: Using the same proof as before, we can actually verify that the map sending $r$ to the mating of $f_\lambda$ and $r(H_{d+1})$ is continuous for any $\lambda \in M_\mathbf{f}$. Indeed, the same steps can be repeated to show that it is enough to prove the convergence $\mu_{\lambda, r}^{(n)} \xrightarrow{L^1} \mu_{\lambda, r}$ locally uniformly on $r$, and that follows immediately from the fact that
\[ \Area(f^{-n}(U''_\lambda)\setminus K_\lambda) \xrightarrow{} 0 \]
for any $\lambda \in M_\mathbf{f}$ independently of $r$ (just not uniformly on $\lambda$).

\subsection{Proof of Theorem \ref{main.analyticity}}

We focus now on showing that the mating process is analytic in $\overset{\circ}{M}_\mathbf{f}\times \overset{\circ}{\mathcal{D}}_{d + 1}$. The idea is to first identify exactly the set where matings of representations of the Hecke group with the family $\mathbf{f}$ lie in $\mathcal{C}_d$. It will actually be easier to work with the more concrete family $\Corr_d^J$, where each element is an actual correspondence instead of an equivalence class, and the time reversing involution is the same for all of them. The proof will be preceded by a series of lemmas. Throughout, let us fix parameters $\lambda_0 \in \overset{\circ}{M}_{\mathbf{f}}$ and $r_0 \in \overset{\circ}{\mathcal{D}}_{d+1}$. We shall denote by $W$ the connected component of $\overset{\circ}{M}_{\mathbf{f}}$ containing $\lambda_0$. Fix also $J$ any involution, and $F_0 \in \Corr_d^J$ a mating between $f_{\lambda_0}$ and $r_0(H_{d+1})$.

\begin{lema}\label{lema.poly_like_stability}

    There exists a neighborhood $\mathcal{W}_1 \subset \Corr_d^J$ of $F_0$ and some open simply connected set $V \subset \overline{\C}$ such that:
    \begin{itemize}
        \item for every $F \in \mathcal{W}_1$, the set $V_F := F^{-1}(V)$ is simply connected, and $F|_{V_F}: V_F \to V$ is a polynomial-like map;
        \item $\partial V$ is $J$-invariant, containing the fixed points of $J$.
    \end{itemize}
    
\end{lema}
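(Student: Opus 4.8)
The plan is to take $V$ to be one of the two topological disks cut out by a $J$-invariant Jordan curve through the fixed points of $J$, over which $F_0$ is already polynomial-like by the mating construction, and then to deduce the statement for nearby $F$ from the Douady--Hubbard principle that the polynomial-like property is open. Concretely, writing $F_0$ in its conformal model $J\circ \Cov_0^{q_0}$ coming from Theorem \ref{main.surgery}, the straightening map carries the embedded disk $D$ (whose boundary $\partial_o B$ carries the orientation-reversing involution induced by $\sigma_{r_0}$, fixing $Q$ and $S$) to a topological disk $V$ whose boundary $\gamma := \partial V$ is a Jordan curve contained in $\Omega$, invariant under $J$, and passing through the two fixed points of $J$ (the straightened images of $Q$ and $S$). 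This yields the second bullet immediately, as it depends only on $V$ and on the fixed involution $J$. Using the branch description of $F_0$ --- the $d{:}1$ polynomial branch $D'\to D$, the $(d-1){:}(d-1)$ branch $D'\to \tilde D'$, the group branch $B\to \tilde B$, and the inverse branch on $\tilde D$ --- one checks that the only points whose $F_0$-image meets $V$ are those of $V_{F_0}:=D'$; hence $F_0^{-1}(V)=V_{F_0}\Subset V$, and $z\mapsto F_0(z)\cap V$ is a single-valued proper holomorphic map of degree $d$ onto $V$, namely the polynomial-like branch hybrid conjugate to $f_{\lambda_0}$.

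Next I would establish stability. Since $J$ is fixed, convergence $F\to F_0$ in $\Corr_d^J$ forces the corresponding maps $q_F\to q_0$ (after normalizing the harmless post-composition ambiguity), and equivalently the local branches of $F$ converge to those of $F_0$ uniformly on compacta away from the singular set. Because $\gamma=\partial V$ lies in $\Omega$, it avoids the singular set and the $d$ local branches of $F_0$ over $\gamma$ are well separated; consequently the relational preimage $F^{-1}(V)$ varies continuously in the Hausdorff sense near $F_0$. In particular the component $V_F$ coming from the polynomial branch stays close to $V_{F_0}$, so the compact containment $\overline{V_F}\subset V$ persists (giving properness), the restriction stays single-valued onto $V$ (the remaining $d-1$ images staying in $J(V)$), and the degree, being a topological invariant of proper maps, is still $d$. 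Finally, the $d-1$ critical points (counted with multiplicity) of the branch of $F_0$ inside $V_{F_0}$ move continuously and remain inside $V_F$, so Riemann--Hurwitz forces $V_F$ to have Euler characteristic $d\cdot 1-(d-1)=1$, i.e.\ to be a disk. Thus $F|_{V_F}\colon V_F\to V$ is polynomial-like of degree $d$ for every $F$ in a suitable neighborhood $\mathcal{W}_1$ of $F_0$.

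The step I expect to be the main obstacle is the branch bookkeeping for a genuine $d{:}d$ correspondence rather than a single-valued map: one must rule out that a small perturbation lets an extra component of $F^{-1}(V)$ enter $V$, and must keep the polynomial branch single-valued, i.e.\ exactly one of the $d$ images of each $z\in V_F$ lying in $V$. This is exactly where placing $\partial V$ inside the regular set $\Omega$ is essential, since there the branches of $F_0$ are separated and persist under perturbation, while the collar provided by $\overline{V_{F_0}}\subset V$ keeps the moving boundary $\partial V_F$ trapped inside $V$.
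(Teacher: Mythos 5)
Your proposal is correct and follows essentially the same route as the paper: take $V=\phi(D)$, the straightened image of the disk $D$ from the surgery construction (so that $J$-invariance of $\partial V$ and the fixed points come from $\sigma_{r_0}$, and $F_0^{-1}(V)=\phi(D')$ gives the polynomial-like restriction), then invoke openness of the polynomial-like property for nearby $F$. The only difference is one of detail: the paper simply asserts that a small enough neighborhood $\mathcal{W}_1$ works, whereas you spell out the perturbation bookkeeping (separation of branches over $\partial V\subset\Omega$, persistence of the compact containment and of single-valuedness, and the Riemann--Hurwitz count forcing $V_F$ to be a disk), which is a faithful elaboration of what the paper leaves implicit.
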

\begin{proof}

    Let $G$ be the topological mating between $f_{\lambda_0}$ and $r_0$ constructed in Section 3, and $\phi$ be the straightening map --- i.e. $\phi\circ G\circ \phi^{-1} = F_0$. In that construction, we obtain $\overline{\C}$ as the union of two discs $D$ and $\tilde{D}$, glued together along their boundaries, with $G^{-1}(D) = D'$ satisfying that $G|_{D'}: D' \to D$ is conjugate to a polynomial-like map. Setting then $V := \phi(D)$, we have $V_{F_0} = F_0^{-1}(V) = \phi(D')$ and $F_0|_{V_{F_0}}: V_{F_0} \to V$ is a polynomial-like map. This means that, if $\mathcal{W}_1$ is a small enough neighborhood of $F_0$, all restrictions $F|_{V_F}: V_F = F^{-1}(V) \to V$ for $F \in \mathcal{W}_1$ will also be polynomial-like maps. The observation that $\partial V$ is $J$-invariant, containing the fixed points of $J$, comes directly from observing that $J = \phi\circ \sigma_{r_0}\circ \phi^{-1}$, and $\partial D$ is $\sigma_{r_0}$-invariant, and contains the fixed points of $\sigma_{r_0}$.
    
\end{proof}

As a consequence of how the polynomial-like restrictions are obtained, we observe that the family
\[ \{ F|_{V_F}: V_F \to V \}_{F \in \mathcal{W}_1} \]
is an analytic family of polynomial-like maps. The neighborhood $\mathcal{W}_1$ can be thought of as a neighborhood of matings around $F_0$, although the polynomial-like restrictions don't have to necessarily present connected filled Julia sets. We still need to verify that the correspondences in $\mathcal{W}_1$ act as groups outside the two copies of filled Julia sets, which will come as a direct consequence of the following result. To fix notation, we define the annulus $A_F := V\setminus \overline{V_F}$.

\begin{lema}\label{lema.group_structure_stability}

    Let $D \subset \mathcal{W}_1$ be any embedded disk containing $F_0$. Then there exists a holomorphic motion
    \[ \eta: A_{F_0}\times D \to \overline{\C} \]
    of the annulus $A_{F_0}$ along $D$, based on $F_0$, such that $\eta_F(A_{F_0}) = A_F$, and $\eta_F$ conjugates the $d:d$ actions of $J\circ F_0|_{A_{F_0}}: A_{F_0} \to A_{F_0}$ and $J\circ F|_{A_F}: A_F \to A_F$.
    
\end{lema}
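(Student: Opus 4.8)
The plan is to reduce everything to the covering‑correspondence structure that Proposition \ref{prop.classify_matings} provides, and then to build $\eta$ by lifting a holomorphic motion through a cyclic covering. The first observation is that every $F\in\Corr_d^J$ satisfies $J\circ F=\Cov_0^{q_F}$ (since $J^2=\mathrm{id}$), where $q_F$ is a degree $d+1$ rational map with a completely invariant fixed point. As $F$ ranges over the embedded disk $D$, the defining polynomial of the correspondence $J\circ F$ varies holomorphically, and $q_F$ is recovered from $\Cov_0^{q_F}$ up to affine post‑composition; after a fixed normalization (say $q_F$ monic and centered) the map $F\mapsto q_F$ is holomorphic on $D$. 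Thus the action $J\circ F|_{A_F}$ that we must conjugate is precisely the restriction of the deleted covering correspondence $\Cov_0^{q_F}$ to the annulus $A_F=V\setminus\overline{V_F}$.

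The key structural claim is that $q_F|_{A_F}\colon A_F\to\mathcal{A}_F:=q_F(A_F)$ is a \emph{regular} covering of open annuli of degree $d+1$, whose deck group is the cyclic group $\left<\rho_r\right>\cong C_{d+1}$, so that on $A_F$ the correspondence $\Cov^{q_F}$ is exactly the relation ``lying in a common fibre'' and $\Cov_0^{q_F}|_{A_F}$ is the union of the graphs of $\rho_r,\rho_r^2,\dots,\rho_r^d$. For this I would locate the critical points of $q_F$: the completely invariant point (the straightened superattracting point) lies inside $V_F$, and the remaining simple critical point is, by the remark following Proposition \ref{prop.classify_matings}, the point $R$ sitting on the inner boundary $\partial_i A_F$. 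Hence the \emph{open} annulus contains no critical points, and the covering is unbranched. Consequently $\mathcal{A}_F\simeq A_F/\left<\rho_r\right>$ is itself an annulus, and since $q_F$, $\partial V$ and $\partial V_F=F^{-1}(\partial V)$ all vary holomorphically with $F$, the boundary curves of $\mathcal{A}_F$ (parametrized through fundamental arcs on which $q_F$ is injective) give a holomorphic motion of $\partial\mathcal{A}_{F_0}$; extending it by Slodkowski's theorem yields a holomorphic motion $\zeta\colon \overline{\C}\times D\to\overline{\C}$ with $\zeta_F(\mathcal{A}_{F_0})=\mathcal{A}_F$.

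Finally I would lift $\zeta$ through the coverings $q_F$. For $z\in A_{F_0}$ consider the holomorphic path $F\mapsto \zeta_F(q_{F_0}(z))$ in $\mathcal{A}_F$; lifting it through $q_F$ starting from $z$ (the lift is single‑valued because $D$ is simply connected) defines $\eta_F(z)$, characterized by $q_F\circ\eta_F=\zeta_F\circ q_{F_0}$ and $\eta_{F_0}=\mathrm{id}$. This $\eta$ is holomorphic in $F$ and, by uniqueness of path lifting from distinct initial points, injective in $z$, hence a holomorphic motion of $A_{F_0}$ with $\eta_F(A_{F_0})=A_F$. Moreover $z,w$ lie in a common $q_{F_0}$‑fibre if and only if $\eta_F(z),\eta_F(w)$ lie in a common $q_F$‑fibre, so $\eta_F$ conjugates $\Cov^{q_{F_0}}|_{A_{F_0}}$ with $\Cov^{q_F}|_{A_F}$; deleting the diagonal gives exactly the conjugacy between $J\circ F_0|_{A_{F_0}}$ and $J\circ F|_{A_F}$.

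I expect the main obstacle to be the structural step: rigorously identifying the restriction of $\Cov^{q_F}$ to $A_F$ as an unbranched regular $C_{d+1}$‑cover, which requires pinning down the location of both critical points of $q_F$ uniformly over $D$ (in particular controlling the simple critical point $R$ on the inner boundary as $F$ varies) and ensuring the quotient $\mathcal{A}_F$ stays an annulus. Once this covering picture is established, the holomorphic dependence of $q_F$ together with the simple connectivity of $D$ makes the lifting argument routine.
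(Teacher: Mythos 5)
Your argument hinges on what you call the key structural claim --- that $q_F|_{A_F}\colon A_F\to q_F(A_F)$ is an unbranched regular covering of annuli with deck group $C_{d+1}$ --- and that claim is false, because you have misplaced the critical points of $q_F$. The completely invariant (superattracting) point $P_F$ is the straightened image of the fixed point $P$ of $\rho_r$, which is an \emph{interior} point of the fundamental annulus $B$: it is the center of the ``pie'' $\Delta=\Delta'\cup\rho_r(\Delta')\cup\dots\cup\rho_r^d(\Delta')$, untouched by the boundary identifications. Hence $P_F\in A_F$, not in $V_F$ as you assert. Likewise the simple critical point $R_F$ is the image of $R$, which after the identification $R\sim\chi_r(R)$ lies on an interior seam of $B$, not on $\partial_i A_F$. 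So $q_F|_{A_F}$ is a \emph{branched} covering, totally branched at $P_F$ and simply branched at $R_F$. Riemann--Hurwitz then forces the image to be a disk rather than an annulus: $0=\chi(A_F)=(d+1)\chi(q_F(A_F))-d-1$ gives $\chi(q_F(A_F))=1$. Nor is the covering regular: the fiber over $q_F(R_F)$ consists of $d$ points with unequal local degrees, so no deck group can act transitively on it.

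This is not a repairable detail but the collapse of your mechanism: with branch points of $q_{F_0}$ and $q_F$ sitting inside the annuli, the parameter-direction path lifting is neither well defined nor single-valued along the critical fibers, and a motion $\zeta$ of the base obtained by bare Slodkowski extension has no reason to carry the critical values of $q_{F_0}$ to those of $q_F$ --- without which no lift conjugating $\Cov^{q_{F_0}}$ to $\Cov^{q_F}$ can exist. The paper's proof is shaped precisely by this difficulty: it tracks the holomorphically moving marked points $P_F$ and $R_F$ (plus a point $R'$ in the orbit of $R$), uses B\"{o}ttcher coordinates of $q_F$ at the superattracting point $P_F$ to guarantee that the correspondence images of the curve $\gamma$ remain disjoint near $P_F$, exploits the persistence of $R_F$ as a simple critical point to reproduce the branching tree there, and only then applies Slodkowski on fundamental domains and spreads the motion by the dynamics. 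If you want to salvage a quotient-and-lift argument, you would first need a motion of the disk $q_{F_0}(A_{F_0})$ matching the boundary \emph{and} the two critical values, and then a lifting theorem through branched covers with identical branching data --- at which point you are redoing the paper's bookkeeping in different clothing.
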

\begin{proof}

    To construct the holomorphic motion, we will set $\eta(z, F) = z$ for every $z \in \partial V$; this determines the holomorphic motion of $\partial V_{F_0}$ by conjugating dynamics:
    \[ \eta(z, F) := F^{-1}(\eta(F_0(z), w)), \]
    where the inverse is taken to be the branch mapping closest to $z$. In other words, the map $\eta_F = \eta(\cdot, F)$ on $\partial V_{F_0}$ is the lift under the pair of degree $d$ covering maps $F_0|_{\partial V_{F_0}}:\partial V_{F_0} \to \partial V, F|_{\partial V_F}:\partial V_F \to V$ of the map $\eta_F$ on $\partial V$. Now notice that, for all $F \in \Corr_d^J$, the deleted covering correspondence $J\circ F =: \Cov_0^{q_F}$ has a completely invariant point, which must depend holomorphically on $F$. Let us denote by $P_F$ this point. Since $P_{F_0} \in A_{F_0}$, being the image of the fixed point $P$ of $\rho_r$ in its construction, we get $P_F \in A_F$ for all $F \in \mathcal{W}_1$, up to shrinking this neighborhood further. We will also keep track of some other points: the image of the fixed point $S$ of $\chi_r\sigma_r$, which stays fixed as we move $F$ since it is one of the two fixed points of $J$, and belongs to $\partial V$; the image of the fixed point $R$ of $\chi_r\rho_r$, so that $R_{F_0} \in A_{F_0}$ is a simple critical point of $q_{F_0}$ (see the remark after Proposition \ref{prop.classify_matings}), which must then move holomorphically with $F$; and finally we choose some point $R'$ in the $\rho$-orbit of $R$, and since these points are discrete we can make $R'$ move holomorphically, up to again shrinking the size of $\mathcal{W}_1$. This then extends $\eta$ from the boundary of the annulus to now three more points, $P_{F_0}, R_{F_0}$, and $R'_{F_0}$ (see step $1$ in Figure \ref{fig:holomorphic_motion_adaptation}).
    
    Going back to the construction of the ''fundamental'' annulus (see Figure \ref{fig:hecke_fundamental_annulus} again), our goal is to extend the holomorphic motion to one of the curves that makes up the boundary of a true fundamental domain of $\Gamma$. Let us then call $\gamma$ the curve that starts at $P$ and and passes trough $R'$. The corresponding curve on $A_{F_0}$ will be denoted $\gamma_{F_0}$. This extension can be done via Slodkowski \cite{slodkowski-1991}, producing curves $\gamma_F$ for each $F \in D$, but we must ensure that the images under $\Cov_0^{q_F}$ of $\gamma_F$ do not intersect each other (other than at the common point $P_F$) --- otherwise, we will not be able to spread the holomorphic motion by the dynamics. Since the pieces of the images of $\gamma_{F_0}$ under $\Cov_0^{q_{F_0}}$ outside of a neighborhood of $P_{F_0}$ are all at some positive distance from one another, the only problems that may arise are exactly in a neighborhood of $P_F$ (we may always reduce the size of $\mathcal{W}_1$ and therefore of $D$ if necessary). Since $P_F$ is a superattracing fixed point of $q_F$ of local degree $d+1$, we may find locally defined B\"{o}ttcher coordinates that vary analytically with $F$. Let us then fix a small radius $\varepsilon > 0$ such that the inverse B\"{o}ttcher coordinates $\psi_F: D(0, \varepsilon) \to \overline{\C}$ are defined for all $F \in \mathcal{W}_1$. Notice that $\psi_F$ conjugates the local action of $\Cov_0^{q_F}$ with that of $\Cov_0^{z^{d + 1}}$; in particular, if $\psi_{F_0}^{-1}(\gamma_{F_0}) = \gamma_{F_0}'$ (this is only defined for a part of $\gamma_{F_0}$ close enough to $P_{F_0}$), then
    \[ \psi_{F_0}^{-1}(\Cov_0^{q_F}(\gamma_{F_0})) = e^{2\pi i/d}\gamma_{F_0}'\cup \dots \cup e^{2\pi i(d - 1)/d}\gamma_{F_0}'. \]
    We then extend the holomorphic motion $\eta$ by setting $\eta(\psi_{F_0}(t), F) := \psi_F(t)$ for all
    \[ t \in \gamma_{F_0}\cup e^{2\pi i/d}\gamma_{F_0}'\cup \dots \cup e^{2\pi i(d - 1)/d}\gamma_{F_0}'. \]
    (which are the begining piece of $\gamma_{F_0}$ and its images under $\Cov_0^{q_{F_0}}$), and allowing any extension to the rest of the curve $\gamma_{F_0}$, by Slodkowski (see steps 2 and 3 in Figure \ref{fig:holomorphic_motion_adaptation}).

    Finally, there is a $2:2$ branch of $\Cov_0^{q_{F_0}}$ that sends the curve $\gamma_{F_0}$ to a tree branching at $R_{F_0}$. Since $R_F$ is still critical for $q_F$, the same happens for $\gamma_F$: there is a $2:2$ branch of $\Cov_0^{q_F}$ sending $\gamma_F$ to a tree branching at $R_F$. Thus, we have successfully extended $\eta$ to the boundaries of the fundamental domains of $\Cov_0^{q_{F_0}}$. We may then apply Slodkowski again to extend $\eta$ to any such fundamental domains, and the spread by the dynamics to conclude the result (see step 4 in Figure \ref{fig:holomorphic_motion_adaptation}).
    
\end{proof}

\begin{figure}
    \centering
    \includegraphics[scale = 0.55]{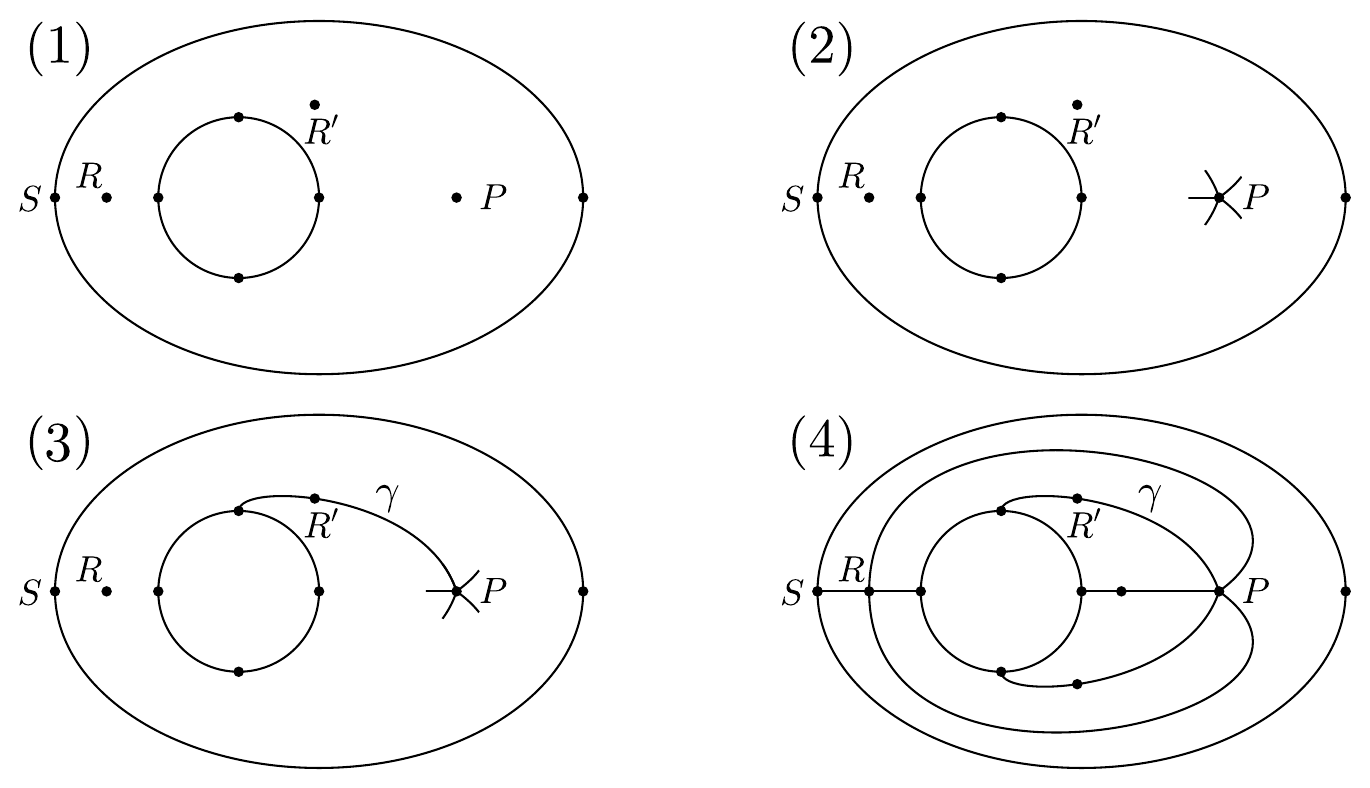}
    \caption{The steps of extending the holomorphic motion in Lemma \ref{lema.group_structure_stability}. Step 1 starts with a holomorphic motion of the boundary of the ''fundamental'' annulus, plus a few dynamically determined points. Step 2 includes small curves around $P$ that are images of each other under powers of $\rho$; they arise from B\"{o}ttcher coordinates. Step 3 extends one of these curves to $\gamma$, whose images will bound fundamental domains for the action of $\rho$. Step 4 has the images of $\gamma$ under powers of $\rho$ drawn, using the fact that $R$ is a simple critical point of the defining rational map to obtain the branching picture.}
    \label{fig:holomorphic_motion_adaptation}
\end{figure}

\textbf{Remark:} Notice that it is necessary that we stay in the family of compositions $J\circ \Cov_0^q$ with $q$ equivalent to a polynomial, since otherwise the fixed point of the order $d + 1$ element would not be present. These can still be matings in a different sense (see the recent \cite{mj-mukherjee-2024}, where Mj and Mukherjee produce correspondences that display an invariant cycle instead of a fixed point).

Since $D$ can be any embedded disk, we see that every correspondence $F \in \mathcal{W}_1$ presents a group structure outside of the filled Julia set of the polynomial-like restriction $F|_{V_F}: V_F \to V$ and its image under $J$. That is to say, we have a standard decomposition into $F$-invariant sets $\overline{\C} = \Lambda_F\cup \Omega_F$, where $\Lambda_F$ is the union of the two aforementioned filled Julia sets, and $\Omega_F/F$ will be biholomorphic to some $\Omega(r)/\Gamma(r)$, $r \in \overset{\circ}{\mathcal{D}}_{d+1}$. We now restrict the neighborhood $\mathcal{W}_1$ to a smaller set where we can hope to find matings with members of the family $\mathbf{f}$. The following is a consequence of the theory developed by McMullen and Sullivan in \cite{sullivan-mcmullen-1998}.

\begin{lema}\label{lema.J_stable_manifold}

    There exists an analytic suborbifold $\mathcal{W}_2 \subseteq \mathcal{W}_1$ containing $F_0$ such that every $F \in \mathcal{W}_2$ has its polynomial-like restriction $F|_{V_F}: V_F \to V$ quasiconformally conjugate to the polynomial-like map $F_0|_{V_{F_0}}: V_{F_0} \to V$.
    
\end{lema}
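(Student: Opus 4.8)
The plan is to realize $\mathcal{W}_2$ as the quasiconformal conjugacy class of $F_0|_{V_{F_0}}$ inside the analytic family of polynomial-like maps $\{F|_{V_F}:V_F\to V\}_{F\in\mathcal{W}_1}$, and to identify this class as an analytic suborbifold by the theory of \cite{sullivan-mcmullen-1998}. First I would verify that $F_0|_{V_{F_0}}$ is $J$-stable in this family. Since $F_0$ is a mating, its polynomial-like restriction is hybrid conjugate to $f_{\lambda_0}$, and $\lambda_0\in\overset{\circ}{M}_\mathbf{f}$ lies in a $J$-stable component; as $J$-stability is a property of the dynamics on the Julia set, $F_0|_{V_{F_0}}$ is structurally stable as a polynomial-like map. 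By the Ma\~n\'e--Sad--Sullivan theory for analytic families of polynomial-like maps (the version cited in Section 2), after shrinking $\mathcal{W}_1$ there is then a holomorphic motion of the Julia set $J_{F_0}$ over $\mathcal{W}_1$ conjugating the actions of the $F|_{V_F}$ on neighborhoods of their Julia sets. This already exhibits every nearby restriction as quasiconformally conjugate to $F_0|_{V_{F_0}}$ along the Julia set.

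The heart of the argument is to promote this to a full quasiconformal conjugacy of the polynomial-like maps and, simultaneously, to show that the locus where this is possible is analytic. Here I would invoke \cite{sullivan-mcmullen-1998} directly: attach to $F_0|_{V_{F_0}}$ the Teichm\"uller space $\operatorname{Teich}(F_0|_{V_{F_0}})$ of its holomorphic dynamical system --- a finite-dimensional complex orbifold whose coordinates are the grand-orbit-invariant Beltrami differentials supported on the complement of the Julia set, modulo the trivial ones. Holomorphic dependence of solutions of the Beltrami equation on parameters (Ahlfors--Bers) makes the associated deformation map $\operatorname{Teich}(F_0|_{V_{F_0}})\to\mathcal{W}_1$ holomorphic, and McMullen--Sullivan identify its image with precisely the quasiconformal conjugacy class of $F_0|_{V_{F_0}}$ within the family. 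Equivalently, on the stable locus there is a holomorphic classifying map to the moduli of the conformal Fatou dynamics, and the conjugacy class is a single fiber. Either description presents this class as an immersed complex-analytic suborbifold $\mathcal{W}_2\subseteq\mathcal{W}_1$ containing $F_0$; the orbifold (rather than manifold) structure absorbs the finite symmetries of the deformation space.

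The main obstacle I expect is the passage from McMullen--Sullivan's setting of rational maps on $\overline{\C}$ to the polynomial-like restrictions $F|_{V_F}:V_F\to V$. One must check that the Teichm\"uller-space machinery and the holomorphicity of the deformation map persist for the analytic family of polynomial-like maps, working with the Douady--Hubbard extension of the theory rather than with Douady--Hubbard straightening itself, since straightening is only continuous and would destroy analyticity. A secondary technical point is that the deformation map has \emph{locally closed} image, so that $\mathcal{W}_2$ is genuinely a suborbifold of $\mathcal{W}_1$ and not merely an injectively immersed set; this I would extract from the properness and rigidity properties recorded in \cite{sullivan-mcmullen-1998}. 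Finally, I would note that since the invariant Beltrami coefficients defining $\operatorname{Teich}(F_0|_{V_{F_0}})$ are supported off the filled Julia set, the resulting conjugacies are automatically $1$-quasiconformal there, so the members of $\mathcal{W}_2$ are in fact hybrid conjugate to $F_0|_{V_{F_0}}$, a slightly stronger conclusion than the stated quasiconformal conjugacy.
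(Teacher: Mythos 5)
Your central step --- presenting $\mathcal{W}_2$ as the quasiconformal conjugacy class of $F_0|_{V_{F_0}}$ inside the analytic family $\{F|_{V_F}: V_F \to V\}_{F\in\mathcal{W}_1}$ and invoking the deformation theory of \cite{sullivan-mcmullen-1998} to exhibit this class as an analytic suborbifold --- is exactly how the paper obtains the lemma; the paper gives no proof beyond this citation, so your second paragraph is the whole content. However, two of your supporting claims are false, and one of them would render the lemma useless for its intended application.

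First, the $J$-stability argument in your opening paragraph does not work: $J$-stability is a property of a map \emph{relative to a family}, not of the map alone, so the fact that $\lambda_0$ lies in a $J$-stable component for the family $\mathbf{f}$ says nothing about stability of $F_0|_{V_{F_0}}$ inside the larger family $\{F|_{V_F}\}_{F\in\mathcal{W}_1}$. The paper's remark at the end of Section 4.2 gives a concrete counterexample: for $\mathbf{f} = \{g_a(z) = z^3 + az^2 + z\}_{a\in\C}$ every map has a parabolic fixed point, which is persistent inside $\mathbf{f}$ but non-persistent under perturbation within $\Corr_d^J$; hence no holomorphic motion of Julia sets over a (shrunk) $\mathcal{W}_1$ can exist, and $\mathcal{W}_2$ is in general a positive-codimension subset of $\mathcal{W}_1$, not an open one. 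Fortunately the McMullen--Sullivan machinery does not require this step, so deleting the paragraph repairs the proof. Second, your closing claim that members of $\mathcal{W}_2$ are automatically \emph{hybrid} conjugate to $F_0|_{V_{F_0}}$ confuses the Julia set with the filled Julia set: the grand-orbit-invariant Beltrami forms of \cite{sullivan-mcmullen-1998} are supported on the complement of the Julia set, which includes the interior of the filled Julia set (attracting basins, Siegel disks), together with possible invariant line fields on the Julia set itself; deforming there yields quasiconformally conjugate restrictions that are not hybrid conjugate. Indeed the lemma would be pointless if your stronger conclusion held: the proof of Theorem \ref{main.analyticity} needs $\mathcal{W}_2$ to contain matings of the various polynomials $f_\lambda$ for $\lambda$ near $\lambda_0$, whereas hybrid conjugacy of all the restrictions (which have connected filled Julia sets) would force the straightening map $\chi$ of Lemma \ref{lema.mating_family_manifold} to be constant, collapsing the set $\Xi$ to a single polynomial class.
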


In other words, the quasiconformality class of the correspondence $F_0$ forms a suborbifold of $\mathcal{W}_1$. Since we are only interested in local results, we may reduce the size of $\mathcal{W}_1$ in order to find a complex manifold $\hat{\mathcal{W}}_2$ and a holomorphic map $\pi: \hat{\mathcal{W}}_2 \to \mathcal{W}_1$ such that $\pi(\hat{\mathcal{W}}_2) = \mathcal{W}_2$. Thus, for each $w \in \hat{\mathcal{W}}_2$, we have associated a correspondence $F_w := \pi(w) \in \mathcal{W}_2$, and the family
\[ \{ F_w|_{V_{F_w}}: V_{F_w} \to V \}_{w\in \hat{\mathcal{W}}_2} \]
is an analytic family of polynomial-like maps. Furthermore, since all of them are quasiconformally conjugate to each other, they all have connected filled Julia set. Lemma \ref{lema.group_structure_stability} thus implies that all of the $F_w$ are matings between some polynomial $f$ quasiconformally conjugate to $f_{\lambda_0}$ and some $r(H_{d+1})$, $r \in \overset{\circ}{\mathcal{D}}_{d+1}$. In general, it is not true that all $\lambda$ close to $\lambda_0$ have $f_\lambda$ quasiconformally conjugate to $f_{\lambda_0}$. Nonetheless, by results in \cite{sullivan-mcmullen-1998}, we know that there is an open dense subset $W'$ of $W$ where every pair $\lambda, \lambda' \in W'$ have that $f_\lambda$ and $f_{\lambda'}$ are quasiconformally conjugate. We may then assume that $\lambda_0 \in W'$, since we know from Theorem \ref{teo.continuity101} that the mating map is continuous, and therefore it is enough to show that it is analytic in an open dense subset of $\overset{\circ}{M}_{\mathbf{f}}\times \overset{\circ}{\mathcal{D}}_{d+1}$.

We now reduce the domain $\hat{\mathcal{W}}_2$ even further to identify the mating with maps from the family $\mathbf{f}$. Let us first fix $w_0 \in \hat{\mathcal{W}}_2$ be such that $\pi(w_0) = F_0$.

\begin{lema}\label{lema.mating_family_manifold}

    There exists an analytic subvariety $\mathcal{W} \subseteq \hat{\mathcal{W}}_2$ containing $w_0$ such that, for all $w \in \mathcal{W}$, the polynomial-like restriction $F_w|_{V_{F_w}}: V_{F_w} \to V$ of the correspondence $F_w = \pi(w)$ is hybrid conjugate to a polynomial $f_\lambda$, for some $\lambda$ close to $\lambda_0$.
    
\end{lema}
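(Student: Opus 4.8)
The plan is to reduce the statement to a matching of \emph{hybrid classes} and then to recognize the matching locus as the preimage of a diagonal under an analytic map. By the straightening theorem of Douady and Hubbard \cite{douady-hubbard-1985}, every polynomial-like restriction $F_w|_{V_{F_w}}\colon V_{F_w}\to V$ (each of which has connected filled Julia set, all being quasiconformally conjugate) is hybrid equivalent to a degree $d$ polynomial, unique up to affine conjugacy; write $\mathcal{P}_d/\mathrm{Aff}$ for the space of such affine-conjugacy classes and let $\chi\colon \hat{\mathcal{W}}_2\to \mathcal{P}_d/\mathrm{Aff}$ be the resulting straightening map. Likewise, working in the $J$-stable component and using $\lambda_0\in W'$, the family $\mathbf{f}$ induces the map $b\colon \lambda\mapsto [f_\lambda]$ into $\mathcal{P}_d/\mathrm{Aff}$, which is analytic because the coefficients of $f_\lambda$ are analytic in $\lambda$. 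Since $F_{w_0}$ is a mating between $f_{\lambda_0}$ and $r_0(H_{d+1})$, its polynomial-like restriction is hybrid conjugate to $f_{\lambda_0}$, so $\chi(w_0)=b(\lambda_0)$. The candidate set is then
\[ \mathcal{W} := \{\, w\in \hat{\mathcal{W}}_2 \ :\ \chi(w)=b(\lambda)\ \text{for some }\lambda\ \text{near}\ \lambda_0 \,\}, \]
and the entire content of the lemma is that this set is an analytic subvariety through $w_0$.

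The crucial --- and hardest --- step is to show that $\chi$ is analytic on $\hat{\mathcal{W}}_2$, since a priori straightening is only continuous. Here I would exploit that every $F_w$ lies in a single quasiconformal conjugacy class. By the McMullen--Sullivan description already used in Lemma \ref{lema.J_stable_manifold} (see \cite{sullivan-mcmullen-1998}), over $\hat{\mathcal{W}}_2$ there is a holomorphically varying family of quasiconformal conjugacies $\Phi_w$ taking $F_{w_0}|_{V_{F_{w_0}}}$ to $F_w|_{V_{F_w}}$, with $\Phi_{w_0}=\mathrm{id}$. Composing $\Phi_w$ with a fixed hybrid conjugacy from $F_{w_0}|_{V_{F_{w_0}}}$ to $f_{\lambda_0}$ and restricting the complex dilatation of the resulting map to $K_{f_{\lambda_0}}$ produces an $f_{\lambda_0}$-invariant Beltrami coefficient $\nu_w$ supported on $K_{f_{\lambda_0}}$, depending analytically on $w$ (a holomorphic family of conjugacies gives a holomorphic family of Beltrami coefficients). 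Extending $\nu_w$ by zero off $K_{f_{\lambda_0}}$ yields an $f_{\lambda_0}$-invariant coefficient on $\overline{\C}$ whose integration straightens $F_w|_{V_{F_w}}$ and identifies $\chi(w)$ as the image of $\nu_w$ under an analytic map from the Banach ball of invariant coefficients into $\mathcal{P}_d/\mathrm{Aff}$; analyticity of this map is the measurable Riemann mapping theorem with analytic parameter dependence, as in \cite{sullivan-mcmullen-1998}. Hence $w\mapsto\chi(w)$ is analytic, being the composition $w\mapsto\nu_w\mapsto\chi(w)$ of analytic maps.

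With both $\chi$ and $b$ analytic, I would realize $\mathcal{W}$ through the analytic subvariety
\[ \mathcal{Z} := \{\, (w,\lambda)\in \hat{\mathcal{W}}_2\times \Lambda_{\mathrm{loc}} \ :\ \chi(w)=b(\lambda)\,\}, \]
the preimage of the diagonal of $(\mathcal{P}_d/\mathrm{Aff})^2$ under the analytic map $(w,\lambda)\mapsto(\chi(w),b(\lambda))$, where $\Lambda_{\mathrm{loc}}$ is a neighborhood of $\lambda_0$; by construction $\mathcal{Z}$ contains $(w_0,\lambda_0)$. The uniqueness clause of the straightening theorem guarantees that $\chi(w)=b(\lambda)$ is exactly the relation ``$F_w|_{V_{F_w}}$ is hybrid conjugate to $f_\lambda$'', since equality of hybrid classes in $\mathcal{P}_d/\mathrm{Aff}$ means hybrid conjugacy up to an affine change of coordinates, which preserves the polynomial form. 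Taking $\mathcal{W}$ to be the analytic set $\chi^{-1}\!\left(b(\Lambda_{\mathrm{loc}})\right)$ --- using that $b(\Lambda_{\mathrm{loc}})$ is, after possibly shrinking and by the proper mapping theorem, an analytic subvariety of $\mathcal{P}_d/\mathrm{Aff}$, and that $\chi$ is analytic --- yields the desired subvariety through $w_0$. The main obstacle throughout is the analyticity of $\chi$; once that is secured, the rest is formal manipulation of preimages of analytic maps together with the Douady--Hubbard uniqueness of hybrid classes.
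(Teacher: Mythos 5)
Your proposal is correct and takes essentially the same route as the paper: both arguments hinge on the analyticity of the straightening map $\chi$ on $\hat{\mathcal{W}}_2$ --- which the paper deduces from the fact that quasiconformal conjugacy forces $\hat{\mathcal{W}}_2$ to be a single $J$-stable component plus a citation of Douady--Hubbard, and which you instead prove directly via holomorphic families of invariant Beltrami coefficients and the parametrized measurable Riemann mapping theorem --- and both then obtain $\mathcal{W}$ as the $\chi$-preimage of the locus in $\Pol_d$ of classes of polynomials belonging to $\mathbf{f}$. Your explicit flagging of the proper mapping theorem needed for $b(\Lambda_{\mathrm{loc}})$ to be a subvariety addresses a point the paper's proof asserts without comment.
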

\begin{proof}

    The family of polynomial-like maps
    \[ \{ F_w|_{V_{F_w}}: V_{F_w} \to V \}_{w\in \hat{\mathcal{W}}_2} \]
    is analytic. Since all of these maps are quasiconformally conjugate, there are no parameters presenting non-persistent indiffreent cycles. In particular, the Mañé-Sad-Sullivan decomposition trivializes: $\hat{\mathcal{W}}_2$ is a single $J$-stable component. Thus, the straightening map $\chi: \hat{\mathcal{W}}_2 \to \Pol_d$, mapping $w$ to the polynomial $\chi(w)$ which is hybridly conjugate to the polynomial-like map $F_w|_{V_{F_w}}: V_{F_w} \to V$, is analytic (see \cite{douady-hubbard-1985}). Since $\mathbf{f}$ is an analytic family of polynomials, the set
    \[ \mathcal{W} := \{ w \in \hat{\mathcal{W}}_2 \ | \ \chi(w) \text{ is conformally conjugate to some } f_\lambda \} \]
    is an analytic subvariety of $\hat{\mathcal{W}}_2$ --- it can be viewed as the pre-image under $\chi$ of the subset of $\Pol_d$ of classes of polynomials in $\mathbf{f}$, which is a subvariety of $\Pol_d$.
    
\end{proof}

We now have all the tools necessary to prove Theorem \ref{main.analyticity}.

\begin{proof}[Proof of Theorem \ref{main.analyticity}]

    Let $\mathcal{W}$ be the subvariety from Lemma \ref{lema.mating_family_manifold}. Up to a desingularization, we can assume it is in fact a manifold, and thus
    \[ \{ F_w|_{V_{F_w}}: V_{F_w} \to V \}_{w\in \mathcal{W}} \]
    is an analytic family of polynomial-like maps. We wish to show that the set
    \[ \Xi := \{ (\lambda, r, w) \in W'\times \overset{\circ}{\mathcal{D}}_{d+1}\times \mathcal{W} \ | \ F_w \text{ is a mating between } f_\lambda \text{ and } r(H_{d+1}) \} \]
    is an analytic subset of $W'\times \overset{\circ}{\mathcal{D}}_{d+1}\times \mathcal{W}$. From Lemma \ref{lema.group_structure_stability}, along any embedded disk containing $w_0$ we may find a holomorphic motion
    \[ \eta: D\times A_{F_0} \to \overline{\C} \]
    of the annulus $A_{F_0}$ along $D$, based on $F_0$, such that $\eta_w(A_{F_0}) = A_{F_w}$, and $\eta_w$ conjugates the $d:d$ actions of $J\circ F_0|_{A_{F_0}}: A_{F_0} \to A_{F_0}$ and $J\circ F_w|_{A_{F_w}}: A_{F_w} \to A_{F_w}$, for all $w \in D$. Therefore, if $\mu_0$ denotes the trivial Beltrami form, the family of Beltrami forms $\{ \eta_w^\ast\mu_0 \}_{w\in D}$ is an analytic family of Beltrami forms on $A_{F_0}$, all of them invariant under the action of $J\circ F_w|_{A_{F_w}}$. Let $B_0$ be the ''fundamental'' annulus constructed for $r_0$ as in Section 2. From the construction of the mating, we may find a holomorphic map $\phi: A_{F_0} \to B_0$ conjugating the actions of $J\circ F_w|_{A_{F_w}}$ and the correspondence on $B_0$ induced from the powers of $\rho_{r_0}$. Thus, the family $\{ \phi^\ast\eta_w^\ast\mu_0 \}_{w\in D}$ is an analytic family of Beltrami forms on $B_0$, all of them invariant under the action of the powers of $\rho_r$. Thus, there is a map $\hat{r}: D \to \overset{\circ}{\mathcal{D}}_{d+1}$ that to $w \in D$ assigns the representaiton $r$ of $H_{d+1}$ obtained from conjugating $r_0$ with the quasconformal map that integrates $\phi^\ast\eta_w^\ast\mu_0$. Since such integrating maps depend analytically on $w$, we see that $\hat{r}$ must be an analytic map.

    We are essentially done with the proof now. If we denote by $[f] \in \Pol_d$ the conjugacy class of a polynomial $f$, and by $\chi: \mathcal{W} \to \Pol_d$ the straightening map from \cite{douady-hubbard-1985}, associating to each polynomial-like restriction of $F_w$ the class of polynomials hybrid equivalent to it, then we see that $F_w$, $w \in \mathcal{W}$, is a mating between $f_\lambda$ and $r(H_{d+1})$ if an only if $\chi(w) = [f_\lambda]$ and $\hat{r}(w) = r$. Since both $\hat{r}$ and $\chi$ are analytic, as is the map $\lambda \in \Lambda \mapsto [f_\lambda] \in \Pol_d$, we conclude that
    \[ \Xi = \{ (\lambda, r, w) \in W'\times \overset{\circ}{\mathcal{D}}_{d+1}\times \mathcal{W} \ | \ \chi(w) = [f_\lambda] \text{ and } \hat{r}(w) = r \} \]
    is an analytic subset of $W'\times \overset{\circ}{\mathcal{D}}_{d+1}\times \mathcal{W}$. Thus, the mating map is analytic on $W'\times \overset{\circ}{\mathcal{D}}_{d+1}$. Since $W'$ is an open dense subset of $W$, the continuity we got in Theorem \ref{teo.continuity101} implies that the mating map is analytic on $W\times \overset{\circ}{\mathcal{D}}_{d+1}$. Since $W$ was taken an arbitrary connected component of $\overset{\circ}{M}_{\mathbf{f}}$, we conclude that the mating map is analytic on $\overset{\circ}{M}_{\mathbf{f}}\times \overset{\circ}{\mathcal{D}}_{d+1}$.
    
\end{proof}

A simple adaptation of the proof above now gives us Corollary \ref{cor.continuity_along_reps}.

\begin{proof}[Proof of Corollary \ref{cor.continuity_along_reps}]

    Simply consider $\Lambda = \{\lambda_0\}$ a singleton and $f_{\lambda_0} = f$.
    
\end{proof}

\textbf{Remark}: The restrictions to further submanifolds/subvarieties/suborbifolds done in the above Lemmas is necessary since we must localize exactly the matings with members of the family $\mathbf{f}$. Indeed, a map in $\mathcal{R}_{\mathbf{f}}$ might not have a $J$-stable neighborhood between all polynomials --- as is the case for any map of the family $\mathbf{f} = \{f_a(z) = z^3 + az^2 + z\}_{a\in \C}$, which all present a parabolic fixed point at $0$. Still $M_\mathbf{f}$ has non-empty interior: allowing the parabolic fixed point to be persistent produces $J$-stable components within the family.

\subsection{Proof of Theorem \ref{main.continuity}}

We will conclude by looking for the conditions under which the mating map is continuous at points in $\partial M_\mathbf{f}$. As was mentioned in the Introduction, this result will hinge on the quasiconformally rigidity of the parameter.

\begin{deft}

    Let $f$ be a polynomial. We say $f$ is \textit{quasiconformally rigid} if, whenever another polynomial $f'$ is quasiconformally conjugate to $f$, then $f'$ is conformally conjugate to $f$.
    
\end{deft}

\begin{proof}[Proof of Theorem \ref{main.continuity}]

    Let $f = (f_\lambda)_{\lambda\in \Lambda}$ be an analytic family of polynomials. Recall that $\mathcal{T}$ is the set of quasiconformally rigid parameters in $\Lambda$, i.e. the parameters $\lambda$ for which, whenever $f$ is  polynomial quasiconformally conjugate to $f_\lambda$, then it is conformally conjugate. We already have continuity on $\overset{\circ}{M}_f$, so we only need to verify the same for points in $\mathcal{T}$. Fixing $\lambda_0 \in \mathcal{T}$ and $r_0 \in \overset{\circ}{\mathcal{D}}_{d+1}$, we will show that any sequence $(\lambda_n, r_n) \xrightarrow{} (\lambda_0, r_0)$ admits a subsequence $n_k \nearrow \infty$ such that the correspondences $F_{\lambda_{n_k}, r_{n_k}}$ converge to $F_{\lambda_0, r_0}$ locally uniformly. Indeed, we see that the Beltrami coefficients $\mu_{r_n}, \mu_{r_0}$ as defined in the proof of Theorem \ref{teo.continuity101} have uniformly bounded norms in $L^\infty$. Also, since the $h_\lambda$ are obtained from a composition of a holomorphic motion along $\lambda$ and a single quasiconformal map, the $h_{\lambda_n}, h_{\lambda_0}$ have uniformly bounded distortion as well. We thus conclude that the Beltrami coefficients $\mu_{\lambda_n, r_n}, \mu_{\lambda_0, r_0}$ have uniformly bounded norms, and thus their integrating maps have uniformly bounded distortion. From the normality property of quasiconformal maps, we find a subsequence $n_k \nearrow \infty$ such that $\phi_{\lambda_{n_k}, r_{n_k}} \xrightarrow{} \tilde{\phi}$ uniformly for some $\tilde{\phi}$ a quasiconformal map. Since the topological correspondences $G_\lambda$ depend continuously on $\lambda$, we also have that $G_{\lambda_{n_k}} \xrightarrow{} G_{\lambda_0}$ uniformly, and therefore
    \[ F_{\lambda_{n_k}, r_{n_k}} = \phi_{\lambda_{n_k}, r_{n_k}}\circ G_{\lambda_{n_k}}\circ \phi_{\lambda_{n_k}, r_{n_k}}^{-1} \xrightarrow{\quad\quad} \tilde{\phi}\circ G_{\lambda_0}\circ \tilde{\phi}^{-1} =: \tilde{F} \]
    uniformly. Since each of the correspondences $F_{\lambda_{n_k}, r_{n_k}}$ are holomorphic, their limit $\tilde{F}$, which is a branched-covering correspondence because it is conjugate to $G_\lambda$, is also holomorphic. Since it is topologically conjugate to $G_{\lambda_0}$, it also admits a polynomial-like restriction, which is quasiconformally conjugate to $f_{\lambda_0}$. Rigidity thus implies that $\tilde{F}$ must be a mating of $f_{\lambda_0}$ with some representation of $H_{d+1}$ --- indeed, the polynomial-like restriction straightens to some polynomial $f'$, which is quasiconformally conjugate to $f_{\lambda_0}$ (because some polynomial-like restriction of it is), and thus is conformally conjugate to $f_{\lambda_0}$. The fact that the representation has to be $r_0$ comes from the continuity along the $r$ parameter observed after the proof of Theorem \ref{teo.continuity101}.
    
\end{proof}

\textbf{Remark}: The rigidity hypothesis is a natural one. In \cite{douady-hubbard-1985}, Douady and Hubbard show (see Propositions 16) that, when straightening polynomial-like maps (which is also a surgery processes), the resulting polynomial doesn't vary continuously with the parameter in general, for degrees greater than $2$. The key for continuity in degree $2$ is rigidity, and still (see Proposition 15 in the same text) the hybrid conjugation itself fails to vary continuously in general.

As an application, we will now prove Corollaries \ref{cor.unicritical_continuity} and \ref{cor.persistent_parabolic_family}. For that, all that is necessary is to verify that the parameters at the boundaries of the connectedness loci of the families $\{ f_c(z) = z^d + c \}_{c \in \C}$ and $\{ g_a(z) = z^3 + az^2 + z \}_{a\in \C}$ are all quasiconformally rigid. We will actually prove this for a class of one-parameter families.

\begin{deft}

    We say $\mathbf{f} = (f_{\lambda})_{\lambda\in \Lambda}$ is a \textit{topologically complete} family of polynomial maps if, whenever $f$ is a polynomial topologically conjugate to some $f_\lambda$, $\lambda \in \Lambda$, then $f$ is conformally conjugate to some $f_\lambda$, $\lambda \in \Lambda$.
    
\end{deft}

We see that both the previously mentioned families have this property: a polynomial $f$ of degree $d$ is topologically conjugate to some $f_c$ if and only if it is unicritical, and in this case it is conformally conjugate to some $f_c$; and a polynomial $f$ of degree $3$ is topologically conjugate to some $g_a$ if and only if it has a parabolic fixed point of multiplier $1$ (recall that the multiplier of a parabolic fixed point is a topological invariant), and in this case it is conformally conjugate to some $g_a$. The corollaries now follow directly from this proposition (which is just Proposition 7 of \cite{douady-hubbard-1985} repeated for this more general case):

\begin{prop}\label{prop.boundary_rigidity}

    Let $\mathbf{f} = (f_\lambda)_{\lambda \in \Lambda}$ be a topologically complete analytic family of polynomial maps, $\Lambda$ a Riemann surface. Then every parameter in $\partial M_\mathbf{f}$ is quasiconformally rigid.
    
\end{prop}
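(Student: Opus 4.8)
The plan is to reproduce the quasiconformal deformation argument of Douady and Hubbard (\cite{douady-hubbard-1985}), using topological completeness as a substitute for the fact that the quadratic family is a global normal form. Fix $\lambda_0 \in \partial M_\mathbf{f}$; as $M_\mathbf{f}$ is closed we have $\lambda_0 \in M_\mathbf{f}$, so $f_{\lambda_0}$ has connected filled Julia set. Let $g$ be a polynomial quasiconformally conjugate to $f_{\lambda_0}$, with conjugacy $\phi$, and let $\mu := \overline{\partial}\phi/\partial\phi$, an $f_{\lambda_0}$-invariant Beltrami coefficient with $k := \|\mu\|_\infty < 1$. For $|t| < 1/k$ I integrate $t\mu$ by the normalized solution $\phi_t$ of the Beltrami equation fixing $\infty$; the parametric measurable Riemann mapping theorem (\cite{ahlfors-1966}) makes $g_t := \phi_t \circ f_{\lambda_0} \circ \phi_t^{-1}$ an analytic family of degree-$d$ polynomials with $g_0 = f_{\lambda_0}$ and $g_1$ conformally conjugate to $g$. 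Each $g_t$ is quasiconformally---hence topologically---conjugate to $f_{\lambda_0}$, so it has connected filled Julia set.

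Next I transport this deformation into the parameter space. Since $\mathbf{f}$ is topologically complete and each $g_t$ is topologically conjugate to the family member $f_{\lambda_0}$, every $g_t$ is conformally conjugate to some $f_\lambda$, $\lambda\in\Lambda$. To organize these realizations holomorphically, I consider the analytic subset $Z := \{(t,\lambda) : f_\lambda \text{ is M\"{o}bius conjugate to } g_t\} \subset \{|t|<1/k\}\times\Lambda$, whose defining relation is analytic because conformal conjugacy of two polynomials is an algebraic condition on their coefficients, and these vary analytically with $(t,\lambda)$. Topological completeness says $Z$ projects onto the $t$-disk, and $(0,\lambda_0)\in Z$. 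Passing to the normalization of an irreducible component of $Z$ through $(0,\lambda_0)$ that dominates the $t$-disk, I obtain a holomorphic arc $h:\D'\to\Lambda$ with $h(0)=\lambda_0$ and $f_{h(t)}$ conformally conjugate to $g_t$.

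It remains to prove $h$ is constant, for then $[g]=[g_1]=[f_{\lambda_0}]$ and $f_{\lambda_0}$ is rigid. Every $f_{h(t)}$ is quasiconformally conjugate to $f_{\lambda_0}$, hence has connected filled Julia set, so $h(\D')\subseteq M_\mathbf{f}$. If $h$ were non-constant, then---$\Lambda$ being a Riemann surface---the open mapping theorem would force $h(\D')$ to be open in $\Lambda$ and to contain $\lambda_0=h(0)$; with $h(\D')\subseteq M_\mathbf{f}$ this would yield $\lambda_0\in\overset{\circ}{M}_\mathbf{f}$, contradicting $\lambda_0\in\partial M_\mathbf{f}$. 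Therefore $h$ is constant and the proof is complete.

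The engine of the contradiction is purely one-dimensional: connectedness of Julia sets pins the deformation inside $M_\mathbf{f}$, while a non-constant holomorphic map into the Riemann surface $\Lambda$ is automatically open, so a non-trivial deformation would drag $\lambda_0$ into $\overset{\circ}{M}_\mathbf{f}$. This is exactly the phenomenon behind Proposition 7 of \cite{douady-hubbard-1985}, and it explains the hypothesis $\dim\Lambda = 1$. I expect the only delicate point to be the construction of the holomorphic arc $h$ through $\lambda_0$: topological completeness yields, for each $t$, some realizing parameter, but guaranteeing that these can be selected holomorphically and starting from $\lambda_0$ (equivalently, that a component of $Z$ through $(0,\lambda_0)$ genuinely dominates the $t$-disk rather than collapsing into the fiber over $t=0$) is where the argument must be pinned down. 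Once the arc is in hand, the remaining steps are routine consequences of the Beltrami equation and of the elementary properties of $M_\mathbf{f}$ recalled in Section 2.
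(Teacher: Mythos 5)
Your strategy is the same as the paper's: integrate $t$ times an invariant Beltrami coefficient to produce a holomorphic disk of polynomials $g_t$ through $f_{\lambda_0}$, use topological completeness to transport this disk into $\Lambda$, and then use one-dimensionality of $\Lambda$ plus the open mapping theorem to force the transported disk to be constant, since its image lies in $M_\mathbf{f}$ while passing through the boundary point $\lambda_0$. Your variant of deforming the full coefficient $\mu$ (the paper instead deforms the truncation of $\mu$ to $K_{\lambda_0}$) is legitimate and even slightly cleaner at the end: for you $g_1$ is conformally conjugate to $g$ by Weyl's lemma, whereas in the paper the map $\phi\circ\phi_1^{-1}$ is only a hybrid conjugacy between $f_{\lambda(1)}$ and $f'$, so its ``clearly'' conceals an appeal to uniqueness of straightening for connected filled Julia sets (Proposition 6 of Douady--Hubbard).

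The gap is the one you flagged yourself, and it is genuine: nothing you have established implies that an irreducible component of $Z$ through $(0,\lambda_0)$ dominates the $t$-disk. Surjectivity of the projection $Z\to \D_{1/k}$ together with $(0,\lambda_0)\in Z$ is compatible with the component through $(0,\lambda_0)$ being the isolated point $\{(0,\lambda_0)\}$, all dominating components missing it: compare the analytic set $\{(0,0)\}\cup\{(t,\lambda)\,:\,t\lambda=1\}$ in $\D\times\C$, whose projection is onto and whose fibers are discrete and nonempty, yet whose component through $(0,0)$ is a point. So the abstract machinery of $Z$ and normalization cannot close this step; some dynamical or family-specific input is required. (Had you such a component $C$, your endgame would actually simplify: your open-mapping argument makes the $\Lambda$-coordinate constant $\equiv\lambda_0$ on $C$, and then irreducibility and closedness force $C=\D_{1/k}\times\{\lambda_0\}$, i.e.\ $g_t$ is conjugate to $f_{\lambda_0}$ for \emph{all} $t$, in particular $t=1$; no section $h$ over the disk is needed, which is good because the normalization of $C$ is not a graph in general.) The paper resolves this point by fiat: it asserts that, after a $t$-dependent affine normalization, the polynomials $g_t$ themselves belong to $\mathbf{f}$ and that the resulting map $t\mapsto\lambda(t)$ with $g_t=f_{\lambda(t)}$, $\lambda(0)=\lambda_0$, is holomorphic. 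That assertion is precisely a local holomorphic section of your $Z$ at $(0,\lambda_0)$; it is easy to justify for the families in the corollaries (unicritical maps and $z^3+az^2+z$, where the normal form is read off holomorphically from the coefficients), but it is the crux of the argument, and your write-up leaves it unproven.
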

\begin{proof}

    Let $\lambda \in \partial M_\mathbf{f}$, and $f'$ be a polynomial such that $f_\lambda$ and $f'$ are quasiconformally conjugate under some quasiconformal map $\phi$. If $K_\lambda$ has zero measure, we have nothing to do (the conjugation is hybrid, and the result follows from uniqueness, since the filled Julia sets are connected). In general, we can set $\nu := \overline{\partial}\phi/\partial\phi$ as the Beltrami coefficient of $\phi$, $\nu_0$ as the Beltrami coefficient
    \[ \nu_0(z) := \begin{cases} \nu(z) & \text{ if } z \in K_\lambda; \\ \mu_0(z) = 0  & \text{ otherwise}; \end{cases} \]
    and $\nu_t := t\nu_0$ for any $t \in D(0, 1/\| \nu_0 \|_\infty)$ --- the disk of center $0$ and radius $1/\| \nu_0 \|_\infty$. We can then find an analytic family of integrating maps $\phi_t$ satisfying that $\phi_t(\infty) = \infty$ and $\phi_t(z)/z \xrightarrow{} 1$ as $z \to \infty$. This means that the maps $\phi_t\circ f_\lambda\circ \phi_t^{-1}$ are all polynomials, and furthermore, since they are topologically conjugate to $f_\lambda$, they all belong to the family $\mathbf{f}$ (under further normalization, if necessary). Thus we have a holomorphic map $t \mapsto \lambda(t)$ such that $\phi_t\circ f_\lambda\circ \phi_t^{-1} = f_{\lambda(t)}$. Notice that $\lambda(t) \in M_\mathbf{f}$ for all $t$, but $\lambda(0) = \lambda \in \partial M_\mathbf{f}$ by hypothesis. Thus the association $t \mapsto \lambda(t)$ is constant (since $\Lambda$ is a complex dimension 1 manifold), and in particular $\lambda(1) = \lambda$. Since clearly $f'$ is conformally conjugate to $f_{\lambda(1)}$, we conclude the result.
    
\end{proof}

\bibliographystyle{plain}
\bibliography{biblio}

\end{document}